\renewcommand{\marginpar}[1]{\relax}
\begin{document}
\title[Local global principle for regular operators]{A local global principle
for regular operators in Hilbert $C^*$--modules}

\author{Jens Kaad}
\address{Hausdorff Center for Mathematics,
Universit\"at Bonn,
Endenicher Allee 60,
53115 Bonn,
Germany}
\email{jenskaad@hotmail.com}

\author{Matthias Lesch}
\address{Mathematisches Institut,
Universit\"at Bonn,
Endenicher Allee 60,
53115 Bonn,
Germany}

\email{ml@matthiaslesch.de, lesch@math.uni-bonn.de}
\urladdr{www.matthiaslesch.de, www.math.uni-bonn.de/people/lesch}

\date{March 8, 2012}

\thanks{Both authors were supported by the 
        Hausdorff Center for Mathematics, Bonn}

\subjclass[2010]{46H25; 46C50, 47C15}
\keywords{Hilbert $C^*$--module, unbounded operator, semiregular and regular
operator}


\begin{abstract}
Hilbert $C^*$--modules are the analogues of Hilbert spaces where a
$C^*$--algebra plays the role of the scalar field. 
With the advent of Kasparov's celebrated $KK$--theory they 
became a standard tool in the theory of operator algebras.

While the elementary properties of Hilbert $C^*$--modules can be derived
basically in parallel to Hilbert space theory the lack of an analogue of the
Projection Theorem soon leads to serious obstructions and difficulties.

In particular the theory of unbounded operators is notoriously more complicated
due to the additional axiom of regularity which is not easy to check.

In this paper we present a new criterion for regularity in terms of the 
Hilbert space localizations of an unbounded operator. We discuss several 
examples which show that the criterion can easily be checked and that it 
leads to nontrivial regularity results.
\end{abstract}

\maketitle
\tableofcontents
\listoffigures

\section{Introduction} 
A Hilbert $C^*$--module $E$ over a $C^*$--algebra $\sA$ is an $\sA$--right
module equipped with an $\sA$--valued inner product $\inn{\cdot,\cdot}$ and
such that $E$ is complete with respect to the norm
$\|x\|:=\|\inn{x,x}^{1/2}\|=\|\inn{x,x}\|^{1/2}$. The notion was introduced by
Kaplansky in the commutative case \cite{Kap:MOA} and in general independently
by Paschke \cite{Pas:IPM}, Rieffel \cite{Rie:IRC} and Takahashi (for the latter
cf. \cite[p. 179]{Rie:IRC} and \cite[p. 364]{Hof:RAC}). Kasparov's celebrated
$KK$--theory makes extensive use of Hilbert $C^*$--modules \cite{Kas:HCM} and
by now Hilbert $C^*$--modules are a standard tool in the theory of operator
algebras. They
are covered in several textbooks, Blackadar \cite[Sec. 13]{Bla:KTO2Ed},
\cite[Sec. II.7]{Bla:OA}, Manuilov-Troitsky \cite{ManTro:HCM}, Raeburn-Williams
\cite{RaeWil:MEC}, Wegge-Olsen \cite[Chap. 15]{Weg:KTC}; our standard reference
will be Lance \cite{Lan:HCM}.

The elementary properties of $C^*$--modules can be derived basically in
parallel to Hilbert space theory. However, there is no analogue of the
Projection Theorem which soon leads to serious obstructions and difficulties.

A Hilbert $C^*$--module $E$ comes with a natural $C^*$--algebra $\B(E)$
of bounded adjointable module endomorphisms. As for Hilbert spaces
one soon needs to consider unbounded adjointable operators,
Baaj-Julg \cite{BaaJul:TBK}, 
Gulja{\v{s}} \cite{Gul:UOH},
Kucerovsky \cite{Kuc:KKP},  Pal \cite{Pal:ROH}, Woronowicz \cite{Wor:UEA};
see also \cite[Chap. 9/10]{Lan:HCM}.

The lack of a Projection Theorem in Hilbert $C^*$--modules causes
the theory of unbounded operators to be notoriously more complicated.
To explain this let us introduce some terminology: following \textsc{Pal} 
\cite{Pal:ROH} by a \emph{semiregular}
operator in a Hilbert $C^*$--module $E$ over $\sA$
we will understand an operator $T:\dom(T)\longrightarrow E$ 
defined on a dense $\sA$--submodule $\dom(T)\subset E$
and such that the adjoint $T^*$ is densely defined, too.
One now easily deduces that $T$ is $\sA$--linear and closable and that
$T^*$ is closed. Besides this semiregular operators can be rather pathologic
(see the discussion in Sec. \plref{ss:SRO} and Sec. \plref{s:NRO}).

To have a reasonable theory (e.g. with a functional calculus for selfadjoint
operators) one has to introduce the additional \emph{axiom of regularity}:
a closed semiregular operator $T$ in $E$ is called \emph{regular} if
$I+T^*T$ is invertible. Regular operators behave more or less as nicely
as closed densely defined operators in Hilbert space. In particular
for selfadjoint regular operators there is a continuous
functional calculus \cite{Baa:MNB}, \cite{Kuc:FCR}, \cite{Wor:UEA}, \cite{WorNap:OTC}. 

While in a \emph{Hilbert space} every densely defined closed operator is regular
in general Hilbert $C^*$--modules there exist closed semiregular operators which
are not regular, see Prop. \plref{p:RegT}.

There is, however, a considerable drawback of the regularity axiom. 
We quote here from \cite[p. 332]{Pal:ROH}:

\begin{quote}
... But when one deals with specific unbounded operators on concrete Hilbert
$C^*$--modules, it is usually extremely difficult to verify the regularity
condition, though the semiregularity conditions are relatively easy to
check. So it would be interesting to find other more easily manageable
conditions that are equivalent to the last condition above. In \cite{Wor:UEA},
Woronowicz gave a criterion based on the graph of an operator for it to be
regular, and to this date, this remains the only attempt in this direction.
\end{quote}

The aim of this paper is to remedy this distressing situation which has
not much improved in the more than 10 years after Pal had written this.
Before going into that let us briefly comment on Woronowicz's work and
explain the criterion mentioned in the previous paragraph.

Woronowicz \cite{Wor:UEA} works in the a priori special situation $E=\sA$,
i.e. $E$ is the $C^*$--algebra $\sA$ viewed as a Hilbert module over
itself. This is not as special as it seems: for a general Hilbert
$C^*$--module $E$ it is shown in \cite[Sec. 3]{Pal:ROH} that there is a one-one
correspondence between (semi)regular operators in $E$ and (semi)regular
operators in the $C^*$--algebra $\sK(E)$ of $\sA$--compact operators.
Nevertheless, we do not quite agree with loc. cit. that this fact
allows ``without any loss in generality" to restrict one--self to
(semi)regular operators on $C^*$--algebras. After all changing the
scalars from $\sA$ to $\sK(E)$ is rather substantial. 

Woronowicz' criterion reads as follows: a closed semiregular
operator $T$ in $E$ is regular if and only if its graph
\[
     \Gamma(T):=\bigsetdef{(x,y)\in E\oplus E}{ x\in\dom(T), y=Tx}.
\]
is complementable. This was proved in \cite{Wor:UEA} for $E=\sA$,
the (straightforward) extension to the general case can be found
in \cite[Theorem 9.3 and Prop. 9.5]{Lan:HCM}.

In practical terms this criterion does not help much. One rather quickly sees
that checking it boils down to solving the equation
$(I+T^*T)x=y$.

Let us describe in non--technical terms the problem from which
this paper arose. In our study of an approach to the $KK$--product
for unbounded modules \cite{KaaLes:SFU} we needed to study two selfadjoint regular
operators $S,T$ in a Hilbert $C^*$--module with ``small" commutator
(Section \plref{s:SSRO}). More precisely, we were looking at unbounded odd
Kasparov modules $(D_1,X)$ and $(D_2,Y)$ together with a densely defined
connection $\nabla$. The operator $S$ then corresponds to $D_1 \ot 1$ whereas $T$
corresponds to $1 \ot_{\nabla} D_2$. The Hilbert $C^*$--module is given by the
interior tensor product of $X$ and $Y$ over some $C^*$--algebra. As an
essential part of forming the unbounded Kasparov product of $(D_1,X)$ and
$(D_2,Y)$ one needs to study the selfadjointness and regularity of the
unbounded product operator
\begin{equation}\label{intro-eq:DefD}
D := \begin{pmatrix}
0 & S - i\, T  \\
S + i\, T & 0 
\end{pmatrix}, \quad  \dom (D) = \big( \dom (S) \cap \dom (T) \big)^2\subset
E\oplus E.
\end{equation}
With some effort we could prove that this operator is selfadjoint but all
efforts to prove regularity using Woronowicz' criterion failed. For a while we
even started to look for counterexamples. On the other hand, in a Hilbert
space regularity comes for free and the construction of $D$ out of $S$ and $T$
was more or less ``functorial".

So stated somewhat vaguely, the following principle should
hold true: given a ``functorial" construction of an operator
$D=D(S,T)$ out of two selfadjoint and regular operators $S,T$.
If then for Hilbert spaces this construction always produces a
selfadjoint operator then $D(S,T)$ is selfadjoint and regular.

More rigorously, let us consider a closed, densely defined and, for simplicity,
symmetric operator $T$ in the Hilbert $C^*$--module $E$. Then for
each state $\go$ on $\sA$ there is a canonical Hilbert space $E^\go$, a natural
map $\iom:E\to E^\go$ with dense range, and a symmetric operator 
$T^\go$ which is defined by closing the operator defined
by $T^\go_0 \iom(x):=\iom(Tx)$. We call $T^\go$ the \emph{localization}
of $T$ with respect to the state $\go$. One of the main results
of this paper is the following Local--Global Principle. For
the sake of brevity, it is stated here for symmetric operators. See
Theorem \plref{t:locglob} for the general case.

\begin{theorem}[Local--Global Principle]\label{intro-t:locglob}
For a closed, densely defined and symmetric operator $T$
the following statements are equivalent:
\begin{thmenum}
\item $T$ is selfadjoint and regular.
\item For every state $\go\in S(\sA)$ the localization $T^\go$ is selfadjoint.
\end{thmenum}
\end{theorem}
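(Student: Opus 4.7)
The plan hinges on the standard characterization that, for a closed densely defined symmetric $T$ in $E$, selfadjointness together with regularity is equivalent to surjectivity of both $T + i$ and $T - i$ onto $E$ (compare \cite[Chap.~9]{Lan:HCM}). With this in hand, both directions reduce to an analysis of $\mathrm{ran}(T \pm i)$ through the localizations.

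For (i)$\Rightarrow$(ii), assume $T$ is selfadjoint and regular, so every $y \in E$ equals $(T \pm i)x$ for some $x \in \dom(T)$. Applying $\iom$ and using the intertwining $\iom \circ T = T_0^\go \circ \iom$, we get $(T_0^\go \pm i)\iom(x) = \iom(y)$. Since $\iom(E)$ is dense in $E^\go$, the range of $T_0^\go \pm i$, and therefore of its closure $T^\go \pm i$, is dense in $E^\go$. A closed symmetric operator in a Hilbert space whose $\pm i$ deficiency subspaces both vanish is selfadjoint, giving (ii).

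For (ii)$\Rightarrow$(i), I split the proof of $\mathrm{ran}(T+i) = E$ (the case $T-i$ is entirely analogous) into a density step and a closed range step. For density, suppose $y \in E$ satisfies $\inn{y,(T+i)x} = 0$ for every $x \in \dom(T)$. For any state $\go \in S(\sA)$,
\[
\inn{\iom(y),(T_0^\go + i)\iom(x)}_{E^\go} = \go\bigl(\inn{y,(T+i)x}\bigr) = 0.
\]
Since $T^\go$ is selfadjoint by hypothesis, $(T^\go + i)$ surjects onto $E^\go$, and because $T^\go$ is the closure of $T_0^\go$ the image of $(T_0^\go + i)$ is already dense in $E^\go$. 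Therefore $\iom(y) = 0$, i.e.\ $\go(\inn{y,y}) = 0$. As states separate the positive cone of $\sA$, this forces $\inn{y,y} = 0$ and hence $y = 0$. For closedness of the range, the symmetry of $T$ yields the pointwise identity
\[
\inn{(T+i)x,(T+i)x} = \inn{Tx,Tx} + \inn{x,x},
\]
since the cross terms $-i\inn{x,Tx}$ and $i\inn{Tx,x}$ cancel. This gives $\|(T+i)x\| \geq \|x\|$, so $T+i$ is bounded below, and combined with the closedness of $T$ this upgrades to closedness of $\mathrm{ran}(T+i)$ in $E$.

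I expect the main subtle point to be the bookkeeping around the localization construction: that $T_0^\go$ is genuinely well-defined on $\iom(\dom(T))$, that the inner-product and orthogonality relations in $E$ descend properly to $E^\go$ via $\go$ as used above, and that $\iom(\dom(T))$ is a core for $T^\go$ so that range density of $T_0^\go + i$ can be cited freely. These items are naturally set up in the paper's earlier sections, so the genuinely new operator-theoretic content is the symmetry identity producing closed range together with the state-separation density argument.
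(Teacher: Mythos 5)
Your direction (1)$\Rightarrow$(2) is correct and is essentially the paper's argument: surjectivity of $T\pm i$ pushes forward through $\iom$ to give dense range of $T^\go_0\pm i$, and a closed symmetric Hilbert space operator with vanishing deficiency spaces is selfadjoint. The closed-range step in the converse is also fine (it is part of Proposition \ref{charregself}).

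The density step in (2)$\Rightarrow$(1), however, has a genuine gap. What you actually prove is that $\ran(T+i)^\perp=\{0\}$ in $E$: you take $y$ with $\inn{y,(T+i)x}=0$ for all $x\in\dom(T)$, localize, and conclude $y=0$. In a Hilbert space this would give density, but in a Hilbert $C^*$--module a closed submodule with trivial orthogonal complement need \emph{not} be dense --- this is exactly the pathology caused by the failure of the Projection Theorem. A concrete instance appears in the paper: $L=\bigsetdef{f\in C[0,1]}{f(0)=0}$ is a proper closed submodule of $E=C[0,1]$ with $L^\perp=\{0\}$. Worse, the selfadjoint nonregular operators $T_\gL$ of Proposition \ref{p:RegT} realize this failure for ranges: since $T_\gL$ is selfadjoint, $\ran(T_\gL+i)^\perp=\ker(T_\gL-i)=\{0\}$, yet $\ran(T_\gL+i)$ is a proper closed submodule. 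So the inference ``orthogonal complement zero, hence dense'' is precisely the step that cannot be made, and no amount of bookkeeping about the localization construction (which is where you anticipated the difficulty) repairs it. What you have correctly established along the way is that $\iom\bigl(\ran(T+i)\bigr)$ is dense in $E^\go$ for \emph{every} state $\go$; the missing ingredient is the Separation Theorem \ref{t:sep}, which is the paper's main technical tool: if $\ran(T+i)$ were a proper closed submodule, there would exist a state $\go$ with $\ov{\iom\bigl(\ran(T+i)\bigr)}\neq E^\go$, contradicting the surjectivity of $T^\go+i$. That theorem requires a nontrivial Hahn--Banach argument (the convexity estimate \eqref{eq:MatrixInequality} showing $0\notin\co{A}$ for $A=\bigsetdef{\inn{y-x_0,y-x_0}}{y\in L}$), and it is the real content separating the $C^*$--module statement from its Hilbert space shadow.
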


The main tool for proving this Theorem is the following separation
Theorem.

\begin{theorem}\label{intro-t:sep} 
Let $L \subset E$ be a closed convex subset of the
Hilbert $C^*$--module $E$ over $\sA$. For each vector $x_0\in E\setminus L$ there
exists a state $\go$ on $\sA$ such that $\iom(x_0)$ is not in the closure of
$\iom(L)$. In particular there exists a state $\go$ such that $\iom(L)$ is
not dense in $E^\go$ and hence $\iom(L)^\perp\not=\{0\}$. 
\end{theorem}

We will show by a couple of examples that the Local--Global Principle
can easily be checked in concrete situations. We would find it
aesthetically more appealing if in Theorems \plref{intro-t:locglob}
and \plref{intro-t:sep} one could replace ``state" by ``pure state".
We conjecture that this is true, but we can only prove it under additional
assumptions on the Hilbert $C^*$--module. That pure states suffice in these
cases turns out to be practically useful in Sections \plref{s:Pure} and in the
discussion of examples of nonregular operators in Section \plref{s:NRO}. We
therefore single out the following Conjecture:
\begin{conjecture}\label{intro-conj}
If $L$ is a proper submodule of the $C^*$--algebra $\sA$ then
there exists a pure state $\go$ on $\sA$ such that
$\iom(L)^\perp\not=\{0\}$.

Consequently, a closed densely defined symmetric operator
in the Hilbert $C^*$--module $E$ over $\sA$ is regular if and
only if for each pure state $\go$ on $\sA$ the localization
$T^\go$ is selfadjoint.
\end{conjecture}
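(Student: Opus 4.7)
The operator-theoretic statement (b) reduces to the first part (a) --- or, more precisely, to its natural extension to submodules of a general Hilbert $\sA$-module $E$, proved by the same technique --- along exactly the lines by which Theorem \plref{intro-t:locglob} is derived from Theorem \plref{intro-t:sep}: if a closed densely defined symmetric operator $T$ fails to be selfadjoint and regular, then one of $\operatorname{ran}(T+i)$, $\operatorname{ran}(T-i)$ is a proper closed submodule of $E$; a pure state $\go$ at which its image in $E^\go$ is not dense forces $\operatorname{ran}(T^\go\pm i)$ to be non-dense and hence $T^\go$ to be non-selfadjoint. The entire difficulty therefore lies in (a), on which I would concentrate.

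Fix a proper submodule $L\subset\sA$ and an element $x_0\in\sA\setminus\overline{L}$. The function
\[
f(\go):=\inf_{a\in L}\go\bigl((x_0-a)^*(x_0-a)\bigr)
\]
is a pointwise infimum of weak$*$-continuous affine functions on the state space $S(\sA)$, so it is concave and upper semicontinuous. By Theorem \plref{intro-t:sep} there is some $\varepsilon>0$ with $F_\varepsilon:=\{\go\in S(\sA):f(\go)\ge\varepsilon\}$ non-empty, and this $F_\varepsilon$ is then automatically weak$*$-closed and convex. The plan is to locate a pure state inside $F_\varepsilon$: adjoin a unit to $\sA$ and work inside the weak$*$-compact state space of the unitization, apply Krein--Milman to $F_\varepsilon$, and argue that an extreme point of $F_\varepsilon$ must be a pure state of $\sA$.

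The main obstacle --- and the reason the statement remains a conjecture --- is precisely this last step. For extreme points of $F_\varepsilon$ to be pure in $S(\sA)$, $F_\varepsilon$ would have to be a \emph{face} of $S(\sA)$; this would be automatic if $f$ were affine, but $f$ is only concave, and a convex decomposition $\go=\lambda\psi_1+(1-\lambda)\psi_2$ inside $F_\varepsilon$ can easily leave both $\psi_i$ outside. An alternative Choquet-theoretic approach --- writing an initial $\go_0\in F_\varepsilon$ as $\int\psi\,d\mu(\psi)$ over pure states and trying to extract a pure $\psi$ with $f(\psi)>0$ --- runs into the wrong-way concavity inequality $f\bigl(\int\psi\,d\mu\bigr)\ge\int f(\psi)\,d\mu$, which supplies no pointwise positivity of $f$ on the support of $\mu$. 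Under structural hypotheses on $\sA$ that guarantee a suitable weak$*$-density of pure states inside $S(\sA)$ --- type $\mathrm{I}$ or real rank zero are the natural cases --- one can trade a fraction of $\varepsilon$ for the approximation and land a pure state in $F_{\varepsilon/2}$; this is presumably how the ``additional assumptions'' results mentioned before the conjecture are obtained. A proof in full generality would require a genuinely non-commutative substitute for the principle that representing measures on a compact convex set see its extreme points.
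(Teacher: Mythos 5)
You are treating the first sentence as the hard, open part, but as literally stated --- $L$ a proper (closed) submodule of $\sA$ itself --- it is a theorem, and the paper proves it (Theorem \plref{t:InvHMPureState}) by a route you have missed entirely: a closed submodule of the Hilbert module $E=\sA$ is a proper closed \emph{right ideal} of $\sA$, and by the classical result \cite[Thm. 2.9.5]{Dix:CA} every such ideal is annihilated by a pure state $\go$, i.e. $\go(a^*a)=0$ for all $a\in L$. Then $\iom(L)=\{0\}$ while $E^\go\neq\{0\}$, so $\iom(L)^\perp\neq\{0\}$. Your Hahn--Banach/Krein--Milman programme (and your correct diagnosis that $F_\varepsilon$ is convex and weak$*$-closed but not a face of $S(\sA)$) is aimed at the general-module version, Conjecture \plref{c:one}, which the paper indeed leaves open; but for the case actually asserted in the first sentence the ideal-theoretic argument settles the matter in three lines, with no need for the function $f$ at all. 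Your guess at the ``additional assumptions'' is also off: the paper's positive cases are commutative $\sA$ with arbitrary $E$ (Theorem \plref{t:Pure}, proved via $\sA$-convexity and a partition of unity on the Gelfand spectrum, not via density of pure states) and $E=\sA$ with arbitrary $\sA$; type I or real rank zero play no role.

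Your reduction of the second sentence to the separation statement is essentially the paper's (Theorem \plref{t:LocGlobPure} together with the proof of Theorem \plref{t:locglob}), and you are right that for a general module $E$ one needs separation for submodules of $E$ rather than of $\sA$; the paper makes the same point in Remark \plref{rem:InvHMPureState}, which is why the ``Consequently'' for general $E$ remains conjectural there too. So the honest status is: first sentence proved (by Dixmier's theorem on pure states annihilating proper closed ideals), second sentence proved for $E=\sA$ and for commutative $\sA$, open in general --- and your proposal, while correctly locating the obstruction in the general case, fails to deliver the one part of the statement that is actually within reach.
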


We close this introduction with a few
remarks about the organization of the paper:

In Section \plref{s:ROL} we collect the necessary background and
notation. In particular (semi)regular operators and their
localizations with respect to representations of the underlying
$C^*$--algebra are introduced. 

In Section \plref{sepcstar} we prove the separation Theorem
\plref{intro-t:sep} and discuss various corner cases which illustrate that
the separation Theorem is not as obvious as it might seem.
As a first application we show that a
submodule $\sE\subset\dom(T)$ is a core for $T$ if and only if for each state
$\go$ the subspace $\iom(\sE)\subset \dom(T^\go)$ is a core for the localized
operator $T^\go$ (Theorem \plref{ss:CCS}).

Section \plref{s:LGP} contains the statement and proof of the
main result of this paper, the Local--Global Principle
characterizing the regularity of a semiregular operator $T$ in terms
of the Hilbert space localizations $T^\go$. The proof uses crucially
the separation Theorem \plref{t:sep} in Section \plref{sepcstar}.
To illustrate the power of the Local--Global Principle we generalize W\"ust's extension
of the Kato--Rellich Theorem to Hilbert $C^*$--modules (Theorem
\plref{t:Wust}).

Section \plref{s:Pure} discusses various aspects of the conjectural refinement
of the Local--Global Principle, Conjecture \plref{intro-conj}.
We prove the conjecture for Hilbert $C^*$--modules over commutative
$C^*$--algebras (Theorem \plref{t:Pure}) as well as for the Hilbert
$C^*$--module $E=\sA$ for any $C^*$--algebra (Theorem
\plref{t:InvHMPureState}). Furthermore, it is shown that for a finitely
generated Hilbert module over a commutative $C^*$--algebra every semiregular
operator is regular (Theorem  \plref{t:ROvectorbundles}); this was earlier
proved by Pal \cite[Sec. 4]{Pal:ROH} for the special module $E=\sA$ for $\sA$
commutative.

In Section \plref{s:NRO} we will recast in a slightly more general context the
known constructions for nonregular operators. Propositions \plref{p:RegT} and
\plref{p:NonRegularFaithfulState} give a precise measure theoretic
characterization for the regularity of a large class of semiregular
operators acting on the Hilbert module $C(X,H)$ ($X$ some compact space
and $H$ some Hilbert space). These results contain the known examples
of nonregular operators as special cases. 

Finally, Section \plref{s:SSRO} contains the regularity result
which was the main motivation to write this paper, as explained above. 
We will study the regularity of sums $S\pm iT$ where $S,T$ are selfadjoint
regular operators in some Hilbert $C^*$--module with a technical
condition on the size of the commutator $[S,T]$. 
We will make crucial use of this result in a subsequent publication on the
unbounded Kasparov product, see \cite{KaaLes:SFU}.

\section*{Acknowledgments}
We would like to thank Ryszard Nest for helpful discussions, in particular for 
sharing with us his insight that the localized Hilbert space with respect to an
arbitrary representation is given as the interior tensor product (cf. Sections
\plref{ss:LHC}, \plref{ss:LSO}). We would also like to thank the referee for some valuable remarks on our exposition. Indeed, the alternative and simpler proof of Proposition \ref{p:RegT} was suggested to us by the referee.

\section{Regular operators and their localizations}\label{s:ROL}  
\subsection{Notations and conventions} \label{ss:NC} 
Script letters $\sA,\sB,\ldots$ denote  
(not necessarily unital) $C^*$--algebras. 
Hilbert $C^*$--modules over a $C^*$--algebra will be denoted by
letters $E,F,\ldots$; $H$ usually denotes a Hilbert space, i.e. a Hilbert $C^*$--module
over $\C$. Recall that a Hilbert $C^*$--module over $\sA$ is an $\sA$--right module 
equipped with an $\sA$--valued inner product $\inn{\cdot,\cdot}$. 
Furthermore, it is assumed that $E$ is complete with respect to the
induced norm $\|x\|:=\|\inn{x,x}^{1/2}\|=\|\inn{x,x}\|^{1/2}$. 

We will adopt the convention that inner products are conjugate \emph{$\sA$--linear}
in the \emph{first} variable and \emph{linear} in the \emph{second}. This convention 
is also adopted for Hilbert spaces. 
We let $\B(E)$ denote the $C^*$--algebra of bounded adjointable operators on $E$.
Our standard reference for Hilbert $C^*$--modules is \textsc{Lance} \cite{Lan:HCM}.
\subsection{Localizations of Hilbert $C^*$--modules, cf. \cite[Chap. 5]{Lan:HCM}}\label{ss:LHC}  
Let $\pi$ be a representation of $\sA$ on the Hilbert space $H_\pi$. We then get an
induced representation $\pi_E$ of $\B(E)$ on the \emph{interior} tensor product
$E\hot_\sA H_\pi$ \cite[Chap. 4]{Lan:HCM}. 
The latter is the Hilbert space obtained as the completion
of the algebraic tensor product $E\ot_\sA H_\pi$ with respect to the inner
product
\begin{equation}\label{eq:108}  
     \inn{x\otimes h, x'\otimes h'} = \inn{h, \pi(\inn{x,x'}_\sA) h'},
\end{equation}
and for $T\in\B(E)$ one has $\pi_E(T)(x\otimes h)=(Tx)\ot h$.
We emphasize that by \cite[Prop. 4.5]{Lan:HCM} the inner product
\eqref{eq:108} on $E\ot_\sA H_\pi$ is indeed positive definite and hence 
$E\ot_\sA H_\pi$ may be viewed as a dense subspace of $E\hot_\sA H_\pi$. 
We call the Hilbert space $E\hot_\sA H_\pi$ the \emph{localization} of $E$ with
respect to the representation $\pi$. If $\pi$ is faithful then so is the
induced representation $\pi_E$ of $\B(E)$. 

For cyclic representations one has a slightly different but equivalent
description of $E\hot_\sA H_\pi$. Namely, let $\go\in S(\sA)$ be a state. Then
one can mimic the GNS construction for $E$ as follows:
$\go$ gives rise to a (possibly degenerate) scalar product
\begin{equation}\label{eq:100}  
        \scalar{x}{y}_\go:= \go(\scalar{x}{y})    
\end{equation}
on $E$. $\sN_{\go}:=\bigsetdef{x\in E}{\scalar{x}{x}_\go=0}$ is a subspace
of $E$. $\scalar{\cdot}{\cdot}_\go$ induces a scalar product on
the quotient $E/\sN_{\go}$ and we denote by $E^\go$ the Hilbert
space completion of $E/\sN_{\go}$.
We let $\iom : E \to E^\go$ denote the natural map. Clearly $\iom$ is
continuous with dense range; it is injective if and only if
$\go$ is faithful.

Now let $(\pi_\go,H_\go,\xi_\go)$ be the cyclic representation of $\sA$ 
with cyclic vector $\xi_\go$ associated with the state $\go$. One then
has $\inn{\xi_\om, \pi_\om(a)\xi_\om} = \om(a)$ for $a \in \sA$.
Furthermore, the map
\begin{equation}\label{eq:109}  
       E^\om \to E \hot_A H_\om, \quad \iom(e) \mapsto e \ot \xi_\om
\end{equation}
is a unitary isomorphism. We will from now on tacitly identify $E^\go$
with $E\hot_\sA H_\go$ and hence identify $\iom(e)$ with $e\ot \xi_\go$ 
where convenient.
\subsection{Semiregular and regular operators}\label{ss:SRO}  

Following \textsc{Pal} \cite{Pal:ROH} by a \emph{semiregular}
operator in $E$ we will understand an operator $T:\dom(T)\longrightarrow E$ 
defined on a dense $\sA$--submodule $\dom(T)\subset E$
and such that the adjoint $T^*$ is densely defined, too.

This definition is the adaption of the notion of a densely
defined closable operator in the Hilbert space setting. 
Pal also requires that $T$ is closable but, as for Hilbert spaces, 
this indeed follows from the other assumptions:

\begin{lemma}\label{l:BasicSemiRegular}
Let $T$ be a semiregular operator in $E$. Then $T$ is $\sA$--linear
and closable. The adjoint $T^*$ is closed and $T^*=(\ov T)^*$. Here $\ov T$ denotes the
closure of $T$.
\end{lemma}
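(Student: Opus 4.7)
The plan is to mirror the classical Hilbert space arguments, the only subtle point being that in a Hilbert $C^*$--module one needs $\dom(T^*)$ dense in order to separate vectors via the inner product. In each step below I will use the following observation: since $\inn{\cdot,\cdot}$ is continuous and $\dom(T^*)\subset E$ is dense, a vector $z\in E$ with $\inn{y,z}=0$ for all $y\in\dom(T^*)$ must vanish (take $y_n\to z$ and use $\|z\|^2=\|\inn{z,z}\|$).

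First, for $\sA$--linearity, let $x\in\dom(T)$ and $a\in\sA$; by assumption $xa\in\dom(T)$. For any $y\in\dom(T^*)$ I compute
\[
 \inn{y, T(xa)-(Tx)a}=\inn{T^*y,xa}-\inn{y,Tx}a=\inn{T^*y,x}a-\inn{T^*y,x}a=0,
\]
using that the inner product is $\sA$--linear in the second variable. The density of $\dom(T^*)$ together with the observation above then forces $T(xa)=(Tx)a$.

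Next, for closability, suppose $x_n\in\dom(T)$ with $x_n\to 0$ and $Tx_n\to y$. For $z\in\dom(T^*)$, continuity of the inner product gives
\[
 \inn{z,y}=\lim_n\inn{z,Tx_n}=\lim_n\inn{T^*z,x_n}=0,
\]
so again density of $\dom(T^*)$ implies $y=0$; hence $T$ is closable, and $\overline T$ exists.

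Closedness of $T^*$ is the same argument with the roles of $T$ and $T^*$ interchanged, using only that $\dom(T)$ is dense: if $y_n\in\dom(T^*)$, $y_n\to y$ and $T^*y_n\to z$, then for every $x\in\dom(T)$
\[
 \inn{y,Tx}=\lim_n\inn{y_n,Tx}=\lim_n\inn{T^*y_n,x}=\inn{z,x},
\]
so $y\in\dom(T^*)$ with $T^*y=z$.

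Finally, for $T^*=(\overline T)^*$: the inclusion $(\overline T)^*\subset T^*$ is immediate from $T\subset\overline T$. Conversely, given $y\in\dom(T^*)$ and $x\in\dom(\overline T)$, pick a sequence $x_n\in\dom(T)$ with $x_n\to x$ and $Tx_n\to \overline T x$; passing to the limit in $\inn{y,Tx_n}=\inn{T^*y,x_n}$ shows $y\in\dom((\overline T)^*)$ with $(\overline T)^*y=T^*y$. The only step that differs materially from the Hilbert space proof is the $\sA$--linearity, and as indicated the main (minor) obstacle there is the separation argument justified by density of $\dom(T^*)$ and continuity of the inner product.
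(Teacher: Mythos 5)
Your proof is correct and follows essentially the same route as the paper's: the paper details only the closability step (the same computation you give, using density of $\dom(T^*)$ to force $y=0$) and states that the remaining claims "follow easily," which is exactly the content you have written out in full. Your separation observation, the $\sA$--linearity computation, and the arguments for closedness of $T^*$ and $T^*=(\ov T)^*$ are all the standard fillings-in the authors had in mind, and they are carried out correctly with the paper's convention of linearity in the second variable.
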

\begin{proof} $\sA$--linearity and closability are simple consequences of
the fact that $T^*$ is densely defined. E.g. let $(x_n)\subset \dom(T)$
be a sequence such that $x_n\to 0$ and $Tx_n\to y$. Then for all
$z\in\dom(T^*)$
\[
   \inn{y,z}=\lim_{n\to\infty} \inn{Tx_n,z}=\lim_{n\to\infty}\inn{x_n,T^*z}=0
\]
and hence $y=0$. This proves that $T$ is closable. The remaining claims
follow easily.
\end{proof}

Besides this one should not take for granted any of the
properties one is used to from unbounded operators in Hilbert space.
Semiregular operators in Hilbert $C^*$--modules can be rather pathologic, see e.g. \cite[Chap. 9]{Lan:HCM} and 
Section \plref{s:NRO} below. We mention as a warning that in general
$\ov{T}\subsetneqq T^{**}$, see Prop. \plref{p:RegT} and the discussion thereafter.


A \emph{closed semiregular} operator $T$ is called \emph{regular} if in addition
$I+T^*T$ has dense range. It then follows that $I+T^*T$ is densely defined
\cite[Lemma 9.1]{Lan:HCM} and invertible. Regular operators behave more or less as nicely
as closed densely defined operators in Hilbert space. In particular
for selfadjoint regular operators there is a continuous
functional calculus \cite{Baa:MNB}, \cite{Kuc:FCR}, \cite{Wor:UEA}, \cite{WorNap:OTC}.

Since the functional calculus will be needed, let us briefly describe it.
Let $C_\infty(\R)$ denote the algebra of continuous functions $f$ on the real
line such that $f$ has limits as $x\to \pm \infty$. This algebra is isomorphic
to the continuous functions on the compact interval $[-1,1]$ via
$C_\infty(\R)\ni f\mapsto \tilde f\in C[-1,1], \tilde f(x):=f(x/\sqrt{1-x^2})$. For a selfadjoint regular operator $T$ the bounded transform $T (I+T^2)^{-1/2}$ is in $\B(E)$
(cf. \cite[Chap. 10]{Lan:HCM}). Putting $f(T):=\tilde f(T(I+T^2)^{-1/2})$ then
yields a $*$--homomorphism $C_\infty(\R)\to \B(E)$ which sends the function
$x\mapsto (1+x^2)\ii$ to $(I+T^2)\ii$ and $x\mapsto x(1+x^2)\ii$ to
$T(I+T^2)\ii$.

While in a \emph{Hilbert space} every densely defined closed operator is regular
in general Hilbert $C^*$--modules there exist closed semiregular operators which
are not regular, see Prop. \plref{p:RegT}.

\begin{lemma}[{cf. \cite[Cor. 9.6]{Lan:HCM}}]\label{l:LanceCorrected}
 Let $T$ be a regular operator. Then $T^*$ is regular, too.
Furthermore $T=T^{**}$.
\end{lemma}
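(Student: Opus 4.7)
The plan is to deduce both assertions from the orthogonal graph decomposition that is available precisely because $T$ is regular. Let $V \in \B(E \oplus E)$ be the operator defined by $V(x,y) := (-y, x)$. A direct check shows that $V$ is unitary with $V^2 = -I$ and $V^* = V^{-1}$. For \emph{any} densely defined operator $T$ a short algebraic computation gives $\Gamma(T)^\perp = V\Gamma(T^*)$ in $E \oplus E$; and Woronowicz's graph complementability criterion, recalled in the Introduction, upgrades this orthogonality statement into the actual orthogonal direct sum decomposition
\[
E \oplus E \;=\; \Gamma(T) \oplus V\Gamma(T^*)
\]
whenever $T$ is regular.

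I would first verify that $T^*$ is regular. By Lemma \plref{l:BasicSemiRegular} the operator $T^*$ is closed. Since $T$ is densely defined and $T \subset T^{**}$, the adjoint $T^{**}$ of $T^*$ is densely defined, so $T^*$ is semiregular. Applying the unitary $V^*$ to the displayed decomposition and using $V^*V = I$ yields
\[
E \oplus E \;=\; V^*\Gamma(T) \oplus \Gamma(T^*),
\]
which exhibits $\Gamma(T^*)$ as orthogonally complemented. Invoking Woronowicz's criterion in the converse direction then gives the regularity of $T^*$.

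Since $T^*$ is now regular, the same decomposition applied to $T^*$ in place of $T$ reads $E \oplus E = \Gamma(T^*) \oplus V\Gamma(T^{**})$, so $\Gamma(T^*)^\perp = V\Gamma(T^{**})$. But from the previous step $\Gamma(T^*)^\perp = V^*\Gamma(T)$. Hence $V^*\Gamma(T) = V\Gamma(T^{**})$; applying $V$ to both sides, using $V^2 = -I$, and observing that graphs are invariant under the sign change $(x,y) \mapsto (-x,-y)$, one concludes $\Gamma(T) = \Gamma(T^{**})$, i.e.\ $T = T^{**}$.

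The whole argument is a formal manipulation with the involutive unitary $V$. The only nontrivial input is Woronowicz's graph complementability criterion, and that is where the real content sits; once it is granted, both claims follow effortlessly. In that sense the main obstacle is entirely offloaded to the previously developed theory, and the lemma itself is essentially a symmetry statement for the graph decomposition.
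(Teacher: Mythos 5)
Your proof is correct. The paper's own ``proof'' is essentially a pointer to Lance: it invokes the arguments preceding \cite[Cor.~9.6]{Lan:HCM} for the regularity of $T^*$ and \cite[Cor.~9.4]{Lan:HCM} for $T=T^{**}$, and those arguments are exactly the orthogonal graph decomposition $E\oplus E=\Gamma(T)\oplus V\Gamma(T^*)$ that you reconstruct, so you have in effect supplied the details the paper outsources, using only the identity $\Gamma(T)^\perp=V\Gamma(T^*)$ and the complementability criterion already recalled in the Introduction.
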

\begin{proof} Lance states this as an if and only if condition. 
As pointed out by Pal \cite[Rem. 2.4 (ii)]{Pal:ROH}
Cor. 9.6 in \cite{Lan:HCM} is not correct as stated. 
Indeed \cite[Prop. 2.2 and 2.3]{Pal:ROH} shows that there 
exists a semiregular nonregular symmetric operator $S$ such
that $S^*$ is selfadjoint and regular. 

\marginpar{TODO: comment 2.(10)}
An inspection of the arguments preceding \cite[Cor. 9.6]{Lan:HCM}
shows that the regularity of $T$ indeed implies the regularity
of $T^*$. Furthermore, then $T=T^{**}$ by \cite[Cor. 9.4]{Lan:HCM}. 

In case of the operator $S$ one can still conclude the
regularity of $S^{**}$. There is no contradiction here, it just
follows that $S^{**}\neq \ov S$.
\end{proof}

Symmetry and selfadjointness are defined as usual as $T\subset T^*$ resp. $T=T^*$. The following reduction of
the regularity problem to selfadjoint operators will be convenient. 

\begin{lemma}\label{l:THat} Let $T$ be a closed and semiregular operator and define 
\begin{equation}\label{eq:THat}  
\hat T :=\mat{0}{T^*}{T}{0}.
\end{equation}
Then $\hat T$ is a closed symmetric operator. Moreover, $T$ is regular if and
only if $\hat T$ is selfadjoint \emph{and} regular.
\end{lemma}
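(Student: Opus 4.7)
The plan is to recognize everything at the level of $2 \times 2$ block operators and to reduce both directions of the equivalence to a single computation of $\hat T^*$ and of $I + \hat T^2$. First, I would dispose of the easy parts: the domain $\dom(T) \oplus \dom(T^*)$ is dense by semiregularity of $T$, closedness of $\hat T$ is inherited from closedness of $T$ and of $T^*$ (Lemma \ref{l:BasicSemiRegular}), and symmetry is immediate from $\inn{Tx}{y} = \inn{x}{T^*y}$.

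Next, I would compute the adjoint. Testing the defining identity for $\hat T^*$ against vectors of the form $(x_1,0)$ and $(0,x_2)$ separately yields
\[
  \hat T^* = \mat{0}{T^*}{T^{**}}{0}
\]
on $\dom(T^{**}) \oplus \dom(T^*)$; consequently, $\hat T$ is selfadjoint precisely when $T = T^{**}$. A parallel computation gives
\[
  I + \hat T^2 = \mat{I+T^*T}{0}{0}{I+TT^*}
\]
on $\dom(T^*T) \oplus \dom(TT^*)$, and any such block diagonal operator is invertible if and only if both diagonal entries are.

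These two identities yield both implications at once. If $T$ is regular, Lemma \ref{l:LanceCorrected} supplies $T = T^{**}$ (so $\hat T$ is selfadjoint) together with regularity of $T^*$ (so both $I + T^*T$ and $I + TT^*$ are invertible), whence $\hat T$ is selfadjoint and regular. Conversely, selfadjointness of $\hat T$ forces $T = T^{**}$, and regularity of $\hat T$ forces invertibility of the block diagonal $I + \hat T^2$, which in turn forces invertibility of $I + T^*T$; since $T$ is closed and semiregular by hypothesis, this means $T$ is regular.

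The main (mild) obstacle is the discrepancy noted just before the lemma, namely that for a closed semiregular operator one can have $\ov T \subsetneq T^{**}$. Accordingly, selfadjointness of $\hat T$ is a genuinely nontrivial condition beyond symmetry, and it is precisely Lemma \ref{l:LanceCorrected} that guarantees regularity of $T$ is strong enough to upgrade $\hat T$ from merely symmetric to selfadjoint.
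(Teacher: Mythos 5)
Your proposal is correct and follows essentially the same route as the paper: both directions rest on the block identity $I+\hat T^*\hat T=\bigl(\begin{smallmatrix}I+T^*T&0\\0&I+T^{**}T^*\end{smallmatrix}\bigr)$ together with Lemma \ref{l:LanceCorrected} supplying $T=T^{**}$ and the regularity of $T^*$ in the forward direction. Your explicit computation of $\hat T^*$ (and the remark that selfadjointness of $\hat T$ is exactly the condition $T=T^{**}$, which can fail for merely closed semiregular $T$) is a welcome elaboration of what the paper leaves implicit, but it is not a different argument.
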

\begin{proof} That $\hat T$ is closed and symmetric is immediate.

If $T$ is regular then by Lemma \plref{l:LanceCorrected} $T^*$ is also
regular and $T^{**}=T$. Thus $\hat T$ is selfadjoint and
\begin{equation}\label{eq:111}  
      I+\hat T^2 =I+\hat T^* \hat T=\mat{I+T^* T}{0}{0}{I+ T T^*} =  \mat{I+T^* T}{0}{0}{I+T^{**} T^*} 
\end{equation}
is invertible.

Conversely, if $\hat T$ is selfadjoint and regular then the first two equalities in \eqref{eq:111}
hold and they show that $T$ is regular.
\end{proof}

For closed operators in Hilbert space
the domain equipped with the graph scalar product is in itself a Hilbert space.
We briefly discuss the analogous construction for a semiregular operator $T$.
For $x,y\in\dom(T)$ put
\begin{equation}\label{eq:116}  
    \inn{x,y}_T:= \inn{x,y}+\inn{Tx,Ty}.
\end{equation}
It is straightforward to check that this turns $\dom(T)$ into a pre--Hilbert $C^*$--module
which is complete if and only if $T$ is a closed operator. Furthermore, the natural
inclusion $\iota_T:\dom(T)\hookrightarrow E$ is a continuous $\sA$--module homomorphism.
Furthermore, we have

\begin{prop}\label{p:GraphHilbertModule}
For a closed semiregular operator $T$ the map 
$\iota_T$ is adjointable if and only if $T$ is regular. In that case
one has $\iota_T^*=(I+T^*T)\ii$, where the latter is viewed as a map
$E\longrightarrow \dom(T)$.
\end{prop}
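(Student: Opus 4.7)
The plan is to verify both implications by a direct computation with the candidate adjoint $S := (I+T^*T)\ii$, viewed as a map $E \to \dom(T)$.

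For the ``if'' direction, assume $T$ is regular. Then $I+T^*T$ is a bijection from $\dom(T^*T)$ onto $E$, and the range of $S$ lies in $\dom(T^*T) \subset \dom(T)$, so $S$ indeed maps into $\dom(T)$. For $x \in \dom(T)$ and $y \in E$ I would expand
\[
  \inn{x, Sy}_T \;=\; \inn{x, Sy} + \inn{Tx, TSy}.
\]
Since $Sy \in \dom(T^*T)$, the vector $TSy$ lies in $\dom(T^*)$ with $T^*TSy = (I+T^*T)Sy - Sy = y - Sy$. Invoking the adjoint relation between $T$ and $T^*$ gives $\inn{Tx,TSy} = \inn{x, y-Sy}$, so $\inn{x, Sy}_T = \inn{x, y} = \inn{\iota_T x, y}$. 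This proves $\iota_T$ is adjointable with $\iota_T^* = S$.

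For the converse, suppose $\iota_T$ is adjointable and set $S := \iota_T^* : E \to \dom(T)$. The defining identity
\[
  \inn{x, y} \;=\; \inn{\iota_T x, y} \;=\; \inn{x, Sy}_T \;=\; \inn{x, Sy} + \inn{Tx, TSy}
\]
rearranges to $\inn{Tx, TSy} = \inn{x, y - Sy}$ for every $x \in \dom(T)$. By definition of $T^*$ this means $TSy \in \dom(T^*)$ with $T^*TSy = y - Sy$, hence $Sy \in \dom(T^*T)$ and $(I+T^*T)Sy = y$. In particular $I+T^*T$ is surjective onto $E$, which is stronger than dense range, so $T$ is regular and the identity $\iota_T^* = (I+T^*T)\ii$ is forced.

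The only delicate bookkeeping is in the converse direction: one must use that the codomain of $\iota_T^*$ is genuinely $\dom(T)$ (with the graph inner product) in order to know a priori that $TSy$ is defined, before one is entitled to read off $TSy \in \dom(T^*)$ from the bilinear identity. Once this is secured the proof is a few lines of manipulation of the graph inner product; no appeal to the separation theorem or the localization machinery is required.
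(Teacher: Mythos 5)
Your argument is correct and is exactly the direct computation with the graph inner product that the paper has in mind when it writes ``we leave the simple proof to the reader, cf.\ also \cite[Chap.~9]{Lan:HCM}''; both directions check out, including the key step in the converse where the identity $\inn{Tx,TSy}=\inn{x,y-Sy}$ for all $x\in\dom(T)$ is correctly read as $TSy\in\dom(T^*)$ with $T^*TSy=y-Sy$, giving surjectivity of $I+T^*T$. No gaps.
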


We leave the simple proof to the reader, cf. also \cite[Chap. 9]{Lan:HCM}.
\subsection{Localizations of semiregular operators} \label{ss:LSO}  

Let $E$ be a Hilbert $C^*$--module over some $C^*$--algebra $\sA$. 
Furthermore, let $\pi$ be a representation of $\sA$ on the Hilbert space $H_\pi$.
The construction of $\pi_E$ in Section \plref{ss:LHC} can be extended
to semiregular operators. Let $T$ be a semiregular operator in $E$.
We define $T_0^\pi$ as unbounded operator in $E\hot_\sA H_\pi$
by\marginpar{TODO 2.(11) or skip} 
\begin{equation}\label{eq:Tpi0}
\dom(T^\pi_0) := \dom(T) \ot_\sA H_\pi,\quad T^\pi_0(x\ot h):= (Tx)\otimes h \in E\hot_\sA H_\pi.
\end{equation}
$T_0^\pi$ is certainly well--defined on the dense $\sA$--submodule $\dom(T)\ot_\sA H_\pi$.
Furthermore, for $x\in\dom(T), y\in\dom(T^*), h_1, h_2\in H_\pi$
\begin{equation}\label{eq:112}  
 \begin{split}
  \binn{ T_0^\pi (x\ot h_1), y\ot h_2 } &
      = \inn{ (Tx)\otimes h_1, y\ot h_2} = \inn{h_1, \pi(\inn{T x,y}) h_2} \\
     &= \inn{h_1, \pi(\inn{x, T^* y}) h_2}= \inn{x\ot h_1, (T^*)_0^\pi(y\ot h_2)}.
 \end{split}
\end{equation}
This shows that the densely defined operator $(T^*)_0^\pi$ is contained in
$(T_0^\pi)^*$. Let us summarize

\begin{lemma}\label{l:LocalizedOperator}
For any representation $(\pi,H_\pi)$ of $\sA$ the operator
$T_0^\pi$ is densely defined and closable. Furthermore, $(T^*)_0^\pi\subset (T_0^\pi)^*$.
We let $T^\pi$ be the closure of $T_0^\pi$ and call it the localization of $T$ with
respect to the representation $(\pi,H_\pi)$. We have $(T^*)^\pi\subset (T^\pi)^*$.

In particular if $T$ is symmetric then the localization $T^\pi$ is symmetric, too.
\end{lemma}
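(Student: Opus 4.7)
The plan is to handle the four claims in sequence, relying on the computation already presented in \eqref{eq:112} together with a density argument for the tensor product.

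First I would verify that $\dom(T)\otimes_\sA H_\pi$ is dense in $E\hot_\sA H_\pi$. For $x\in E$ and $h\in H_\pi$ the defining formula \eqref{eq:108} gives $\|x\ot h\|^2=\inn{h,\pi(\inn{x,x})h}\le \|\inn{x,x}\|\cdot\|h\|^2=\|x\|^2\|h\|^2$. Since $\dom(T)$ is norm-dense in $E$, every elementary tensor $x\ot h$ can be approximated in the tensor norm by $x_n\ot h$ with $x_n\in\dom(T)$. By linearity this shows $\dom(T)\ot_\sA H_\pi$ is dense in $E\ot_\sA H_\pi$, which is dense in $E\hot_\sA H_\pi$ by construction.

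Next, the computation already displayed in \eqref{eq:112} is exactly the statement that $(T^*)_0^\pi\subset (T_0^\pi)^*$: for $y\in\dom(T^*)$ and $h_2\in H_\pi$, the functional $x\ot h_1\mapsto \inn{T_0^\pi(x\ot h_1),y\ot h_2}$ is bounded because it equals $\inn{x\ot h_1,(T^*)_0^\pi(y\ot h_2)}$. Applying the density argument from the previous paragraph to the dense submodule $\dom(T^*)\subset E$ shows that $(T^*)_0^\pi$ is itself densely defined, hence $(T_0^\pi)^*$ has dense domain and therefore $T_0^\pi$ is closable. We then set $T^\pi:=\overline{T_0^\pi}=(T_0^\pi)^{**}$, so that $(T^\pi)^*=(T_0^\pi)^*$.

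Finally, since $(T^\pi)^*$ is closed and contains $(T^*)_0^\pi$, it also contains the closure $(T^*)^\pi$, giving $(T^*)^\pi\subset (T^\pi)^*$. For symmetry, if $T\subset T^*$ then $T_0^\pi\subset (T^*)_0^\pi\subset (T_0^\pi)^*=(T^\pi)^*$; taking the closure on the left yields $T^\pi\subset (T^\pi)^*$. The only step that is not essentially bookkeeping is the tensor-product density argument in the first paragraph, and the main obstacle — if any — is just to ensure that the naive norm estimate there is legitimate, which it is thanks to positivity of $\pi(\inn{x,x})$.
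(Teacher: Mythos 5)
Your proof is correct and follows essentially the same route as the paper: the displayed computation \eqref{eq:112} gives $(T^*)_0^\pi\subset (T_0^\pi)^*$, and the density of $\dom(T^*)$ in $E$ (from semiregularity) makes $(T_0^\pi)^*$ densely defined, whence $T_0^\pi$ is closable; the remaining claims are the same bookkeeping the paper leaves implicit. The explicit norm estimate $\|x\ot h\|\le\|x\|\,\|h\|$ justifying the density of $\dom(T)\ot_\sA H_\pi$ is a worthwhile detail the paper takes for granted.
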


Finally we note that if $(\pi_\go, H_\go, \xi_\go)$ is the cyclic representation
associated to the state $\go$ we write $T_0^\go$ resp. $T^\go$ for the localization
viewed as an operator in $E^\go$. It follows from \eqref{eq:109} that
$\dom(T_0^\go)=\iom\bigl(\dom(T)\bigr)$ and 
$T_0^\go(\iom x)= \iom ( T x ) $.\footnote{Originally we considered only the
localizations $T^\go$ constructed on the Hilbert space $E^\go$ (cf.
\eqref{eq:100}). We are indebted to Ryszard Nest for pointing out to us the
more general construction via the interior tensor product.}
\section{A separation theorem for Hilbert $C^*$--modules}\label{sepcstar} 
In this section we are going to prove the following separation theorem which  
will be the main tool for proving the Local--Global Principle, Theorem
\plref{t:locglob},  for regular operators. 

\begin{theorem}\label{t:sep} 
Let $L \subset E$ be a closed convex subset of the
Hilbert $C^*$--module $E$ over $\sA$. For each vector $x_0\in E\setminus L$ there
exists a state $\go$ on $\sA$ such that $\iom(x_0)$ is not in the closure of
$\iom(L)$. In particular there exists a state $\go$ such that $\iom(L)$ is
  not dense in $E^\go$ and thus, when $L$ is a submodule, $\iom(L)^\perp\not=\{0\}$.
\end{theorem}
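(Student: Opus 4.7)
My plan is to reduce the statement to the existence of a state $\go$ and a $\delta>0$ with
\[
   \go(\inn{x_0-x,x_0-x})\ge \delta \quad\text{for every } x\in L,
\]
since then $\|\iom(x_0)-\iom(x)\|_{E^\go}^2=\go(\inn{x_0-x,x_0-x})\ge \delta$, which keeps $\iom(x_0)$ away from $\ov{\iom(L)}$. To produce such a state I would study the set
\[
   C:=\ov{\mathrm{conv}}\bigsetdef{\inn{x_0-x,x_0-x}}{x\in L}\subset \sA_+,
\]
and try to separate $0$ from $C$ in $\sA_{sa}$.

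The decisive step is showing that $0\notin C$. For this I would use the Hilbert $C^*$--module analogue of the usual Jensen inequality: for $y_1,\dots,y_n\in E$ and coefficients $t_i\ge 0$ with $\sum t_i=1$ one has the identity
\[
   \sum_i t_i \inn{y_i,y_i}-\binn{\textstyle\sum_i t_i y_i,\sum_i t_i y_i}
      =\sum_{i<j} t_i t_j\, \inn{y_i-y_j,\,y_i-y_j}\ \ge 0,
\]
obtained by direct expansion. Applying this with $y_i=x_0-x_i$ and using that $L$ is convex, any convex combination $\sum_i t_i\inn{x_0-x_i,x_0-x_i}$ dominates $\inn{x_0-\bar x,x_0-\bar x}$ with $\bar x:=\sum_i t_i x_i\in L$. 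If $0$ were in $C$, we could find such combinations whose norm tends to zero, producing a sequence in $L$ converging to $x_0$, contradicting that $L$ is closed and $x_0\notin L$.

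With $0\notin C$, I would apply the Hahn--Banach separation theorem in the real Banach space $\sA_{sa}$ to the closed convex set $C$ and the compact set $\{0\}$, obtaining a bounded $\R$--linear functional $\psi$ on $\sA_{sa}$ and an $\alpha>0$ with $\psi(a)\ge \alpha$ for all $a\in C$. Extending $\psi$ to a self--adjoint bounded $\C$--linear functional on $\sA$ and invoking the Jordan decomposition of self--adjoint functionals on a $C^*$--algebra, I write $\psi=\psi_+-\psi_-$ with $\psi_\pm$ positive. For $a\in C\subset \sA_+$ one has $\psi_+(a)\ge \psi_+(a)-\psi_-(a)=\psi(a)\ge \alpha$, so $\psi_+\neq 0$, and $\go:=\psi_+/\|\psi_+\|$ is the desired state.

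Finally, the ``in particular'' assertion follows because if $L$ is a submodule then $\iom(L)$ is a linear subspace of $E^\go$, so it is not dense precisely when its orthogonal complement in the Hilbert space $E^\go$ is nonzero. The main obstacle in the proof is the extraction of a \emph{state} rather than a general continuous functional; the key ingredient that makes this possible is the Jordan decomposition, combined with the Hilbert $C^*$--module Jensen inequality used to show $0\notin C$.
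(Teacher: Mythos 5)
Your proof is correct and follows essentially the same route as the paper: you form the set of inner products $\inn{x_0-x,x_0-x}$, exclude $0$ from its closed convex hull via the convexity estimate (your exact identity with error term $\sum_{i<j}t_it_j\inn{y_i-y_j,y_i-y_j}$ is just the sharpened form of the paper's inequality \eqref{eq:MatrixInequality}), and then extract a state by Hahn--Banach separation in $\sA_{\T{sa}}$ followed by the Jordan decomposition. No gaps; the argument matches the paper's proof of Theorem \plref{t:sep} in all essential respects.
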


\begin{remark} 
We emphasize that even if $L$ is a submodule it is not necessarily complementable.
If it is complementable then the statement
of the Theorem is obvious. Namely, write $x_0=x_0'+x_0''$ with $x_0'\in L$ and
$x_0''\in L^\perp$. $x_0''\not=0$ since $x_0\not \in L$. Furthermore,
$\iom (x_0'') \in \iom(L)^\perp$ for any state and choosing $\go$ such
that $\go(\inn{x_o'',x_0''})\not=0$ we have $\iom (x_0'')\not =0$.
\end{remark}

We mention two more pathologies of Hilbert $C^*$--modules which underline
that the Theorem should not be viewed as obvious.
\subsection{$\iom(L)$ can be dense for faithful $\go$}  
Let $\sA=C[0,1]$, $E=C[0,1]$, 
$L=\bigsetdef{f\in C[0,1]}{f(0)=0}$. $L$ is a closed non--trivial
submodule of $E$. The Lebesgue state $\go(f)=\int_0^1 f(t) dt$
is faithful, $E^\go\simeq L^2[0,1]$ and $\iom(L)$ is dense in $E^\go$.
So even for faithful states, and hence for faithful representations,
it may happen that $\iom(L)^\perp=\{0\}$.
\subsection{Convex hulls of closed subsets of $\sA_+\setminus\{0\}$ may
contain $0$}\label{ss:CHC}  
The proof of Theorem \plref{t:sep} will proceed by applying
the Hahn--Banach Theorem to the convex hull of the set
\begin{equation}\label{eq:102}  
        A:=\bigsetdef{\scalar{y-x_0}{y-x_0}}{ y\in L } \subset \sA_+.
\end{equation}
The closedness of $L$ implies that 
$\inf\bigsetdef{\|a\|}{a\in A} >0.$  
It will be crucial to show that the closure of the convex hull does not contain 
$0$.

We illustrate by example that in general we cannot hope that if $A\subset \sA_+$
with $\inf\bigsetdef{\|a\|}{a\in A}>0$ that then $0\not\in \co{A}$. Namely,
we will construct a subset $A\subset \sA=C[0,1]$ such that
\begin{itemize}
\item $A\subset \sA_+$,
\item $\|a\|\ge 1$ for all $a\in A$,
\item $0\in\co{A}$.
\end{itemize}

For $0<t<1$ and $n\in\Z_+$ such that $0<t-1/n, t+1/n<1$ let (cf. Figure
\ref{fig:one})
\begin{equation}\label{eq:113}  
    f_{t,n}(x):= \begin{cases} 0, & |x - t| \ge 1/n, \\
                               1- n |x-t|, & | x-t | \le 1/n.
                 \end{cases}			       
\end{equation}

\begin{figure}
\begin{tikzpicture}[scale=4]
\draw[->] (-0.2,0) -- (1.2,0);
\draw[->] (0,-0.2) -- (0,1.2);

\draw     (1,0.05) -- (1,-0.05)     node[anchor=north]{1};
\draw     (0.2,0.05) -- (0.2,-0.05) node[anchor=north]{$t-\frac 1n$};
\draw     (0.8,0.05) -- (0.8,-0.05) node[anchor=north]{$t+\frac 1n$};
\draw     (0.5,0.05) -- (0.5,-0.05) node[anchor=north]{$t$};
\draw[dotted]     (0,1) -- (1,1);
\draw[thick] (0,0) -- (0.2,0);
\draw[thick] (0.2,0) -- (0.5,1);
\draw[thick] (0.5,1) -- (0.8,0);
\draw[thick] (0.8,0) -- (1,0);

\node at   (1.2,0.5) {$f_{t,n}$};
\node at   (-0.2,1)  {$1$};
\end{tikzpicture}
\caption{The graph of the function $f_{t,n}$.}\label{fig:one}
\end{figure}
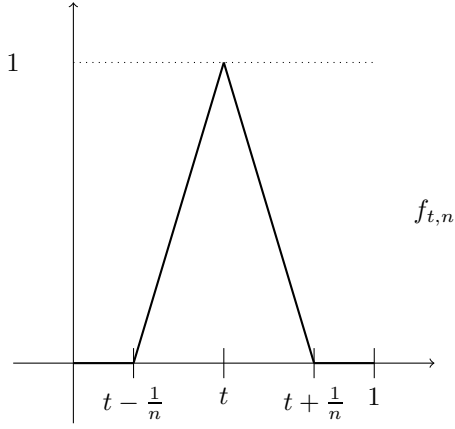

Let $A=\bigsetdef{f_{t,n}}{(t,n)\in\Q\times \Z_+, 0<t-1/n<t<t+1/n<1}$. Then $A$ is
a countable subset of $\sA_+$ and 
\begin{equation}\label{eq:114}  
    \inf\bigsetdef{\|f\|}{f\in A}=1.
\end{equation}

Now let $\eps>0$ be given. Choose a natural number $N>1/\eps$ and 
put $t_j:=j/(N+1), j=1,\ldots,N, n:=2N+2$. Then for $x\in[0,1]$
there is at most one index $j$ with $f_{t_j,n}(x)\not =0$. Hence for the
convex combination $\frac 1N \sum\limits_{j=1}^N f_{t_j,n}$ we have
\begin{equation}\label{eq:115}  
              0 \le \frac 1N \sum\limits_{j=1}^N f_{t_j,n}(x) \le \frac 1N<\eps     
\end{equation}
showing that $0\in \co{A}$, cf. Figure \plref{fig:two}.

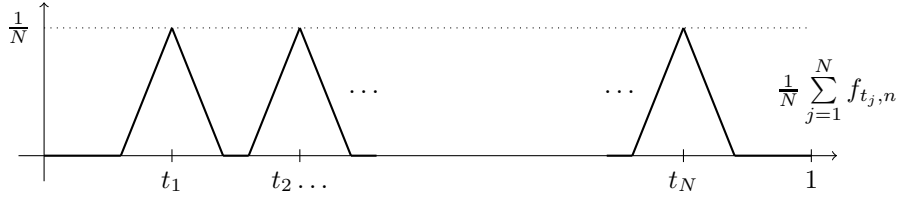
\begin{figure}
\begin{tikzpicture}[scale=1.7]
\draw[->] (-0.2,0) -- (6.2,0);
\draw[->] (0,-0.2) -- (0,1.2);

\draw     (1,0.05) -- (1,-0.05)     node[anchor=north]{$t_1$};
\draw     (2,0.05) -- (2,-0.05)     node[anchor=north]{$t_2\ldots$};
\draw     (5,0.05) -- (5,-0.05)     node[anchor=north]{$t_N$};
\draw     (6,0.05) -- (6,-0.05)     node[anchor=north]{$1$};
\draw[dotted]     (0,1) -- (6,1);
\draw[thick] (0,0) -- (0.6,0);
\draw[thick] (0.6,0) -- (1,1);
\draw[thick] (1,1) -- (1.4,0);
\draw[thick] (1.4,0) -- (1.6,0);
\draw[thick] (1.6,0) -- (2,1);
\draw[thick] (2,1) -- (2.4,0);
\draw[thick] (2.4,0) -- (2.6,0);
\node at (2.5,0.5) {$\mathbf{\ldots}$};
\node at (4.5,0.5) {$\mathbf{\ldots}$};
\draw[thick] (4.4,0) -- (4.6,0);
\draw[thick] (4.6,0) -- (5,1);
\draw[thick] (5,1) -- (5.4,0);
\draw[thick] (5.4,0) -- (6,0);

\node at   (-0.2,1)  {$\frac 1N$};
\node at   (6.2,0.5) {$\frac 1N \sum\limits_{j=1}^N f_{t_j,n}$};
\end{tikzpicture}
\caption{Convex combination of $f_{t_j,n}$ with arbitrarily small norm.}\label{fig:two}

\end{figure}
\subsection{Counterexample for pure states}\label{ss:CPS}
The previous construction  
can also be used to show that in Theorem
\plref{t:sep} ``state'' cannot be replaced by ``pure state''.
Namely, let $\sA=E=C[0,1]$ and let $L$ be the closed convex
hull of the two functions $f_{1/4,5}, f_{3/4,5}$. Then certainly
for each $f\in L$ we have $\|f\|\ge 1/2$,
hence $x_0=0\not\in L$.

Now let $\go$ be a pure state of $\sA$. Then there is $p\in [0,1]$
such that $\go(f)=f(p)$. Let $1\ge\eps>0$ be given. If $p\le 1/2$
then for $f=\eps f_{1/4,5}+(1-\eps)f_{3/4,5}$ we have $\go(\inn{f,f})\le\eps^2$.
If $p\ge 1/2$ then put $f=(1-\eps) f_{1/4,5}+\eps f_{3/4,5}$.
This argument shows that $0=\iom(x_0)$ is in the closure if $\iom(L)$.

\commentary{This example does not really convince me. $L$ is
just a convex set and not a submodule!}
\subsection{Proof of Theorem \plref{t:sep}}
Let now $A$ be the set defined in Eq. \eqref{eq:102}.
Since $L$ is closed we have
\begin{equation}\label{eq:101}  
        \delta:= \inf\bigsetdef{\|y-x_0\|^2}{y\in L} =\inf\bigsetdef{\|a\| }{a\in A} >0.    
\end{equation}
To apply the Hahn--Banach Theorem we need to show that $0\not\in\co{A}$.

To this end we consider arbitrary $y_1,\ldots, y_n\in E$ and real numbers
$\gl_j\ge 0$ with $\gl_1+\ldots+\gl_n=1$. Then (cf. \cite[Lemmas 4.2 and 4.3]{Lan:HCM})
\begin{equation}\label{eq:MatrixInequality}
\begin{split}
    \sum_{k,l=1}^n \gl_k \gl_l \scalar{y_k}{y_l} 
                    & = \sum_{k=1}^n \la_k^2 \inn{y_k,y_k}
                         + \sum_{k < l} \la_k \la_l \big( \inn{y_k,y_l} + \inn{y_l,y_k} \big) \\
& \leq \sum_{k=1}^n \la_k^2 \inn{y_k,y_k}
+ \sum_{k < l} \la_k \la_l \big( \inn{y_k,y_k} + \inn{y_l,y_l} \big) \\
& = \sum_{k=1}^n \la_k \inn{y_k,y_k}.
\end{split}
\end{equation}
Here we have used 
\begin{equation}\label{eq:149}  
\inn{x,y}+\inn{y,x}\le \inn{x,x}+\inn{y,y}
\end{equation}
which can be seen by expanding $\inn{x-y,x-y}\ge 0$.

Consider the convex combination
$\sum\limits_{j=1}^n \gl_j \scalar{y_j-x_0}{y_j-x_0}, y_1,\ldots, y_n\in L$, of elements of $A$.
Using \eqref{eq:MatrixInequality} we find
\begin{equation}\label{eq:106}  
 \begin{split}
  \sum_{j=1}^n &\gl_j \scalar{y_j-x_0}{y_j-x_0}\\
  & = \scalar{x_0}{x_0}-\sum_{j=1}^n \gl_j\bigl(
  \scalar{y_j}{x_0}+\scalar{x_0}{y_j}\bigr)+\sum_{j=1}^n \gl_j \scalar{y_j}{y_j}\\
  & \ge \scalar{x_0}{x_0}-\sum_{j=1}^n \gl_j\bigl(
  \scalar{y_j}{x_0}+\scalar{x_0}{y_j}\bigr)+\sum_{k,l=1}^n \gl_k \gl_l
  \scalar{y_k}{y_l}\\
  &= \scalar{x_0-\sum_{j=1}^n \gl_j y_j}{x_0-\sum_{j=1}^n \gl_j y_j}.
 \end{split}   
\end{equation}
Since $L$ is assumed to be convex, $\sum \gl_j y_j\in L$, hence \eqref{eq:101} and \eqref{eq:106} give
\begin{equation}\label{eq:107}  
  \Bigl\|\sum_{j=1}^n \gl_j \scalar{y_j-x_0}{y_j-x_0}\Bigr\| \ge \delta.
\end{equation}
This shows that each element $b$ in the closure of the convex hull
$\overline{\textup{co}(A)}$ of $A$ satisfies $\|b\|\ge \delta$. This proves
that $0 \notin \co{A}$.

The Hahn--Banach separation theorem now implies the existence of a continuous
linear functional $\varphi : \sA_{\T{sa}} \to \R$ and an $\eps> 0$ such that
$\varphi(b) > \eps$ for all $b \in \ov{\T{co}(A)}$. Here $\sA_{\T{sa}}$ denotes
the real Banach space of selfadjoint elements in the $C^*$--algebra $\sA$. We
extend the linear functional $\varphi$ to a selfadjoint linear functional on
the $C^*$-algebra $\sA$ by defining
\[
\tau : \sA \to \cc,  \quad \tau(x) := \varphi\big( \frac{x+x^*}{2}\big)
+ i \varphi \big( \frac{x - x^*}{2i}\big).
\]
By Jordan decomposition for $C^*$--algebras we can then find two positive
linear functionals $\go_\pm\in \sA_+^*$ such that $\tau =\go_+-\go_-$. Hence
$\go_+(b) \geq \varphi(b) > \eps$ for all $b \in \ov{\T{co}(A)} \su
\sA_+$. Putting $\go=\go_+/\|\go_+\|$ we see that in $E^\go$ the vector
$\iom(x_0)$ and the subspace $\iom(L)$ have distance at least
$\sqrt{\eps/\|\go_+\|}>0$ which
proves the claim. \hfill\qed 

\subsection{Application: A core--criterion for semiregular
operators}\label{ss:CCS} 
\begin{theorem}\label{t:CoreCriterion}
Suppose that $T$ is a closed and semiregular operator in the Hilbert
$\sA$--module $E$. Let $\sE \su \dom(T)$ be a submodule of the domain of
$T$. The following statements are then equivalent:
\begin{thmenum}
\item The submodule $\sE$ is a core for $\dom(T)$.
\item For every representation $(\pi,H_\pi)$ of $\sA$ the subspace
$\sE\ot_\sA H_\pi$ is a core for $T^\pi$. 
\item For every state $\go\in S(\sA)$ the subspace 
$\iom(\sE) \su \dom(T^\om)$ is a core for the localization  $T^\om$.
\end{thmenum}
\end{theorem}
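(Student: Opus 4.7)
The plan is to prove the cycle (i) $\Rightarrow$ (ii) $\Rightarrow$ (iii) $\Rightarrow$ (i). The first two implications follow directly from the construction of $T^\pi$ as the closure of $T_0^\pi$, while (iii) $\Rightarrow$ (i) is the substantial part and proceeds by applying the separation Theorem \plref{t:sep} to $\dom(T)$ viewed as a Hilbert $C^*$-module under the graph inner product $\inn{\cdot,\cdot}_T$ (see Proposition \plref{p:GraphHilbertModule} and the surrounding discussion).

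For (i) $\Rightarrow$ (ii), given $x\in\dom(T)$ and $h\in H_\pi$, approximate $x$ by a sequence $x_n\in\sE$ in the graph norm; the estimate $\|(x_n-x)\otimes h\|^2\le \|h\|^2\|x_n-x\|^2$ applied to both $x_n-x$ and $Tx_n-Tx$ shows that $x_n\otimes h\to x\otimes h$ in the graph norm of $T^\pi$. By linearity $\sE\otimes_\sA H_\pi$ is graph-norm dense in $\dom(T_0^\pi)=\dom(T)\otimes_\sA H_\pi$, which is by construction a core for $T^\pi$. For (ii) $\Rightarrow$ (iii), specialize $\pi=\pi_\go$ and use $\iom(x)=x\otimes\xi_\go$ from \eqref{eq:109}. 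Since $\sE$ is a submodule, $\iom(x\cdot a)=x\otimes\pi_\go(a)\xi_\go$ for $x\in\sE$ and $a\in\sA$; by cyclicity of $\xi_\go$ every simple tensor $x\otimes h\in\sE\otimes_\sA H_\go$ can be graph-norm approximated by elements of $\iom(\sE)$, so $\iom(\sE)$ is graph-norm dense in $\sE\otimes_\sA H_\go$, which by (ii) is dense in $\dom(T^\go)$.

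For the key implication (iii) $\Rightarrow$ (i), suppose for contradiction that $\sE$ is not a core. Its graph-norm closure $\overline{\sE}$ is then a proper closed submodule of the Hilbert $C^*$-module $\dom(T)$, and we pick $x_0\in\dom(T)\setminus\overline{\sE}$. Applying Theorem \plref{t:sep} inside $\dom(T)$ produces a state $\go$ on $\sA$ such that the image of $x_0$ under the natural localization $j^\go:\dom(T)\to\dom(T)^\go$ does not lie in the closure of $j^\go(\overline{\sE})$. The crucial step is the identification of $\dom(T)^\go$ with $\dom(T^\go)$: unfolding \eqref{eq:108} for the graph inner product on $\dom(T)$ gives
\[
\inn{x\otimes h, y\otimes k}_{\dom(T)^\go} = \inn{h,\pi_\go(\inn{x,y})k} + \inn{h,\pi_\go(\inn{Tx,Ty})k},
\]
which is precisely the graph inner product of $x\otimes h, y\otimes k$ as elements of $\dom(T_0^\go)\subset\dom(T^\go)$. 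Hence $j^\go(x)\mapsto\iom(x)$ extends to a unitary isomorphism $\dom(T)^\go\cong\dom(T^\go)$ (with the graph inner product) carrying $j^\go(\sE)$ onto $\iom(\sE)$. By hypothesis (iii) $\iom(\sE)$ is graph-norm dense in $\dom(T^\go)$, hence $j^\go(\sE)\subset j^\go(\overline{\sE})$ is dense in $\dom(T)^\go$, contradicting the choice of $\go$.

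The principal obstacle is this final identification $\dom(T)^\go\cong\dom(T^\go)$: though it amounts to routine bookkeeping with the interior tensor product, it is indispensable, since it is what converts the abstract Hahn--Banach separation delivered by Theorem \plref{t:sep} into the concrete statement about the localized operator $T^\go$ on which (iii) depends.
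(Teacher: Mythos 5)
Your proposal is correct and follows essentially the same route as the paper: the cycle (1)$\Rightarrow$(2)$\Rightarrow$(3)$\Rightarrow$(1), with the key step being the application of the separation Theorem \ref{t:sep} to $\dom(T)$ viewed as a Hilbert $C^*$--module under the graph inner product, combined with the identification $\dom(T)\hot_\sA H_\go\cong\dom(T^\go)$ (the paper's Eq.~\eqref{eq:117}). The only difference is cosmetic: you spell out the cyclicity argument for (2)$\Rightarrow$(3), which the paper dismisses as clear, and you phrase the last implication as a contradiction rather than a contrapositive.
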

\begin{proof} Firstly, the implication $(2)\Rightarrow (3)$ is clear.
Secondly, we note that for any representation
$(\pi, H_\pi)$ the scalar product on $\dom(T)\ot_\sA H_\pi$ (induced by the
graph scalar product on $\dom(T)$) equals the graph scalar product of $T^\pi$.
Namely, for $x,y\in\dom(T), h,h'\in H_\pi$ we have
\begin{equation}\label{eq:117}  
\begin{split}
      \inn{x\ot h, &y\ot h'}_{\dom(T)\ot_\sA H_\pi} 
          = \inn{h, \pi(\inn{x,y}_T) h'}\\    
         & = \inn{h, \pi(\inn{x,y}) h'}+ \inn{h, \pi(\inn{Tx,Ty}) h'} \\
	 & = \inn{x\ot h, y\ot h}_{E\ot_\sA H_\pi} + \inn{T^\pi(x\ot h), T^\pi(y\ot h)}_{E\ot_\sA H_\pi}\\
	 & = \inn{x\ot h, y\ot h'}_{T^\pi}.
\end{split}   
\end{equation}
This shows that $\dom(T)\hot_\sA H_\pi=\dom(T^\pi)$ as Hilbert spaces.

In light of this if $\sE$ is dense in $\dom(T)$ then so
is $\sE \ot_\sA H_\pi$ in $\dom(T^\pi)$ proving $(1)\Rightarrow (2)$.

\subsubsection*{$ \neg (1)\Rightarrow \neg (3)$}  If $\sE$ is not a core for $T$ then
there exists a vector $x_0\in \dom(T)\setminus \ov \sE$. Hence by Theorem \plref{t:sep}
there exists a state $\go$ such that $\iom^T(x_0)=x_0 \ot \xi_\go$ is not in the closure of
$\iom^T(\sE)$. Here $\iom^T$ denotes the natural map $\dom(T)\longrightarrow
\dom(T)^\go$. Thus $\iom(\sE)$ is not a core for $T^\go$.
\end{proof}
\section{The Local--Global Principle} \label{s:LGP} 
Before we prove the main theorem of this section we recall the  
characterization of selfadjoint regular operators in terms of the range of the
operators $T \pm i $, \cite[Lemmas 9.7 and 9.8]{Lan:HCM}:

\begin{prop}\label{charregself}
Let $T$ be a closed, densely defined and symmetric operator in
the Hilbert $C^*$--module $E$ over $\sA$. Then for $\mu\in \R\setminus \{0\}$ the operator  $T\pm i\mu$ 
is injective and has closed range. Furthermore, the following statements are then equivalent:
\begin{thmenum}
\item The unbounded operator $T$ is selfadjoint and regular.
\item There exists $\mu>0$ such that each of the operators $T + i\mu $ and $T - i\mu $ has dense range.
\end{thmenum}
\end{prop}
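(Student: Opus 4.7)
The first statement rests on an elementary computation. For $x \in \dom(T)$, expand
\[
\inn{(T \pm i\mu) x, (T \pm i\mu) x} = \inn{Tx,Tx} \mp i\mu\inn{Tx, x} \pm i\mu\inn{x, Tx} + \mu^2\inn{x,x}.
\]
Symmetry of $T$ forces $\inn{Tx,x} = \inn{x,Tx}$, so the cross terms cancel and one obtains $\inn{(T\pm i\mu)x, (T\pm i\mu)x} = \inn{Tx,Tx} + \mu^2\inn{x,x} \geq \mu^2 \inn{x,x}$, hence $\|(T \pm i\mu)x\| \ge |\mu|\,\|x\|$. This yields injectivity, and if $(T\pm i\mu) x_n$ is Cauchy then so is $x_n$, with limit some $x$; writing $Tx_n = (T\pm i\mu)x_n \mp i\mu x_n$ and using closedness of $T$ places $x$ in $\dom(T)$ and exhibits the limit as $(T\pm i\mu)x$, so the range is closed.

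The implication $(1) \Rightarrow (2)$ is handled via the continuous functional calculus for selfadjoint regular operators: the function $g(x) = (x^2 + \mu^2)^{-1}$ lies in $C_\infty(\R)$, so $g(T) = (T^2 + \mu^2)^{-1} \in \B(E)$. On $\dom(T^2)$ one has $(T+i\mu)(T-i\mu) = T^2 + \mu^2$, hence this composition is surjective onto $E$; in particular each individual factor surjects onto $E$.

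For $(2) \Rightarrow (1)$, combining the assumption with the first part shows that $T\pm i\mu$ are bijective onto $E$. For selfadjointness, I would take $y \in \dom(T^*)$ and use surjectivity of $T - i\mu$ to produce $x \in \dom(T)$ with $(T - i\mu) x = (T^* - i\mu) y$; since $T \subset T^*$ this rewrites as $(T^* - i\mu)(x - y) = 0$. As $T^* - i\mu = (T + i\mu)^*$ and $T + i\mu$ has dense range, $(T + i\mu)^*$ is injective, forcing $y = x \in \dom(T)$ and hence $T = T^*$. For regularity I would rescale to $S := T/\mu$, still selfadjoint, closed and densely defined, with $S \pm i$ bijective onto $E$. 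The composition $(S + i)(S - i) = I + S^2 = I + S^*S$ is then surjective, so $S$ is regular. Applying the continuous functional calculus for the selfadjoint regular $S$ to $h(x) = (1 + \mu^2 x^2)^{-1} \in C_\infty(\R)$ produces the inverse of $I + \mu^2 S^2 = I + T^2 = I + T^*T$ in $\B(E)$, giving regularity of $T$. The only delicate step is the domain bookkeeping for the compositions $(T+i\mu)(T-i\mu)$ and the analogous ones for $S$; the final functional-calculus argument is the unique place where a Hilbert $C^*$--module tool beyond the symmetry identity is really needed.
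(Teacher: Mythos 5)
Your argument is correct and is essentially the standard proof that the paper itself omits: the authors simply cite \cite[Lemmas 9.7 and 9.8]{Lan:HCM} and leave the extension to general $\mu\neq 0$ ``to the reader'', and your lower bound $\inn{(T\pm i\mu)x,(T\pm i\mu)x}\ge\mu^2\inn{x,x}$, the factorization $(T+i\mu)(T-i\mu)=T^2+\mu^2$ on $\dom(T^2)$, and the rescaling $S=T/\mu$ are exactly the intended route. The only step worth writing out in full is the functional-calculus identification $g(T)=(T^2+\mu^2)^{-1}$ (e.g.\ via $g=(1+x^2)^{-1}\cdot\frac{1+x^2}{x^2+\mu^2}$ and the paper's explicit description of $f\mapsto f(T)$), which you have correctly flagged as the one place where a genuine Hilbert $C^*$--module tool enters.
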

It then follows that $T\pm i\mu$ is invertible for all $\mu\in\R\setminus\{0\}$.
In \cite{Lan:HCM} (2) is stated for $\mu=1$. The slight extension to arbitrary nonzero
$\mu$ is proved as in the Hilbert space setting and left to the reader.

We remark that regularity is a consequence of selfadjointness when the Hilbert
$C^*$--module is a Hilbert space. This property and the separation theorem for
Hilbert $C^*$--modules proved in Section \ref{sepcstar} are applied in the
proof of the next theorem.

\begin{theorem}[Local--Global Principle]\label{t:locglob}\indent\par
\textup{1. } For a closed semiregular operator $T$ in a Hilbert $C^*$--module 
the following statements are equivalent:
\begin{thmenum}
\item $T$ is regular.
\item For every representation $(\pi,H_\pi)$ of $\sA$ the localizations
$T^\pi$ and $(T^*)^\pi$ are adjoints of each other, i.e. $(T^*)^\pi =
(T^\pi)^*$.
\item For every state $\go\in S(\sA)$ the localizations
$T^\go$ and $(T^*)^\go$ are adjoints of each other.
\end{thmenum}

\textup{2. } For a closed, densely defined and symmetric operator $T$
the following statements are equivalent:
\begin{thmenum}
\item $T$ is selfadjoint and regular.
\item For every representation $(\pi,H_\pi)$ of $\sA$ the localization $T^\pi$
is selfadjoint.
\item For every state $\go\in S(\sA)$ the localization $T^\go$ is selfadjoint.
\end{thmenum}
\end{theorem}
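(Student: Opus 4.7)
The plan is to prove Part 2 first and deduce Part 1 via the doubling trick of Lemma \ref{l:THat}. Under the canonical identification $(E \oplus E) \hot_\sA H_\pi \cong (E \hot_\sA H_\pi) \oplus (E \hot_\sA H_\pi)$ a direct computation yields $(\hat T)^\pi = \begin{pmatrix} 0 & (T^*)^\pi \\ T^\pi & 0 \end{pmatrix}$, whose selfadjointness is equivalent to $(T^*)^\pi = (T^\pi)^*$ (the companion equality $((T^*)^\pi)^* = T^\pi$ being automatic since closed operators in Hilbert space are their own second adjoints). Combined with Lemma \ref{l:THat}, both equivalences of Part 1 thus reduce to Part 2 applied to $\hat T$.

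For Part 2 the implication (b) $\Rightarrow$ (c) is trivial. For (a) $\Rightarrow$ (b), I would exploit the bounded inverses $(T \pm i)^{-1} \in \B(E)$ provided by Proposition \ref{charregself}: applying $\pi_E$ produces $R_\pm \in \B(E \hot_\sA H_\pi)$ with $R_\pm(x \ot h) = ((T \pm i)^{-1} x) \ot h$ on elementary tensors, and a short calculation on algebraic tensors shows $(T_0^\pi \pm i) R_\pm(y) = y$ there. Hence $T^\pi \pm i$ has dense range, and being symmetric and closed in Hilbert space, $T^\pi$ is selfadjoint by the classical case of Proposition \ref{charregself}.

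The heart of the matter is (c) $\Rightarrow$ (a), which I would carry out in two sub-steps. First, to show $T = T^*$, suppose for contradiction that $(x_0, T^* x_0) \in \Gamma(T^*) \setminus \Gamma(T)$ and apply the separation theorem (Theorem \ref{t:sep}) to the closed submodule $\Gamma(T) \su E \oplus E$ to obtain a state $\omega$ for which $(\iom(x_0), \iom(T^* x_0)) \notin \overline{\iom(\Gamma(T))}$ in $E^\omega \oplus E^\omega$. That closure is exactly $\Gamma(T^\omega)$ (it is the graph closure of $T_0^\omega$), and since $(T^*)^\omega \su (T^\omega)^* = T^\omega$ by Lemma \ref{l:LocalizedOperator} and the selfadjointness hypothesis, one has $(\iom(x_0), (T^*)_0^\omega \iom(x_0)) \in \Gamma(T^\omega)$, contradiction. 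Second, with $T$ now closed, densely defined, and symmetric, Proposition \ref{charregself} reduces regularity to the density of $(T \pm i)\dom(T)$ in $E$; if $(T+i)\dom(T)$ failed to be dense, the separation theorem would furnish a state $\omega$ with $(T_0^\omega + i)\dom(T_0^\omega) = \iom((T+i)\dom(T))$ not dense in $E^\omega$, but surjectivity of $T^\omega + i$ on the selfadjoint Hilbert space operator $T^\omega$, combined with the fact that $\dom(T_0^\omega)$ is a core for $T^\omega$, forces this range to be dense --- a contradiction. The same argument handles $T - i$.

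The main obstacle I foresee is sub-step (i), which hinges on the separation theorem translating a module-theoretic non-containment of graphs into a Hilbert space non-approximation statement; all of the delicate bookkeeping among $\Gamma(T^\omega)$, $\overline{\iom(\Gamma(T))}$, $(T^*)^\omega$, and $(T^\omega)^*$ must be set up so that the contradiction falls out cleanly. Once that translation is in place, the remaining Hilbert space manipulations and the Part 1 to Part 2 reduction via $\hat T$ are essentially formal.
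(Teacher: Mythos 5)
Your proposal is correct and follows essentially the same route as the paper: reduction to the symmetric case via $\hat T$, the implication (1)$\Rightarrow$(2) by pushing forward the bounded inverses $(T\pm i)^{-1}$ to the localization, and (3)$\Rightarrow$(1) by applying the separation theorem to the range of $T+i$ and identifying $\overline{\ran(T_0^\omega+i)}$ with $\ran(T^\omega+i)$. The only difference is your first sub-step separately establishing $T=T^*$ via separating the graph; the paper omits this (harmless but redundant) step because Proposition \ref{charregself} already delivers selfadjointness and regularity simultaneously once $T\pm i$ are shown to have dense range.
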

\begin{remark} We note that under 1.(2) the identity $(T^*)^\pi =(T^\pi)^*$
 implies $T^\pi=\bigl((T^*)^\pi\bigr)^*$ since $T^\pi$ is a closed operator in
 a Hilbert space and therefore $T^\pi = (T^\pi)^{**}$.
\end{remark} 
\begin{proof}
In light of Lemma \plref{l:THat} it suffices to prove 2. The implication $(2)\Rightarrow (3)$
is obvious. 

\subsubsection*{$(1) \Rightarrow (2)$}
Assume that $T$ is selfadjoint and regular and let $(\pi, H_\pi)$ be a
representation of $\sA$. By Proposition \ref{charregself} we only need to prove that $T^\pi
+ i$ and $T^\pi -i$ have dense range. W.l.o.g. consider $T^\pi + i$. 
Since $E\ot_\sA H_\pi$ is dense in $E^\pi$ and by linearity it suffices to
show that $x\ot h\in \ran(T^\pi+i)$ for $x\in E$ and $h\in H_\pi$. Since $T$ is
selfadjoint and regular $T + i$ is surjective and hence $y:=(T+i)\ii x\in\dom(T)$
exists. Then $(T^\pi+i)(y\ot h)=x\ot h$.

\subsubsection*{$(3)\Rightarrow (1)$}
Next we prove that the selfadjointness of all the localized operators imply
the selfadjointness and regularity of the global operator.

Thus assume that the localized operator $T^\om$ is selfadjoint for each state
$\om \in S(A)$. Assume by contradiction that the range
of $T + i$ is not dense in $E$. By Proposition \plref{charregself} the range $\ran(T + i)$ is a
proper closed submodule of $E$. By Theorem \ref{t:sep} there exists 
a state $\om \in S(\sA)$ such that
\begin{equation}\label{eq:153}  
\ov{i_\om\big( \ran(T+i) \big)} \neq E^\om.
\end{equation}
However, we also have the identities of subspaces
\[
\ov{i_\om\big( \ran(T+i) \big)} 
= \ov{\ran(T^\om_0 + i)}
= \ran(T^\om + i).
\]
Thus $T^\om + i$ does not have dense range which is in contradiction with the
selfadjointness of $T^\om$. The same argument shows that the operator $T - i$ has dense range as well
and the Theorem is proved.
\end{proof}

\begin{remark}
The PhD-thesis of Baaj \cite{Baa:MNB} seems to be the earliest detailed treatment of regular operators, though only for the special case where the Hilbert $C^*$-module is the $C^*$-algebra itself. This work contains the functional calculus as well as both of the implications $(1)\Rightarrow(2)$ in Theorem \ref{t:locglob}.
\end{remark}
\subsection{Application: W\"ust's extension of the Kato--Rellich Theorem} \label{ss:WKRT}   
The Kato--Rellich Theorem \cite[Theorem X.12]{ReeSim:MMMII} extends to
Hilbert $C^*$--modules without any difficulty.

\begin{theorem}[Kato--Rellich]
Let $T : \dom(T) \to E$ be a selfadjoint regular operator and let $V : \dom(V)
\to E$ be a symmetric operator such that $\dom(T) \su \dom(V)$. Suppose that $V$
is relatively $T$--bounded with relative bound $<1$. That is there exist $a\in (0,1), b\in \R_+$
such that for $x\in\dom(T)$
\[
     \|Vx\| \le a \| T x \| + b \| x \|.
\]
Then $T+V$ with domain $\dom(T)$ is selfadjoint and regular.
\end{theorem}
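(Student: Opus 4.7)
The plan is to adapt the classical Hilbert-space proof of Kato--Rellich, employing the range criterion for selfadjoint regular operators from Proposition \ref{charregself} together with the continuous functional calculus for selfadjoint regular operators recalled in Section \ref{ss:SRO}. First I would verify that $T+V$, with $\dom(T+V):=\dom(T)$, is a closed, densely defined, symmetric operator; symmetry and density are immediate, while closedness follows from the rearranged relative bound
\[
(1-a)\,\|Tx_n - Tx_m\| \le \|(T+V)(x_n - x_m)\| + b\,\|x_n - x_m\|,
\]
which shows that every $(T+V)$--Cauchy sequence in $\dom(T)$ is also $T$--Cauchy, so that closedness of $T$ transfers to $T+V$ by passing to the limit.

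The crux is to estimate the composition $V(T+i\mu)^{-1}$. The $C_\infty(\R)$--functional calculus exhibits $(T+i\mu)^{-1}$ and $T(T+i\mu)^{-1}$ as elements of $\B(E)$ of operator norm at most $1/\mu$ and $1$ respectively, these being the sup-norms of $t\mapsto 1/(t+i\mu)$ and $t\mapsto t/(t+i\mu)$ on $\R$. Since $(T+i\mu)^{-1}$ maps $E$ into $\dom(T)\subset\dom(V)$, the hypothesis yields, for every $x\in E$,
\[
\|V(T+i\mu)^{-1}x\| \le a\,\|T(T+i\mu)^{-1}x\| + b\,\|(T+i\mu)^{-1}x\| \le \bigl(a + b/\mu\bigr)\,\|x\|.
\]
Choosing $\mu > b/(1-a)$ makes $V(T+i\mu)^{-1}$ a bounded endomorphism of $E$ of norm strictly less than one, so a Neumann-series argument in the Banach algebra of bounded endomorphisms of $E$ (no adjointability is required) realizes $I + V(T+i\mu)^{-1}$ as a bijection of $E$. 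The algebraic identity $(T+V+i\mu)y = \bigl(I + V(T+i\mu)^{-1}\bigr)(T+i\mu)y$ for $y\in\dom(T)$, combined with surjectivity of $T+i\mu$ supplied by Proposition \ref{charregself}, then forces $\ran(T+V+i\mu) = E$. An analogous argument with $-\mu$ gives $\ran(T+V-i\mu) = E$, and another appeal to Proposition \ref{charregself} concludes.

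The main technical subtlety compared with the Hilbert-space case is that the operator-norm bounds on the resolvent $(T+i\mu)^{-1}$ and on $T(T+i\mu)^{-1}$ must be read off the $C_\infty(\R)$--functional calculus rather than from the spectral theorem; once these bounds are in hand, the rest is a direct translation of the classical argument. I remark that the Local--Global Principle provides a conceptually appealing alternative route, by reducing selfadjointness of $T+V$ to that of each localization $(T+V)^\omega = T^\omega + V^\omega$ and applying the classical Hilbert-space Kato--Rellich theorem in each $E^\omega$; however, transporting the norm-form relative bound through the localizations requires additional care, so the direct argument above seems the cleanest.
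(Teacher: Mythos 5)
Your proposal is correct and follows essentially the same route as the paper's own (one-line) proof, which rests on exactly the factorization $T+V\pm i\mu=\bigl(I+V(T\pm i\mu)^{-1}\bigr)(T\pm i\mu)$ with a Neumann series for $\mu$ large. The details you supply --- closedness of $T+V$ from the rearranged relative bound, the norm estimates $\|(T+i\mu)^{-1}\|\le 1/\mu$ and $\|T(T+i\mu)^{-1}\|\le 1$, and the observation that only bounded (not adjointable) invertibility of $I+V(T+i\mu)^{-1}$ is needed before invoking Proposition \ref{charregself} --- are precisely what the paper leaves implicit.
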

\begin{proof} The standard Hilbert space proof extends to this situation: namely,
for $\mu\in\R_+$ large enough the operators $T+V\pm i \mu=  (I+ V(T\pm
i\mu)\ii)(T\pm i \mu)$
is invertible and hence $T+V$ is selfadjoint and regular.
\end{proof}

The proof of W\"ust's extension to the case of relative bound $1$ \cite[Theorem X.14]{ReeSim:MMMII}
makes heavy use of the fact that Hilbert spaces are self--dual and of weak compactness
of the unit ball. These tools are not available for Hilbert $C^*$--modules. Our Local--Global
Principle allows us to generalize W\"ust's Theorem as follows:

\begin{theorem}[W\"ust]\label{t:Wust}
Let $T : \dom(T) \to E$ be a selfadjoint regular operator and let $V : \dom(V)
\to E$ be a symmetric operator such that $\dom(T) \su \dom(V)$. Suppose that
there exists a
$b\in \R_+$ such that for $x\in\dom(T)$
\[
    \inn{Vx,Vx}\le \inn{Tx,Tx}+b \inn{x,x}.
\]    
Then $T+V$ with domain $\dom(T)$ is
essentially selfadjoint and regular.
\end{theorem}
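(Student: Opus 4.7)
\emph{Plan.} The strategy is to reduce to the classical W\"ust theorem in each Hilbert space localization via the Local--Global Principle (Theorem \plref{t:locglob}). First, since $\dom(T+V) = \dom(T)$ is dense and $T+V$ is symmetric, its adjoint contains $T+V$ and is therefore densely defined; hence $T+V$ is semiregular and closable by Lemma \plref{l:BasicSemiRegular}. Let $S := \overline{T+V}$. The goal is to show $S$ is selfadjoint and regular; by Theorem \plref{t:locglob}.2 it suffices to show that $S^\om$ is selfadjoint in $E^\om$ for every state $\om \in S(\sA)$.

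Fix a state $\om$. Applying $\om$ to the assumed $\sA$-valued inequality yields, for $x \in \dom(T)$,
\[
\|\iom(Vx)\|^2 \le \|\iom(Tx)\|^2 + b\,\|\iom(x)\|^2 .
\]
The localization $T^\om$ is selfadjoint by Theorem \plref{t:locglob}.2 applied to the selfadjoint regular operator $T$, and by construction it is the closure of $\iom(x) \mapsto \iom(Tx)$ on $\iom(\dom(T))$. The displayed inequality permits one to define a symmetric operator $V^\om$ on $\iom(\dom(T))$ by $V^\om \iom(x) := \iom(Vx)$, and to extend it continuously to all of $\dom(T^\om)$ via the Cauchy estimate
\[
\|V^\om\iom(x_n) - V^\om\iom(x_m)\|^2 \le \|T^\om\iom(x_n) - T^\om\iom(x_m)\|^2 + b\,\|\iom(x_n) - \iom(x_m)\|^2 .
\]
The extension satisfies the same quadratic bound, which in particular gives $V^\om$ relative $T^\om$-bound at most $1$ (with constant $\sqrt{b}$).

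Classical W\"ust's theorem \cite[Theorem X.14]{ReeSim:MMMII} now implies that $T^\om + V^\om$ on $\dom(T^\om)$ is essentially selfadjoint. Since $V^\om$ is $T^\om$-bounded, $\iom(\dom(T))$ is also a core for $T^\om + V^\om$, so the closure of $T^\om + V^\om$ restricted to $\iom(\dom(T))$ coincides with the unique selfadjoint extension. To identify this closure with $S^\om$, observe that $\dom(T)$ is a core for $S$ by construction, so by Theorem \plref{t:CoreCriterion} $\iom(\dom(T))$ is a core for $S^\om$; hence $S^\om$ is the closure of $\iom(x) \mapsto \iom(Sx) = \iom(Tx) + \iom(Vx) = (T^\om + V^\om)\iom(x)$ on $\iom(\dom(T))$, which is selfadjoint by the preceding step. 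Theorem \plref{t:locglob}.2 then delivers that $S$ is selfadjoint and regular.

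The main obstacle I anticipate is the clean passage from the $\sA$-valued form inequality to a $T^\om$-relative bound for a genuine densely defined symmetric operator on $E^\om$: one uses positivity of states to pass to the scalar inequality on $\iom(\dom(T))$, and then uses that this subspace is a core for $T^\om$ (most transparently seen via the interior tensor product model $E \hot_\sA H_\om$) to bootstrap the a priori estimate into the definition of $V^\om$ on all of $\dom(T^\om)$.
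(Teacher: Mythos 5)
Your proposal is correct and follows essentially the same route as the paper: apply the state to the $\sA$-valued inequality to obtain a relative $T^\om$-bound $1$ for the localized perturbation, invoke the classical W\"ust theorem \cite[Theorem X.14]{ReeSim:MMMII} on the core $\iom(\dom(T))$, identify the resulting selfadjoint closure with the localization $(T+V)^\om$, and conclude via Theorem \plref{t:locglob}. Your extra care in extending $V^\om$ to all of $\dom(T^\om)$ and in matching $(T+V)^\om$ with $\ov{T^\om_0+V^\om_0}$ via the core criterion only makes explicit steps the paper leaves implicit.
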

\begin{proof} 
Let $\go\in S(\sA)$ be a state of $\sA$. Then we have for $x\in \dom(T)$
\begin{equation}\label{eq:118}  
    \begin{split}
          \| V^\go_0 \iom (x))\|^2 & = \go(\inn{Vx,Vx}) \le \go(\inn{Tx,Tx})
          +b\cdot  \go (\inn{x,x})\\
	      & = \| T^\go_0 \iom (x)\|^2 + b\cdot \| \iom (x)\|^2,
    \end{split}
\end{equation}
thus $V^\go_0$ is relative $T_0^\go$--bounded with relative bound $1$. Taking closures
shows that $V^\go$ is relative $T^\go$--bounded with relative bound $1$, too. By W\"ust's
Theorem \cite[Theorem X.14]{ReeSim:MMMII} it follows that $T^\go+V^\go$ is essentially
selfadjoint on any core for $T^\go$. In particular it is essentially selfadjoint on $\iom(\dom(T))$.
For $\iom (x), x\in\dom(T)$, however, we have $(T^\go+V^\go )\iom (x)=
\iom(Tx+Vx)=(T+V)_0^\go \iom (x)$.
Thus the localization $(T+V)^\go$ of $T+V$ is selfadjoint.

The claim now follows from Theorem \plref{t:locglob}.
\end{proof}

\section{Pure states, commutative algebras and involutive Hilbert $C^*$--modules} \label{s:Pure}   
Section \plref{ss:CPS} shows that in Theorem \plref{t:sep} one cannot conclude 
that $\go$ can be chosen to be pure. 

\begin{dfn}\label{def:AConvex}
 \textup{1. } $\{\varrho_j\}_{j=1}^n$ is called a partition
 of unity if 
 \begin{thmenum}
 \item $\varrho_j\in \sA, $\quad $j=1,\ldots, n-1$ and $\varrho_n\in \sA^+$,
 \item $\sum\limits_{j=1}^n \varrho_j^*\varrho_j=I.$
 \end{thmenum} 
 Here $\sA^+$ is $\sA$ if $\sA$ is unital and otherwise it denotes
 the unitalization of $\sA$; $I$ is the unit in $\sA^+$.

 \textup{2. } A subset $A\subset \sA$
is called $\sA$--convex if for any $x_1,\ldots,x_n\in A$ and
a partition of unity $\varrho_j\in \sA, j=1\ldots,n$ one has
\[
   \sum_{j=1}^{n} \varrho_j^*\,  x_j\, \varrho_j\, \in A.
\]
\end{dfn}

\begin{conjecture} \label{c:one} If in the situation of Theorem \plref{t:sep}
 $L$ is an $\sA$--submodule then there exists a pure state $\go$ such that 
$\iom(x_0)$ is not in the closure of
$\iom(L)$. In particular there exists a pure state $\go$ such that $\iom(L)$ is
  not dense in $E^\go$ and hence $\iom(L)^\perp\not=\{0\}$. 
\end{conjecture}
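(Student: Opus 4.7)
The natural strategy is to imitate the proof of Theorem \ref{t:sep} but to exploit the submodule structure of $L$ by working with $\sA$-convex hulls in place of ordinary convex hulls. Starting again with
\[
 A := \bigsetdef{\scalar{y-x_0}{y-x_0}}{y\in L}\subset \sA_+,
\]
one has $\|a\|\ge \delta^2$ for all $a\in A$, where $\delta=\dist(x_0,L)>0$. Theorem \ref{t:sep} gave only a state separating $0$ from the ordinary convex hull $\co{A}$. To get a pure state one would like instead to separate $0$ from the closed $\sA$-convex hull $\overline{\sA\text{-}\co(A)}$ in the sense of Definition \ref{def:AConvex}, and then invoke a non-commutative Hahn--Banach principle (of Wittstock/Effros--Webster type for operator-convex sets) to produce a unital completely positive map $\sA\to \B(H)$ whose matrix coefficients separate, followed by an extreme-point argument on the unit ball of UCP maps to produce a pure state.

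The first part of this programme requires showing that if $L$ is a submodule then $0\notin \overline{\sA\text{-}\co(A)}$, i.e.\ that every $\sA$-convex combination
\[
  b=\sum_{j=1}^n \rho_j^*\, \inn{y_j-x_0,y_j-x_0}\,\rho_j ,\qquad y_j\in L,\quad \sum_j \rho_j^*\rho_j=I,
\]
satisfies $\|b\|\ge \delta'$ for some uniform $\delta'>0$. The idea is to rewrite $b=\sum_j \inn{(y_j-x_0)\rho_j,(y_j-x_0)\rho_j}$ and to use that the submodule property guarantees $\sum_j y_j\rho_j\in L$, combining this with the matrix inequality $n(\inn{v_j,v_j})_j\ge (\inn{v_j,v_k})_{j,k}$ that was already the work-horse in \eqref{eq:MatrixInequality}. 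One would then conclude, after invoking the Hahn--Banach argument for $\sA$-convex sets and passing to an extreme UCP functional, exactly as in the proof of Theorem \ref{t:sep} but with ``state'' upgraded to ``pure state''. The second half of the conjecture (the regularity criterion via pure states) would then follow verbatim from the proof of Theorem \ref{t:locglob} with Theorem \ref{t:sep} replaced by this refined separation theorem.

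The main obstacle is exactly the step that makes the counterexample of Section \ref{ss:CPS} possible: for a partition of unity $\{\rho_j\}$ one has $\sum \rho_j^*\rho_j=I$ but in general $\sum \rho_j\neq I$, so
\[
 \sum_j (y_j-x_0)\rho_j \;=\;\sum_j y_j\rho_j \;-\; x_0\sum_j \rho_j
\]
is \emph{not} of the form $y-x_0$ with $y\in L$. Consequently the submodule hypothesis does not immediately translate into a lower bound for $\|b\|$. One way to try to bypass this is a matrix amplification trick: view $E^n$ as a Hilbert module over $M_n(\sA)$, note that $L^n$ is an $M_n(\sA)$-submodule and $(x_0,\ldots,x_0)\notin L^n$, apply Theorem \ref{t:sep} to obtain a separating state $\omega_n$ on $M_n(\sA)$, and then attempt a Choquet decomposition of $\omega_n$ into pure states on $M_n(\sA)$ whose restrictions to $\sA$ (via Stinespring) yield pure states separating in $E^{\omega}$. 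I expect that either this matrix amplification argument or a direct $\sA$-convex Hahn--Banach argument is the right vehicle, but controlling the behaviour of the resulting pure state uniformly in the approximating sequences is the essential technical hurdle—and it is precisely the point at which the special cases (commutative $\sA$ via Theorem \plref{t:Pure}, and $E=\sA$ via Theorem \plref{t:InvHMPureState}) admit extra structure that the general situation apparently does not.
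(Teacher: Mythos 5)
You should first note that this statement is labelled a \emph{conjecture} in the paper: the authors do not prove it in general, but only for commutative $\sA$ (Theorem \ref{t:Pure}) and for the module $E=\sA$ over an arbitrary $C^*$--algebra (Theorem \ref{t:InvHMPureState}). Your proposal is likewise not a proof, and you say so yourself; what you have written is essentially the paper's own roadmap. Your reduction to separating $0$ from the closed $\sA$--convex hull of $A$ is exactly the content of Conjectures \ref{c:two} and \ref{c:three}, and the obstruction you isolate --- that a partition of unity satisfies $\sum_j\varrho_j^*\varrho_j=I$ but not $\sum_j\varrho_j=I$, so an $\sA$--convex combination of the elements $\inn{y_j-x_0,y_j-x_0}$ need not dominate $\inn{y-x_0,y-x_0}$ for any single $y\in L$ --- is the genuine difficulty, and it is precisely what the counterexample of Section \ref{ss:CPS} exploits for merely convex $L$.

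It is worth recording how the paper handles the two cases it can settle, since neither uses the noncommutative Hahn--Banach machinery you invoke. For commutative $\sA$ the inequalities \eqref{eq:MatrixInequality} and \eqref{eq:106} are verified \emph{pointwise} on the Gelfand spectrum with $\gl_j=\varrho_j^*\varrho_j$: at each point these are nonnegative scalars summing to $1$, so the ordinary convexity computation applies and shows that $0$ is not in the closure of the $\sA$--convex hull of $A$; a compactness and partition-of-unity argument on the spectrum then produces the pure (point-evaluation) state. For $E=\sA$ the argument is completely different and does not pass through the set $A$ at all: a proper closed submodule $L\subset\sA$ is a proper closed right ideal, and \cite[Thm. 2.9.5]{Dix:CA} supplies a pure state $\go$ with $\go\restriction L=0$, whence $\iom(L)=\{0\}$ while $E^\go\neq\{0\}$. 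Your matrix-amplification suggestion does not appear in the paper; as you yourself observe, decomposing the separating state on $M_n(\sA)$ into pure states gives no uniform control as $n$ varies, so the general case remains open.
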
 

\begin{conjecture} \label{c:two} Let $\sA$ be a $C^*$--algebra and
 let $A\subset \sA_+$ be a closed $\sA$--convex subset of the positive cone of $\sA$.
 If $0\not\in A$ then there exist an $\eps>0$ and a pure state $\go$ such that
 $\go(a)\ge \eps$ for all $a\in A$.
\end{conjecture}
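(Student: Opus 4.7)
The plan is to refine the approach of Theorem~\ref{t:sep}: Hahn--Banach combined with Jordan decomposition already produces a state $\go$ with $\go(a)\ge\eps$ for all $a\in A$ from convexity alone, and the new task is to use the strictly stronger $\sA$--convexity hypothesis to arrange that $\go$ be pure.

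I would first settle the commutative model $\sA=C(X)$ with $X$ compact Hausdorff. Here an $\sA$--convex combination $\sum\rho_j^*a_j\rho_j$ becomes the pointwise convex combination $\sum|\rho_j|^2a_j$ with $\sum|\rho_j|^2=1$. Set $f(x):=\inf_{a\in A}a(x)$. If $f(x_0)>0$ for some $x_0\in X$ the evaluation state at $x_0$ does the job. Otherwise, for each $x$ one picks $a_x\in A$ with $a_x<\eps$ on an open neighborhood $U_x\ni x$; a subordinate partition of unity $\{\rho_i^2\}$ on a finite subcover produces $b:=\sum\rho_ia_{x_i}\rho_i\in A$ with $\|b\|\le\eps$, and letting $\eps\to 0$ contradicts the closedness of $A$ together with $0\notin A$. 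The non--unital case reduces to this by passing to $\sA^+$. The same idea works verbatim for $\sA=M_n$ using $\rho_j=E_{jj}$: the resulting $b$ is diagonal with small entries and hence has small operator norm.

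For a general $C^*$--algebra I would try to transport this into an irreducible representation $(\pi,H_\pi)$. Kadison's transitivity theorem lets one realize $\pi(\rho_j)$ approximately as rank--one projections $|\xi_j\rangle\langle\xi_j|$ for a chosen orthonormal family $\xi_1,\dots,\xi_n\in H_\pi$, and one completes to a partition of unity by $\rho_{n+1}:=\bigl(I-\sum_{j=1}^n\rho_j^*\rho_j\bigr)^{1/2}$. Choosing $a_j\in A$ with $\langle\xi_j,\pi(a_j)\xi_j\rangle<\eps$ then makes $b:=\sum_{j=1}^{n+1}\rho_j^*a_j\rho_j\in A$ small on each $\xi_i$, $i\le n$, when tested against $\pi$.

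The hard part is controlling $b$ \emph{globally}: $\|b\|_\sA=\sup_{\pi'}\|\pi'(b)\|$ over all irreducible representations, and the construction controls $\pi'(b)$ only at the single fixed $\pi'=\pi$, so $b$ may have large norm elsewhere. Even inside $\pi$ the tail contribution $(I-P)\pi(a_{n+1})(I-P)$ with $P:=\sum_{j=1}^n|\xi_j\rangle\langle\xi_j|$ cannot be made uniformly small on the infinite--dimensional subspace $\mathrm{ran}(I-P)$, since no single $a_{n+1}\in A$ realizes smallness of infinitely many pure states at once. Bypassing these obstructions would seem to require a non--commutative partition of unity adapted to an open cover of the primitive ideal space $\mathrm{Prim}(\sA)$; however $\mathrm{Prim}(\sA)$ is generally non--Hausdorff and admits no such partitions without further structural hypotheses on $\sA$---which is presumably why the statement is posed only as a conjecture and established in the paper only under additional restrictions.
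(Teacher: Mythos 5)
This statement is posed in the paper only as a conjecture; the paper proves it solely for commutative $\sA$ (inside the proof of Theorem \ref{t:Pure}), and your commutative argument is essentially identical to that proof --- a finite subcover of the Gelfand spectrum, a subordinate partition of unity $\{\sqrt{\chi_i}\}$, and the $\sA$--convex combination $\sum\chi_i a_{x_i}$ of norm $\le\eps$ contradicting closedness and $0\notin A$ (with the same minor care needed at the point at infinity in the non--unital case). Your $M_n$ observation via $\rho_j=E_{jj}$ is correct and is a small genuine addition not in the paper, and your diagnosis of why Kadison transitivity fails to control the norm of the combination globally accurately reflects why the general case remains open; the paper's only further progress is on the related Conjecture \ref{c:one} for $E=\sA$, which it obtains by an entirely different route (a proper right ideal is annihilated by a pure state, \cite[Thm.~2.9.5]{Dix:CA}) rather than through Conjecture \ref{c:two}.
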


\begin{conjecture}\label{c:three} Conjecture \plref{c:two} implies 
 Conjecture \plref{c:one}.
\end{conjecture}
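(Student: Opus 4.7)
The plan is to reproduce the argument of Theorem \plref{t:sep} with the ordinary convex hull replaced by the $\sA$--convex hull and the Hahn--Banach/Jordan step replaced by an application of Conjecture \plref{c:two}. Given a closed $\sA$--submodule $L\subset E$ and a vector $x_0\in E\setminus L$, I would form
\[
 A_0 \;:=\; \bigsetdef{\inn{x_0-y,x_0-y}}{y\in L}\;\subset\; \sA_+
\]
and let $A\subset \sA_+$ denote the closed $\sA$--convex hull of $A_0$. Once the key estimate $0\notin A$ has been established, Conjecture \plref{c:two} delivers a pure state $\om\in S(\sA)$ and an $\eps>0$ with $\om(a)\ge \eps$ for every $a\in A$. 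Applied to $a=\inn{x_0-y,x_0-y}\in A_0\subset A$ and combined with the GNS identification of Section \plref{ss:LHC}, this reads $\|\iom(x_0)-\iom(y)\|_{E^\om}^2\ge \eps$ for all $y\in L$, so that $\iom(x_0)\notin\overline{\iom(L)}$; this is precisely the content of Conjecture \plref{c:one}.

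The real work is therefore the verification that $0\notin A$. A typical element of the $\sA$--convex hull has the form
\[
 b \;=\; \sum_{j=1}^n \varrho_j^*\inn{x_0-y_j,x_0-y_j}\varrho_j \;=\; \sum_{j=1}^n \inn{(x_0-y_j)\varrho_j,(x_0-y_j)\varrho_j}
\]
with $y_j\in L$ and $\sum_j \varrho_j^*\varrho_j=I$. Since $L$ is an $\sA$--submodule one has $y_j\varrho_j\in L$, so each summand measures the distance from $x_0\varrho_j$ to $L$. The natural attempt is to bound $\|b\|$ from below by $d(x_0,L)^2$ via an $\sA$--valued Cauchy--Schwarz estimate: for any $s_j\in \sA$,
\[
 \Big\|\sum_{j=1}^n (x_0-y_j)\varrho_j s_j\Big\|^2 \;\le\; \|b\|\cdot \Big\|\sum_{j=1}^n s_j^*s_j\Big\|.
\]
A judicious choice of the $s_j$ producing an element of the form $x_0-\tilde y$ with $\tilde y\in L$ on the left, with bounded coefficient on the right, would yield a uniform lower bound $\|b\|\ge \delta>0$ depending only on $d(x_0,L)$, and hence $0\notin A$.

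The main obstacle is precisely this choice of the $s_j$. The tempting selection $s_j=\varrho_j^*$ gives $\sum_j(x_0-y_j)\varrho_j\varrho_j^* = x_0 Q-y'$ with $Q=\sum_j\varrho_j\varrho_j^*$ and $y'\in L$, and the argument goes through only under the additional row relation $Q=I$, which is not part of the definition of a partition of unity. Already for $\sA=M_2(\C)$ and $L$ a proper right ideal one can exhibit partitions $\{\varrho_j\}\subset L$ with $\sum_j \varrho_j^*\varrho_j=I$ but $x_0\varrho_j\in L$ for every $j$, which forces the corresponding $b$ to vanish and shows that the naive $\sA$--convex hull need not separate $0$ from $A_0$. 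I therefore expect the bulk of the proof to consist of either \textbf{(i)} exploiting the fact that $L$ is a closed submodule of the Hilbert $C^*$--module $E$, rather than just an abstract subset of $\sA$, to rule out such degenerate partitions from the closed $\sA$--convex hull---this would have to generalize the direct computations underlying the commutative and $E=\sA$ cases of Theorems \plref{t:Pure} and \plref{t:InvHMPureState}---or \textbf{(ii)} replacing $A$ by a carefully engineered intermediate $\sA$--convex set $A_0\subseteq \tilde A\subseteq A$ on which the Cauchy--Schwarz step succeeds uniformly, so that Conjecture \plref{c:two} applied to $\tilde A$ still separates $\iom(x_0)$ from $\iom(L)$. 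Identifying such a $\tilde A$, or equivalently the correct non--degeneracy condition on admissible partitions, is where I anticipate the real difficulty to lie.
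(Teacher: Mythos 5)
There is a genuine gap here, but it is important to be clear about its status: the statement you are asked to prove is itself left open in the paper. The implication ``Conjecture \plref{c:two} $\Rightarrow$ Conjecture \plref{c:one}'' is only ever established for commutative $\sA$ (Theorem \plref{t:Pure}), where the coefficients $\varrho_j^*\varrho_j$ of an $\sA$--convex combination are nonnegative functions summing to $1$, so that the inequalities \eqref{eq:MatrixInequality} and \eqref{eq:106} can be checked pointwise on the Gelfand spectrum and the separation argument of Theorem \plref{t:sep} applies verbatim to the $\sA$--convex hull of the set $A$ from \eqref{eq:102}. Your proposal reproduces exactly that strategy and then---correctly---stalls at the point where the paper itself stalls: in the noncommutative case there is no known substitute for the estimate $0\notin\co{A_0}$ once the convex hull is replaced by the $\sA$--convex hull, and you do not supply one. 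So what you have written is an accurate description of the open problem, not a proof of the conjecture.

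Your diagnosis of the obstruction is in fact sharper than a mere failure of one Cauchy--Schwarz step: the naive reduction is genuinely false. Take $\sA=E=M_2(\C)$, let $L$ be the (proper, closed) right submodule of matrices with vanishing second row, and $x_0=e_{22}$. Then $\inn{x_0-y,x_0-y}=e_{22}+y^*y\ge e_{22}$ for all $y\in L$, so $\delta=1$ in \eqref{eq:101}; but for the partition of unity $\varrho_1=e_{11}$, $\varrho_2=e_{12}$ (indeed $e_{11}^*e_{11}+e_{12}^*e_{12}=e_{11}+e_{22}=I$) and $y_1=y_2=0\in L$ one computes
\[
\varrho_1^*\inn{x_0,x_0}\varrho_1+\varrho_2^*\inn{x_0,x_0}\varrho_2
= e_{11}e_{22}e_{11}+e_{21}e_{22}e_{12}=0 ,
\]
so $0$ already lies in the $\sA$--convex hull of $A_0$, even though the conclusion of Conjecture \plref{c:one} does hold for this $L$ (Theorem \plref{t:InvHMPureState}, via the pure state $a\mapsto\inn{e_2,ae_2}$, which annihilates $L$). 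Hence any proof of Conjecture \plref{c:three} must feed Conjecture \plref{c:two} a set other than the $\sA$--convex hull of $A_0$; your alternatives (i) and (ii) name the right question, but neither is carried out --- and the paper does not carry it out either, which is precisely why the implication is stated there as a conjecture rather than a theorem.
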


\begin{remark} Theorem \plref{t:sep} and its proof show that if one replaces
 ``$\sA$--convex'' by the weaker condition ``convex'' then all three conjectures
 hold true if one replaces ``pure state'' by the weaker conclusion 
 ``state''. Section \plref{ss:CPS} shows that under the weaker condition
 ``convex'' the statement of Conjecture \plref{c:two} becomes false
 for pure states.
\end{remark}

\begin{theorem}\label{t:LocGlobPure} If Conjecture \plref{c:one} holds
 for a $C^*$--algebra $\sA$ then in statement \textup{(3)} under \textup{1.}
 and \textup{2.} of  Theorem
 \plref{t:locglob}  ``state'' can be replaced by ``pure state''.
\end{theorem}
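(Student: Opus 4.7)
The plan is to replay the proof of Theorem \plref{t:locglob} almost verbatim, substituting Conjecture \plref{c:one} for Theorem \plref{t:sep} at the single step where a separating state is actually produced. As in the original argument, Lemma \plref{l:THat} reduces Part~1 to Part~2, so it suffices to show, assuming Conjecture \plref{c:one} for $\sA$, that a closed densely defined symmetric operator $T$ is selfadjoint and regular whenever $T^\go$ is selfadjoint for every \emph{pure} state $\go \in S(\sA)$.

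Since every pure state is a state, the implication $(1)\Rightarrow(3)$ in the pure-state form is inherited directly from Theorem \plref{t:locglob}, and I would only treat the converse. So assume $T^\go$ is selfadjoint for every pure state $\go$ and suppose by contradiction that $T$ is not regular. By Proposition \plref{charregself} the range $\ran(T+i)$ is a proper closed subset of $E$, and it is an $\sA$-submodule by the $\sA$-linearity of $T$ (Lemma \plref{l:BasicSemiRegular}). Pick any $x_0 \in E \setminus \ran(T+i)$. Invoking Conjecture \plref{c:one}, assumed for $\sA$, yields a pure state $\go$ such that $\iom(x_0) \notin \ov{\iom(\ran(T+i))}$. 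As in the original proof,
\[
    \ov{\iom(\ran(T+i))} = \ov{\ran(T^\go_0 + i)} = \ran(T^\go + i),
\]
so $T^\go + i$ fails to have dense range in $E^\go$, contradicting the selfadjointness of $T^\go$. The same reasoning applied to $T-i$ completes the argument.

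The only substantive issue to verify is that $\ran(T \pm i)$ satisfies the hypothesis of Conjecture \plref{c:one}, i.e.\ that it is a closed $\sA$-submodule, which is immediate from Proposition \plref{charregself} and Lemma \plref{l:BasicSemiRegular}. No other step in the proof of Theorem \plref{t:locglob} covertly produces a non-pure state, so once this substitution is made the rest of the argument goes through unchanged. Consequently the real obstacle lies not in adapting the proof but in Conjecture \plref{c:one} itself; Theorem \plref{t:LocGlobPure} is essentially a bookkeeping statement identifying the single place where pure states enter the argument.
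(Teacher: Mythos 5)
Your proposal is correct and follows exactly the paper's own argument: the paper's proof is precisely to rerun the proof of Theorem \plref{t:locglob}, substituting the conclusion of Conjecture \plref{c:one} for that of Theorem \plref{t:sep} at the single point where a separating state is produced for the proper closed submodule $\ran(T\pm i)$. Your elaboration of why that substitution is legitimate (closedness and submodule property of the range via Proposition \plref{charregself} and Lemma \plref{l:BasicSemiRegular}) matches the paper's intent, which it leaves implicit.
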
 
\begin{remark}\label{rem:LocGlobPure}
Actually, for this conclusion to hold the second sentence in
Conjecture \plref{c:one} suffices.
\end{remark}
\begin{proof} One argues as in the proof of Theorem \plref{t:locglob}
 replacing the conclusion of Theorem \plref{t:sep} by that of
 Conjecture \plref{c:one}. With regard to the previous remark we
 emphasize that indeed in \eqref{eq:153} only the last sentence
 of Theorem \plref{t:sep} was used.
\end{proof} 
\subsection{Commutative algebras}\label{ss:CA}
We are now going to prove that all three conjectures are true
for commutative $C^*$--algebras.

\begin{theorem}\label{t:Pure} If $\sA$ is commutative then Conjectures
 \plref{c:one}, \plref{c:two}, and \plref{c:three} hold.
\end{theorem}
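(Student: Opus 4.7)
The plan is to reduce to the case $\sA=C_0(X)$ for $X$ locally compact Hausdorff via Gelfand duality, exploiting that the pure states of $\sA$ are exactly the point evaluations $\delta_x$, $x\in X$. I first prove Conjecture \plref{c:two} directly by a partition-of-unity argument on the one-point compactification $X^+$. Then I derive Conjecture \plref{c:three} by exhibiting a closed $\sA$-convex set in $\sA_+$ attached to a closed submodule, which together with Conjecture \plref{c:two} yields Conjecture \plref{c:one}.

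For Conjecture \plref{c:two}, let $A\subset C_0(X)_+$ be closed and $\sA$-convex with $0\notin A$, and consider $\Phi(x):=\inf_{f\in A}f(x)$, which is upper semicontinuous. It suffices to show $\Phi(x_*)>0$ for some $x_*\in X$, so that $\delta_{x_*}$ is the desired pure state. Assume for contradiction $\Phi\equiv 0$, fix an arbitrary $h\in A$ (it will play the role of the ``element at infinity''), and fix $\eps>0$. Then $K:=\{x:h(x)\ge\eps\}$ is compact; for each $x\in K$ pick $f_x\in A$ with $f_x(x)<\eps$ and an open neighborhood $U_x\subset X$ of $x$ on which $f_x<\eps$. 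A finite subcover $\{U_{x_j}\}_{j=1}^{n-1}$ of $K$ together with $V_n:=X^+\setminus K$ covers $X^+$; let $\{\tau_j\}_{j=1}^n$ be a subordinate continuous partition of unity on $X^+$ and set $\varrho_j:=\sqrt{\tau_j}$. Then $\varrho_j\in C_0(X)$ for $j<n$, $\varrho_n\in C(X^+)=\sA^+$, and $\sum\varrho_j^*\varrho_j=I$. By $\sA$-convexity of $A$,
\[
g:=\sum_{j=1}^{n-1}\varrho_j^*\,f_{x_j}\,\varrho_j+\varrho_n^*\,h\,\varrho_n=\sum_{j=1}^{n-1}\tau_j f_{x_j}+\tau_n h \; \in \; A,
\]
and a case check (whether $x\in K$ or not) yields $g(x)\le\eps$ pointwise, hence $\|g\|\le\eps$. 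Sending $\eps\to 0$ produces a sequence in $A$ converging to $0$, contradicting $0\notin A=\overline A$.

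For Conjecture \plref{c:three} in the commutative case, let $L\subset E$ be a closed submodule with $x_0\notin L$ and set
\[
B:=\bigsetdef{b\in\sA_+}{b\ge\inn{y-x_0,y-x_0}\text{ for some }y\in L}.
\]
The key computation is: for $b_j\in B$ with witnesses $y_j\in L$ and a partition of unity $\{\varrho_j\}_{j=1}^n$, put $y:=\sum_j y_j\cdot|\varrho_j|^2\in L$. Commutativity and $\sum_j|\varrho_j|^2=I$ give $y-x_0=\sum_j(y_j-x_0)\cdot|\varrho_j|^2$, and pointwise Jensen in the fiber $E^{\delta_x}$ (with weights $|\varrho_j(x)|^2$ summing to $1$) yields
\[
\|(y-x_0)(x)\|_x^2\;\le\;\sum_j|\varrho_j(x)|^2\,\|y_j(x)-x_0(x)\|_x^2,
\]
hence $\inn{y-x_0,y-x_0}\le\sum_j\varrho_j^*\inn{y_j-x_0,y_j-x_0}\varrho_j\le\sum_j\varrho_j^*b_j\varrho_j$ in $\sA_+$, showing $\sum_j\varrho_j^*b_j\varrho_j\in B$. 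Thus $B$, and its norm closure $\overline{B}$, are $\sA$-convex. The closedness of $L$ gives $\|b\|\ge\operatorname{dist}(x_0,L)^2>0$ uniformly for $b\in B$, so $0\notin\overline{B}$. Applying the already proved Conjecture \plref{c:two} to $\overline{B}$ produces a pure state $\go$ and $\eps>0$ with $\go(\inn{y-x_0,y-x_0})\ge\eps$ for all $y\in L$, which says precisely that $\iom(x_0)\notin\overline{\iom(L)}$. This establishes Conjecture \plref{c:three} for commutative $\sA$ and, combined with Conjecture \plref{c:two}, Conjecture \plref{c:one}.

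I expect the main technical obstacle to be the non-unital bookkeeping: arranging the partition of unity so that only $\varrho_n$ is permitted to live in $\sA^+$ while $\varrho_1,\ldots,\varrho_{n-1}\in\sA=C_0(X)$, and interpreting $y_n\cdot|\varrho_n|^2\in L$ via the canonical Hilbert $\sA^+$-module extension of $E$ (writing $\varrho_n=c+a$ with $c\in\C$, $a\in\sA$). The auxiliary element $h\in A$ is precisely what allows the argument of Conjecture \plref{c:two} to control $g$ near infinity; any attempt to build a partition of unity indexed only by points of $X$ would fail when $X$ is non-compact.
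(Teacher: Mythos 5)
Your proof is correct and follows essentially the same route as the paper: Conjecture \ref{c:two} is established on the Gelfand spectrum by a compactness/partition-of-unity argument (your auxiliary element $h$ and the compact set $\{h\ge\eps\}$ play the role of the neighbourhood of $\infty$ in the paper's covering of the spectrum of $\sA^+$), and Conjecture \ref{c:one} is then deduced by applying Conjecture \ref{c:two} to an $\sA$--convex set built from $\inn{y-x_0,y-x_0}$, $y\in L$, using the pointwise (fiberwise Jensen) version of the inequality \eqref{eq:106} with $\gl_j=\varrho_j^*\varrho_j$. The only cosmetic difference is that you saturate upwards to the set $B$ rather than taking the $\sA$--convex hull of $A$ itself; both rest on the identical key inequality.
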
 

\begin{proof}
\indent\par
\subsubsection*{ \plref{c:two} $\Rightarrow$ \plref{c:one} }
The inequalities \eqref{eq:MatrixInequality} and  \eqref{eq:106}
are proved verbatim for $\gl_j=\varrho_j^* \varrho_j$ and $y_j\in L$.
Since $\sA$ is commutative they can be checked pointwise
on the Gelfand spectrum of $\sA$. Thus as in the proof of
Theorem \plref{t:sep} one concludes that the $\sA$--convex
hull $\widetilde A$ of the set $A$ defined in \eqref{eq:102} 
does not have $0$ in its closure. Conjecture \plref{c:two} now gives
us a pure state $\go$ which implies the validity of Conjecture \plref{c:one}.

\subsubsection*{ We will now prove Conjecture \plref{c:two} for $\sA$
commutative}
Let $X$ be the Gelfand spectrum of $\sA^+$. This is a compact Hausdorff
space. If $\sA$ is unital then $\sA\simeq C(X)$ and if $\sA$ is non-unital 
then there is a distinguished point $\infty\in X$ such that
$\sA\simeq \bigsetdef{f\in C(X)}{ f(\infty)=0}$.

Furthermore, each $p\in X$ ($X\setminus\{\infty\}$ in the non-unital case)
gives rise to a pure state $\go_p(f)=f(p)$ and every pure state
arises in this way.

We proceed by contradiction and assume that the conclusion of Conjecture
\plref{c:two} does not hold. Then for given $\eps>0$ and each
$p\in X$ there exist an open neighborhood $U_p$ of $p$ and an $f_p\in A$ with 
$f_p(q)\le \eps$ for $q\in U_p$. By compactness there exist finitely
many $p_1,\ldots, p_n$ such that $X=U_{p_1}\cup \ldots \cup U_{p_n}$.
In the non-unital case we may choose and enumerate them such that $\infty\in U_{p_n}$
and $\infty\not\in U_{p_j}$ for $j=1,\ldots,n-1$.
Since compact spaces are paracompact there exists a subordinated
partition of unity $\chi_1,\ldots,\chi_n, \chi_j\in C_c(U_{p_j})$.
The set $\{\sqrt{\chi_j}\}_{j=1}^n$ is then
a partition of unity in the sense of Definition \plref{def:AConvex}.
Hence $f:=\sum\limits_{j=1}^n \chi_j f_{p_j}=\sum\limits_{j=1}^n \sqrt{\chi_j}\, f_{p_j}
\, \sqrt{\chi_j}$ 
is in $A$ by $\sA$--convexity and it satisfies
$0\le f \le \eps$. This shows that $0$ is in $A$ contradicting the
assumption $0\not\in A$.
\end{proof} 

We can now easily deduce the following generalization of a 
result of Pal \cite[Prop. 4.1]{Pal:ROH} about regular operators
over commutative $C^*$--algebras.

\begin{theorem}\label{t:ROvectorbundles} Let $E$ be a \emph{finitely}
generated Hilbert $C^*$--module over the commutative $C^*$--algebra $\sA$.
Then every semiregular operator $T$ in $E$ is regular.
\end{theorem}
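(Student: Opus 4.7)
The plan is to deduce the theorem from the pure-state refinement of the Local--Global Principle. By Theorem \ref{t:Pure}, Conjecture \ref{c:one} is established for every commutative $C^*$-algebra $\sA$; combined with Theorem \ref{t:LocGlobPure} (and Remark \ref{rem:LocGlobPure}) this means that in criterion 1.(3) of Theorem \ref{t:locglob} the adjoint identity $(T^*)^\go=(T^\go)^*$ need only be verified for \emph{pure} states $\go$. Via Gelfand duality, the pure states of $\sA$ are precisely the point-evaluations $\go_p(a)=a(p)$ at points $p$ of the spectrum $X$ of $\sA^+$.

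The decisive observation is that for such a pure state the localized Hilbert space $E^{\go_p}$ is \emph{finite-dimensional}. Since $\go_p$ is a character, the GNS representation acts on $H_{\go_p}=\cc$, and the identification \eqref{eq:109} yields $\iom(ea)=a(p)\,\iom(e)$ for $e\in E$ and $a\in\sA$. Choose a finite generating set $e_1,\dots,e_n$ of $E$, so that $E=\overline{\sum_{j=1}^n e_j\sA}$. Applying $\iom$ and using its continuity, we obtain
\[
  \iom(E)\;\subseteq\;\operatorname{span}_\cc\{\iom(e_1),\dots,\iom(e_n)\}.
\]
The right-hand side is a finite-dimensional, hence closed, subspace of $E^{\go_p}$. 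As $\iom(E)$ is dense in $E^{\go_p}$, it follows that $E^{\go_p}$ coincides with this span, and in particular $\dim_\cc E^{\go_p}\le n$.

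On a finite-dimensional Hilbert space every linear subspace is closed. Consequently the dense subspaces $\iom(\dom T)$ and $\iom(\dom T^*)$ already exhaust $E^{\go_p}$, so both $T_0^{\go_p}$ and $(T^*)_0^{\go_p}$ are defined on all of $E^{\go_p}$ and coincide with their closures $T^{\go_p}$ and $(T^*)^{\go_p}$ respectively. A linear map defined everywhere on a finite-dimensional Hilbert space admits a unique adjoint, itself defined on the whole space, so the inclusion $(T^*)^{\go_p}\subset(T^{\go_p})^*$ supplied by Lemma \ref{l:LocalizedOperator} is forced to be an equality. This verifies criterion 1.(3) of Theorem \ref{t:locglob}, and hence $T$ is regular.

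I do not expect a genuinely hard step in this argument once the pure-state form of the Local--Global Principle is in hand; the whole matter reduces to the fact that finite generation bounds $\dim_\cc E^{\go_p}$ by the number of generators, after which densely defined operators on a finite-dimensional Hilbert space are trivially adjointable. The essential ingredients are thus the restriction to pure states (Theorem \ref{t:Pure}) together with the collapse of the module tensor product at a character to an actual finite-dimensional quotient.
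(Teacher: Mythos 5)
Your proposal is correct and follows essentially the same route as the paper: reduce to pure states via Theorem \ref{t:Pure} and Theorem \ref{t:LocGlobPure}, observe that at a character the localization $E^{\go_p}$ is spanned by the images of the finitely many generators and is therefore finite-dimensional, and conclude that the everywhere-defined localized operators force the inclusion $(T^*)^{\go_p}\subset(T^{\go_p})^*$ of Lemma \ref{l:LocalizedOperator} to be an equality. The only difference is cosmetic: you spell out the closure/continuity step for topological generation, which the paper leaves implicit.
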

\begin{proof}
As before let $X$ be the Gelfand spectrum of $\sA^+$.
By Theorem \plref{t:Pure} it suffices to show that for each pure
state $\go_p, p\in X,$ the localized operator $T^\go$ satisfies
$(T^*)^\go=(T^\go)^*$. 

Let $f_1,\ldots,f_N\in E$ be a generating set over $\sA$.
Recall that $E^\go$ is the Hilbert space completion of 
$E/\sN_\go$ with respect to the scalar product $\inn{f,g}(p)$.
Given a fixed $f\in E$. Then for $\varphi\in\sA$ the vector
$\iom(f\varphi)$ depends only on the value $\varphi(p)$ and
hence $E^\go$ is the vector space spanned by $\iom(f_1),\ldots,\iom(f_N)$
and thus is finite--dimensional. $T^\go$ and $(T^*)^\go$ are,
as densely defined closed operators in a finite--dimensional vector space,
everywhere defined and bounded. The inclusion $(T^*)^\go\subset (T^\go)^*$
(Lemma \plref{l:LocalizedOperator}) then implies $(T^*)^\go = (T^\go)^*$.
\end{proof}

\subsection{Regular operators over $C^*$--algebras and involutive Hilbert
$C^*$--modules}\label{ss:ROCA} 

\begin{theorem}\label{t:InvHMPureState}
Let $\sA$ be a $C^*$--algebra. Then for the Hilbert $C^*$--module
$E=\sA$ the conclusion of the last sentence in Conjecture \plref{c:one}
holds and hence in statement \textup{(3)} under \textup{1.}
 and \textup{2.} of  Theorem
 \plref{t:locglob}  ``state'' can be replaced by ``pure state''.
\end{theorem}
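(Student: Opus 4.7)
The plan is to identify $L$ concretely. Since $E=\sA$ is a right module with inner product $\inn{a,b}=a^*b$, a closed submodule $L\subset E$ is precisely a closed right ideal of $\sA$, and ``proper submodule'' means $L\subsetneq\sA$; taking adjoints, $L^*$ is then a proper closed left ideal. I would next invoke the classical fact (see, e.g., Dixmier, \textit{$C^*$-algebras}, Proposition 2.9.5, or Pedersen, \textit{$C^*$-algebras and their automorphism groups}, Proposition 3.13.6) that every proper closed left ideal of a $C^*$-algebra is contained in the left kernel $N_\tau=\{a\in\sA : \tau(a^*a)=0\}$ of some pure state $\tau$. Applied to $L^*$, this produces a pure state $\go$ with $L^*\subset N_\go$, and this is the pure state I would use throughout.

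With such $\go$ fixed, I would exploit the GNS description $E^\go\cong H_\go$ under which $\iom(a)=\pi_\go(a)\xi_\go$, with cyclic unit vector $\xi_\go$ satisfying $\inn{\xi_\go,\iom(a)}=\go(a)$ for all $a\in\sA$. The goal is to show $\xi_\go\in\iom(L)^\perp\setminus\{0\}$. For $\ell\in L$ the inclusion $\ell^*\in L^*\subset N_\go$ reads $\go((\ell^*)^*\ell^*)=\go(\ell\ell^*)=0$; combined with the state Cauchy--Schwarz inequality $|\go(\ell)|^2\le\go(\ell\ell^*)$, this forces $\go(\ell)=0$ for every $\ell\in L$. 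Hence $\inn{\xi_\go,\iom(\ell)}=\go(\ell)=0$, and since $\|\xi_\go\|=1$ the vector $\xi_\go$ furnishes the required nonzero element of $\iom(L)^\perp$. This proves the last sentence of Conjecture \plref{c:one} for $E=\sA$.

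The second clause of the theorem---that ``state'' may be replaced by ``pure state'' in items \textup{1.(3)} and \textup{2.(3)} of Theorem \plref{t:locglob}---then follows at once from Theorem \plref{t:LocGlobPure} together with Remark \plref{rem:LocGlobPure}, since what has just been established is exactly the hypothesis invoked there.

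The main obstacle is the appeal to the Dixmier--Pedersen pure-state-extension result, which is classical but not entirely elementary. If a self-contained argument is preferred, one can recover it by Hahn--Banach plus Krein--Milman applied to the weak-$*$ compact face of the quasi-state space of $\sA$ consisting of positive functionals vanishing on the hereditary $C^*$-subalgebra $L\cap L^*$; the sole delicate point is then the nonemptiness of that face, which is where the hereditary structure enters decisively.
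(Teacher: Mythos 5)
Your argument is correct, and it follows the same route as the paper: both proofs reduce to Dixmier's theorem that a proper closed (left) ideal is annihilated by the left kernel of some pure state, and both then conclude via the cyclic representation. The one genuine difference is that you handle the left/right bookkeeping more carefully than the paper does. The paper asserts that $\go\restriction L=0$ forces $\iom(L)=\{0\}$; but since $L$ is a \emph{right} ideal and $\|\iom(\ell)\|^2=\go(\ell^*\ell)$ (not $\go(\ell\ell^*)$), this intermediate claim is false in general --- already for $\sA=M_2(\C)$ and $L$ the right ideal of matrices with vanishing second row, no pure state satisfies $\go(\ell^*\ell)=0$ for all $\ell\in L$, yet $\iom(L)$ is a proper subspace. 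Your version repairs this by applying Dixmier to $L^*$, deducing $\go(\ell\ell^*)=0$, hence $\go(\ell)=0$ by Cauchy--Schwarz, and exhibiting the cyclic vector $\xi_\go$ as an explicit nonzero element of $\iom(L)^\perp$; that is exactly the right fix, and the rest (invoking Theorem \plref{t:LocGlobPure} and Remark \plref{rem:LocGlobPure}) matches the paper. Your closing remark about recovering the Dixmier--Pedersen result from Hahn--Banach and Krein--Milman is fine but unnecessary; citing the classical theorem, as the paper does, is entirely adequate here.
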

\begin{remark}\label{rem:InvHMPureState}
1. So for unbounded semiregular operators over $C^*$--algebras 
regularity can be checked by looking at
the localizations with respect to pure states. We emphasize that
\cite[Sec. 3]{Pal:ROH} does not help in extending this result
to semiregular operators over general Hilbert $C^*$--modules
because loc. cit. only shows that a (semi)regular operator
in $E$ is equivalent to an operator in $\sK(E)$.
Our Theorem then says that one could check regularity now
by looking at the localizations of the equivalent operator
with respect to the pure states of $\sK(E)$.

2. Theorem \plref{t:InvHMPureState} can in principle be
extracted from \cite[Prop. 2.5]{Wor:UEA}, although there
it is not stated explicitly. Our proof, however, is basically
the same as the one in loc. cit.
\end{remark}
\begin{proof}
Let $L$ be a proper $\sA$ right submodule of $E$. Then $L$
is a proper \emph{right ideal} in $\sA$ and hence
by \cite[Thm. 2.9.5]{Dix:CA} there exists a pure state
$\go\in S(\sA)$ such that $\go\restriction L=0$.
Hence $\iom(L)=\{0\}$ but since $\go$ is a state
certainly $E^\go\not=\{0\}$ proving $\iom(L)^\perp\not=\{0\}$.
\end{proof}

The argument of the previous proof exploits that the Hilbert $C^*$--module
$E=\sA$ has a little more structure:
Namely, it is a left-- and a right module. The inner product compatible
with the right module structure is $\inn{a,b}=a^*b$ and the
inner product compatible with the left module structure is $\inn{a,b}_l=ab^*$.
Obviously, the involution $*:\sA\to\sA$ has the property
$\inn{a,b}=\inn{a^*,b^*}_l$. This motivates

\begin{dfn}\label{def:InvHM}
An involutive Hilbert $C^*$--module over a $C^*$--algebra $\sA$ is a
Hilbert $C^*$--module $E$ together with a bounded involution
$*:E\to E$.
\end{dfn}

Putting $a x:=(x a^*)^*$ and $\inn{x,y}_l:=\inn{x^*,y^*}$
for  $a\in\sA$, $x,y\in E$ gives $E$ the structure of an $\sA$ Bimodule
such that $(E,\inn{\cdot,\cdot}_l)$ is a left Hilbert $\sA$--module
and $(E,\inn{\cdot,\cdot})$ is a right Hilbert $\sA$--module.

For any $C^*$--algebra $\sA$ the space $E=\sA^n$ is an involutive
Hilbert module via $(a_j)^*_j:=(a_j^*)_j$. However, for noncommutative $\sA$
the countably generated Hilbert module $H_\sA$ is not necessarily involutive
since $(a_j)_j\mapsto (a_j^*)_j$ is not necessarily bounded.
(Except for the trivial commutative case) we do not know of other
interesting involutive Hilbert $C^*$--modules.

We believe that Theorem \plref{t:InvHMPureState} extends to
full involutive Hilbert $C^*$--modules. But in the lack of good
examples we do not follow this path any further and leave
the details to the reader.
\newcommand{\Dmax}{D_{\max}}
\newcommand{\Dmin}{D_{\min}}
\newcommand{\Dmaxmin}{D_{\maxmin}}
\newcommand{\Tmax}{T_{\max}}
\newcommand{\Tmin}{T_{\min}}
\newcommand{\Tmaxmin}{T_{\maxmin}}
\newcommand{\TLa}{T_{\gL}}
\newcommand{\regL}{\reg(\gL)}
\newcommand{\regiL}{\reg_\infty(\gL)}
\newcommand{\sisuL}{\ssupp(\gL)}
\newcommand{\sisurL}{\ssupp_r(\gL)}
\section{Examples of non--regular operators} \label{s:NRO}  
In this section we will recast in a slightly more general context
the known constructions of nonregular operators \cite[Chap. 9]{Lan:HCM}, \cite{Pal:ROH}.

Fix a separable Hilbert space and a symmetric closed operator $D$ with deficiency
indices $(1,1)$. E.g.  $H=L^2[0,1]$, 
\[\dom(D):=H^1_0[0,1]=\bigsetdef{f\in L^2[0,1]}{f'\in L^2[0,1], \, f(0)=f(1)=0}\]
and $Df:=-i f'$ will do.

As usual we put $\Dmin:=D, \Dmax:=D^*$. The domain $\dom(\Dmax)$ is a Hilbert space in its
own right with respect to the graph scalar product (cf. \eqref{eq:116}) and there
are two normalized vectors $\phi_\pm$ such that
\begin{equation}\label{eq:119}  
\begin{split}
          \Dmax \phi_\pm &= \pm\, i \phi_\pm,  \\
	  \dom(\Dmax)    &= \dom(\Dmin)\oplus \C\, \phi_+\oplus \C \, \phi_-,
\end{split}	  
\end{equation}
where the orthogonal sum and the normalization of $\phi_\pm$ are understood
with respect to the graph scalar product of $\Dmax$. Therefore,  
\begin{equation}\label{eq:130}   
         1= \| \phi_\pm\|_D^2 = \|\phi_\pm\|^2 + \|\pm i \phi_\pm\|^2 = 2
         \|\phi_\pm\|^2,\quad 
         \|\phi_\pm\| = \frac{1}{\sqrt{2}}.
\end{equation}
We introduce two continuous linear functionals
\begin{equation}\label{eq:120}  
   \ga_\pm:\dom(\Dmax)\longrightarrow \C,\quad \ga_\pm(\xi):=\inn{\phi_\pm,\xi}_{D}. 
\end{equation}
With these notations the selfadjoint extensions of $D$ are parametrized by $\gl\in S^1$: 
for $\gl\in S^1$ the operator $D_\gl$ is $\Dmax$ restricted to
\begin{equation}\label{eq:121}  
    \dom(D_\gl)=\bigsetdef{\xi\in\dom(\Dmax)}{ \ga_+(\xi)= \gl \, \ga_-(\xi)}.
\end{equation}
With
\begin{equation}\label{eq:131}  
 \eta_\gl:=\frac{1}{\sqrt{2}}\bigl( \gl \phi_+ + \phi_-\bigr), \quad 
 \eta_\gl^\perp:=\frac{1}{\sqrt{2}}\bigl( \phi_+ - \ovl{\gl} \phi_-\bigr),  
\end{equation}
we have
\begin{equation}\label{eq:132}  
	  \dom(\Dmax)    = \dom(\Dmin)\oplus \C\, \eta_\gl\oplus \C \,
   \eta_\gl^\perp  = \dom(D_\gl) \oplus \C \, \eta_\gl^\perp,
\end{equation}
hence $\xi\in \dom(\Dmax)$ lies in $\dom(D_\gl)$ iff $\xi\perp \eta_\gl^\perp$
with respect to the graph scalar product, equivalently if
\begin{equation}\label{eq:133}  
            \inn{\xi, \Dmax(\gl\phi_++\phi_-)}=\inn{\Dmax \xi, \gl \phi_++\phi_-}.    
\end{equation}

Next let $X$ be a locally compact Hausdorff space, $C_0(X)$ the $C^*$--algebra of continuous functions which vanish at infinity.
$E:=C_0(X,H)=H\hot_{C_0(X)} C_0(X)$ is the standard Hilbert $C^*$--module over $C_0(X)$ modeled on $H$
with inner product
\begin{equation}\label{eq:122}  
 \inn{f,g}(x):= \inn{f(x),g(x)}_H.
\end{equation}
We are now going to introduce semiregular operators $\Tmin, \Tmax$ as
follows: let $\dom(\Tmaxmin):=C_0(X,\dom(\Dmaxmin))=\dom(\Dmaxmin)\hot_{C_0(X)}
C_0(X)$. These are Hilbert $C^*$--modules and we have natural continuous
inclusions
\begin{equation}\label{eq:123}  
    \dom(\Tmin)\hookrightarrow \dom(\Tmax)\hookrightarrow E.
\end{equation}
For $f\in \dom(\Tmaxmin)$ put $(\Tmaxmin f)(x):=\Dmaxmin\bigl(f(x)\bigr)$.

\begin{lemma}\label{l:TmaxminRegular} $\Tmaxmin$ are closed regular operators in $E$ with
 $\Tmax^*=\Tmin,\, \Tmin^*=\Tmax$. Furthermore, the natural inner
 products on $C_0(X,\dom(\Dmaxmin))$ coincide with the graph inner products
 of $\Tmaxmin$. Hence by Prop. \plref{p:GraphHilbertModule} the inclusion maps \eqref{eq:123}
 are adjointable.
\end{lemma}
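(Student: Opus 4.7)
My plan is to transfer the Hilbert space theory of $D_{\max/\min}$ to $T_{\max/\min}$ by pointwise evaluation in $x\in X$, exploiting that both the $C_0(X)$-valued inner product on $E=C_0(X,H)$ and the action of $T_{\max/\min}$ are defined fibrewise. The first step is the identity $\inn{f,g}_T(x)=\inn{f(x),g(x)}_H+\inn{Df(x),Dg(x)}_H$ for $f,g\in C_0(X,\dom(D_{\max/\min}))$, which shows that the graph inner product of $T_{\max/\min}$ agrees with the natural $C_0(X)$-valued inner product on $\dom(D_{\max/\min})\hot_{C_0(X)}C_0(X)$. Since $\dom(D_{\max/\min})$ equipped with its graph inner product is a Hilbert space, the latter tensor product is a Hilbert $C^*$-module, so $(\dom(T_{\max/\min}),\inn{\cdot,\cdot}_T)$ is already complete and $T_{\max/\min}$ are closed. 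Adjointability of the inclusions \eqref{eq:123} will then follow automatically from Proposition \plref{p:GraphHilbertModule} once regularity is established.

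The easy inclusions $T_{\min}\subset T_{\max}^*$ and $T_{\max}\subset T_{\min}^*$ follow by pointwise evaluation, using the Hilbert space identities $D_{\max}^*=D_{\min}$ and $D_{\min}^*=D_{\max}$. For the reverse $T_{\min}^*\subset T_{\max}$, given $g\in\dom(T_{\min}^*)$ and $x_0\in X$, I would choose $\varphi\in C_c(X)$ with $\varphi(x_0)=1$, pick any $\xi\in\dom(D_{\min})$, and plug the section $f(x):=\varphi(x)\xi$ into the defining identity $\inn{T_{\min}f,g}=\inn{f,T_{\min}^*g}$; evaluation at $x_0$ yields $\inn{D_{\min}\xi,g(x_0)}_H=\inn{\xi,(T_{\min}^*g)(x_0)}_H$. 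Since $\xi$ is arbitrary, $g(x_0)\in\dom(D_{\min}^*)=\dom(D_{\max})$ with $D_{\max}g(x_0)=(T_{\min}^*g)(x_0)$, and continuity/decay of the right-hand side then place $g$ in $C_0(X,\dom(D_{\max}))=\dom(T_{\max})$. The symmetric argument gives $T_{\max}^*\subset T_{\min}$.

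For regularity, by Proposition \plref{charregself} it suffices to show that $I+T_{\min}^*T_{\min}$ has dense range, but I will produce an explicit right inverse. Given $f\in C_0(X,H)$, set $g(x):=(I+D_{\min}^*D_{\min})^{-1}f(x)$. The operators $(I+D_{\min}^*D_{\min})^{-1}$ and $D_{\min}(I+D_{\min}^*D_{\min})^{-1}$ are bounded on $H$, so the maps $g(\cdot)$, $D_{\min}g(\cdot)$ and $D_{\min}^*D_{\min}g(\cdot)=f-g$ are all continuous on $X$ and vanish at infinity. Hence $g\in\dom(T_{\min})$, $T_{\min}g\in C_0(X,\dom(D_{\max}))=\dom(T_{\max})\subset\dom(T_{\min}^*)$ by the adjoint relation just proved, and $(I+T_{\min}^*T_{\min})g=f$. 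Regularity of $T_{\max}=T_{\min}^*$ is then immediate from Lemma \plref{l:LanceCorrected}.

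The only mildly delicate step is the recovery of $g(x_0)$ in the adjoint computation: one must produce, for each $x_0\in X$ and each prescribed $\xi\in\dom(D_{\min})$, a section $f\in\dom(T_{\min})$ taking the value $\xi$ at $x_0$. Local compactness of $X$ enters exactly through the existence of the cutoff $\varphi$; everywhere else the argument is a straightforward pointwise translation of the corresponding Hilbert space facts for $D_{\max/\min}$.
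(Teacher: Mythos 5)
Your proposal is correct and follows essentially the same route as the paper: identify the graph inner product of $\Tmaxmin$ with the natural fibrewise inner product on $C_0(X,\dom(\Dmaxmin))$ to get closedness, compute the adjoints by pointwise evaluation against cutoff sections, and verify regularity by exhibiting the explicit preimage $x\mapsto (I+\Dmax\Dmin)^{-1}f(x)$. The only cosmetic differences are that you make explicit the test sections $\varphi(x)\xi$ that the paper's adjoint computation uses implicitly, and that you obtain regularity of $\Tmax$ from that of $\Tmin$ via Lemma \plref{l:LanceCorrected} instead of writing down the second resolvent directly (note also that dense range of $I+T^*T$ is the definition of regularity here, so the appeal to Proposition \plref{charregself} is superfluous).
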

\begin{proof} Certainly for $f,g\in C_0(X,\dom(\Dmax))$ we have
\begin{equation}\label{eq:127}  
 \begin{split}
 \inn{f,g&}_{C_0(X,\dom(\Dmax))}(x)= \inn{f(x),g(x)}_{\dom(\Dmax)}\\
          & =\inn{f(x),g(x)}_H+ \inn{\Dmax (f(x)), \Dmax (g(x))}_H \\
          & = \inn{f,g}_E(x) + \inn{\Tmax f, \Tmax g}_E(x)\\
          & = \inn{f,g}_{\Tmax}(x),
         \end{split}          
\end{equation}
proving the claim about graph inner products. Since $C_0(X,\dom(\Dmaxmin))$ are
Hilbert $C^*$--modules this also shows that $\Tmaxmin$ are closed operators.

If $f\in\dom(\Tmin^*)$ then for each $g\in\dom(\Tmin)=C_0(X,\dom(\Dmin))$ and
each $x\in X$
\begin{equation}\label{eq:128}  
 \inn{(\Tmin^* f)(x),g(x)}= \inn{f(x),\Dmin\bigl( g(x)\bigr)},
\end{equation}
hence $f(x)\in\dom(\Dmax)$ and $\Dmax\bigl(f(x)\bigr)=(\Tmin^*f)(x)$. This
proves that $f\in\dom(\Tmax)$ and $\Tmax f=\Tmin^*f$. This argument
proves $\Tmin^*\subset \Tmax$. The inclusion $\Tmax\subset\Tmin^*$ is obvious and hence
we have equality. The equality $\Tmax^*=\Tmin$ now follows similarly.

To prove regularity we only have to note that for given $f\in E$
the elements defined by $g_1(x):=(I+\Dmax \Dmin)^{-1}f(x)$ resp.
$g_2(x):=(I+\Dmin \Dmax)^{-1}f(x)$ are in $E$ and even more
lie in the domain of $\Tmax\Tmin$ resp. $\Tmin\Tmax$ and
$(I+\Tmax\Tmin)g_1=f$ resp. $(I+\Tmin\Tmax)g_1=f$.
\end{proof}

We are now going to study semiregular extensions $\Tmin\subset \TLa \subset \Tmax$ 
which depend on a Borel function
$\gL:X\longrightarrow S^1$. For a Borel function $\gL$ we let $\TLa$
be the operator $\Tmax$ restricted to
\begin{equation}\label{eq:124}  
 \dom(\TLa):=\bigsetdef{f\in\dom(\Tmax)}{ \ga_+\circ f = \Lambda \cdot
 (\ga_-\circ f)}.
\end{equation}
$\TLa$ is a closed operator, $\Tmin\subset\TLa\subset \Tmax$, hence
$\TLa^*\supset \Tmax^*=\Tmin$. Thus $\TLa$ is a semiregular operator.
In view of Theorem \plref{t:Pure} for characterizing the regularity of $\TLa$
it suffices to study its localizations $\TLa^p$ with respect to the points
$p\in X$ (i.e. the pure states on $C_0(X)$). As a preparation we define the
following subsets of $X$ depending on the function $\Lambda$:
\begin{align}
      \regL&:=\bigsetdef{p\in X}{\gL \text{ continuous in a neighborhood
      of } p},\\
      \sisuL&:= X\setminus \reg(\gL).
\end{align}      
$\regL$ is the largest open subset of $X$ on which $\gL$ is
continuous, $\sisuL$ is the closed singular support of $\gL$.
We furthermore distinguish two kinds of points in the common boundary 
$\pl \regL = \pl \sisuL$. For a point in $\pl \regL$ we say
that $p\in \regiL$ if there exists an open neighborhood $U$ of
$p$ and a continuous function $\tilde \gL:U\to S^1$ such that
\begin{equation}\label{eq:134}  
    \tilde \gL \restriction U\cap \regL = 
    \gL \restriction U\cap \regL.
\end{equation}
For these $p$ the limit
\begin{equation}\label{eq:125}  
 \tilde\Lambda(p):=\lim\limits_{q\to p,\; q\in \regL} \Lambda(q)\in S^1
\end{equation}
exists and hence the value $\tilde\gL(p)\in S^1$ is uniquely determined. 
However, the existence of the limit \eqref{eq:125} does in general not imply
that $p\in\regiL$.
Finally, 
\begin{equation}\label{eq:135}  
    \sisurL:=\bigl(\pl\sisuL\bigr)\setminus \regiL
\end{equation}
denotes the complement of $\regiL$ in $\pl\sisuL$.

\begin{lemma}\label{l:TLambdap}
 The localizations of $\TLa$ and $\TLa^*$ with respect to pure states 
 are given as follows:
\begin{equation}\label{eq:126}  
 \TLa^p:=\begin{cases} \Dmin, & p\in\ssupp \gL,\\
                       D_{\gL(p)}, & p\in \reg \gL.
         \end{cases}
\end{equation}
\begin{equation}\label{eq:126a}  
 (\TLa^*)^p:=\begin{cases} \Dmax, & p\in(\sisuL)^\circ,\\
                       D_{\gL(p)}, & p\in \regL,\\
                       D_{\tilde \gL(p)}, & p\in \reg_\infty \gL,\\
                       \Dmin, & p\in \ssupp_r \gL.
         \end{cases}
\end{equation}
\end{lemma}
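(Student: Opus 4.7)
The plan is to determine $\iota_p(\dom(T_\Lambda))$ and $\iota_p(\dom(T_\Lambda^*))$ as subspaces of $\dom(\Dmax)$, check that they are already closed in the graph norm, and then read off the localizations as $\Dmax$ restricted to these closed subspaces. Under the identifications $E^p\cong H$ (with $\iota_p(f)=f(p)$) and $\dom(T_\Lambda)^p\cong\overline{\iota_p(\dom(T_\Lambda))}\subset\dom(\Dmax)$ in the graph norm (as in the proof of Theorem \plref{t:CoreCriterion}), this is all that needs to be done. The one recurring tool is that for any $f\in\dom(\Tmax)=C_0(X,\dom(\Dmax))$ the two scalar functions $\ga_\pm\circ f:X\to\C$ are continuous, since $\ga_\pm$ are bounded linear functionals on the graph Hilbert space $\dom(\Dmax)$.

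For \eqref{eq:126}, the inclusion $\iota_p(\dom(T_\Lambda))\subset\dom(D_{\Lambda(p)})$ is just the pointwise boundary condition at $p$. If $p\in\regL$, I would take $\xi=\xi_0+b\Lambda(p)\phi_++b\phi_-\in\dom(D_{\Lambda(p)})$ and a bump $\chi\in C_c(X)$ with $\chi(p)=1$ supported in a neighborhood of continuity of $\Lambda$, then set $f(q):=\chi(q)\xi_0+b\chi(q)\bigl(\Lambda(q)\phi_++\phi_-\bigr)$; this $f$ lies in $\dom(T_\Lambda)$ globally and satisfies $f(p)=\xi$. If $p\in\sisuL$, the decisive observation is that $\ga_-(f(p))\neq 0$ would, by continuity, force $\ga_-(f(q))\neq 0$ on an open neighborhood $U$ of $p$, so that $\Lambda=(\ga_+\circ f)/(\ga_-\circ f)$ would exhibit $\Lambda$ as continuous on $U$, contradicting $p\in\sisuL$. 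Hence $\ga_-(f(p))=0$ and then also $\ga_+(f(p))=\Lambda(p)\ga_-(f(p))=0$, so $f(p)\in\dom(\Dmin)$; the reverse inclusion $\dom(\Dmin)\subset\iota_p(\dom(T_\Lambda))$ is obvious via $f=\chi\xi$ with $\xi\in\dom(\Dmin)$.

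For \eqref{eq:126a}, the adjoint relation reduces, after pointwise inspection, to $g\in\dom(\Tmax)$ together with the pointwise vanishing of the boundary form $B(g(q),\xi):=\inn{g(q),\Dmax\xi}-\inn{\Dmax g(q),\xi}$ on $\iota_q(\dom(T_\Lambda))$ for every $q\in X$. A direct calculation in the basis $\{\phi_\pm\}$, using the normalizations \eqref{eq:130}, gives $B(\eta,\xi)=i\bigl(\ga_+(\xi)\overline{\ga_+(\eta)}-\ga_-(\xi)\overline{\ga_-(\eta)}\bigr)$, which is non-degenerate on $\dom(\Dmax)/\dom(\Dmin)\cong\C\phi_+\oplus\C\phi_-$. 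Combined with \eqref{eq:126} this says: on $\sisuL$ the condition is vacuous, while on $\regL$ it forces $g(q)\in\dom(D_{\Lambda(q)})$. I would then evaluate at the four types of points: on $(\sisuL)^\circ$ a bump supported inside $\sisuL$ realizes every $\xi\in\dom(\Dmax)$ as $g(p)$; on $\regL$ the construction from \eqref{eq:126} produces $\dom(D_{\Lambda(p)})$; on $\regiL$, continuity of $\ga_\pm\circ g$ together with $\Lambda|_{U\cap\regL}=\tilde\Lambda|_{U\cap\regL}$ forces in the limit $\ga_+(g(p))=\tilde\Lambda(p)\ga_-(g(p))$, and the bump construction with $\Lambda$ replaced by $\tilde\Lambda$ gives surjectivity onto $\dom(D_{\tilde\Lambda(p)})$; on $\sisurL$, one extracts two sequences $q_n^{(i)}\to p$ inside $\regL$ with $\Lambda(q_n^{(i)})\to\lambda_i\in S^1$, $\lambda_1\neq\lambda_2$, and passing to the limit in the constraint gives $(\lambda_1-\lambda_2)\ga_-(g(p))=0$, hence $g(p)\in\dom(\Dmin)$.

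The main obstacle is the last case: one has to show that the failure of $\Lambda$ to admit a continuous extension across $p\in\sisurL$ genuinely produces two subsequences in $\regL$ along which $\Lambda$ converges to distinct limits. This is the contrapositive of the remark that a single-valued limit $\mu:=\lim_{q\to p,\,q\in\regL}\Lambda(q)$, extended by $\tilde\Lambda(p)=\mu$ and by any sensible continuous interpolation over the residual set $(\sisuL\cap U)\setminus\{p\}$ in a sufficiently small neighborhood $U$, would assemble into a continuous $\tilde\Lambda:U\to S^1$ placing $p$ into $\regiL$. Making this precise in full generality needs a careful topological extension argument; the remaining three cases of \eqref{eq:126a} and both cases of \eqref{eq:126} reduce to the single bump-function recipe together with continuity of $\ga_\pm\circ f$.
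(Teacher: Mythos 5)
Most of your argument runs parallel to the paper's: the continuity of $\ga_\pm\circ f$, the bump functions $\varphi(q)(\gL(q)\phi_++\phi_-)$ resp. $\varphi(q)(\tilde\gL(q)\phi_++\phi_-)$, and the observation that $\ga_-(f(p))\neq 0$ forces continuity of $\gL=(\ga_+\circ f)/(\ga_-\circ f)$ near $p$ are exactly the tools used there, and your explicit boundary--form computation is a legitimate substitute for the paper's appeal to the abstract inclusion $(\TLa^*)^p\subset(\TLa^p)^*$ from Lemma \plref{l:LocalizedOperator}. The cases $p\in\regL$, $p\in(\sisuL)^\circ$ and $p\in\regiL$ are fine.

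The case $p\in\sisurL$, however, contains a genuine gap, and it is not merely the ``careful topological extension argument'' you defer. Your plan is to extract two sequences $q_n^{(i)}\to p$ in $\regL$ with $\gL(q_n^{(i)})\to\gl_i$ and $\gl_1\neq\gl_2$. But such sequences need not exist: the paper explicitly warns that the existence of the limit $\lim_{q\to p,\,q\in\regL}\gL(q)$ does \emph{not} imply $p\in\regiL$. Concretely, take $X=[0,1]$, $\sisuL=\{0\}\cup\{1/n\}_{n\geq 1}$, and $\gL\equiv e^{i/n}$ on $(1/(n+1),1/n)$: then $\gL(q)\to 1$ as $q\to 0$, so your dichotomy yields only the vacuous identity $(\gl_1-\gl_2)\ga_-(g(0))=0$, yet $0\notin\regiL$ because any continuous $\tilde\gL$ on a neighborhood of $0$ would have to take both one--sided limits $e^{i/n}$ and $e^{i/(n-1)}$ at each point $1/n$. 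Your contrapositive (``a single--valued limit at $p$ plus a sensible interpolation over the residual set assembles into a continuous extension'') fails precisely because no such interpolation exists. The correct argument, which is the one point where the paper's proof is genuinely cleverer than a limit extraction, is to use the test vector itself: if $g\in\dom(\TLa^*)$ has $\ga_-(g(p))\neq 0$, then on a neighborhood $U$ both $\ga_\pm(g(q))$ are nonvanishing and the formula
\[
 \tilde\gL(q):=\frac{\ga_+(g(q))\,|\ga_-(g(q))|}{\ga_-(g(q))\,|\ga_+(g(q))|},\qquad q\in U,
\]
is a continuous $S^1$--valued extension of $\gL\restriction U\cap\regL$ to \emph{all} of $U$, directly contradicting $p\notin\regiL$. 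You should replace your two--sequence argument by this construction (and note that the symmetric argument with $\ga_+$ in place of $\ga_-$ is needed to conclude $\ga_+(g(p))=0$ as well, so that $g(p)\in\dom(\Dmin)$).
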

\begin{proof}
We first note that $\dom(\Dmin)\subset\dom(T_{\gL,0}^p) \subset
\dom(D_{\gL(p)})$. The second inclusion follows from the definition
\eqref{eq:Tpi0}. To see the first inclusion let $\xi\in\dom(\Dmin)$. Then
the constant function $f(x):=\xi$ lies in $\dom(\Tmin)\subset\dom(\TLa)$
and $f(p)=\xi$. Since $\dom(D_{\gL(p)})/\dom(\Dmin)$ is one--dimensional
it follows that either $\dom(T_{\gL,0}^p)=\dom(\TLa^p)=\dom(\Dmin)$
or  $\dom(T_{\gL,0}^p)=\dom(\TLa^p)=\dom(D_{\gL(p)})$. Suppose
that $f\in\dom(\TLa)$ with $\ga_-(f(p))\not=0$. Then there exists
an open neighborhood $U$ of $p$ such that $\ga_-(f(q))\not=0$ for $q\in U$
and hence 
\begin{equation}\label{eq:136}  
 \Lambda\restriction U=\frac{\ga_+ \circ f}{\ga_-\circ f}\restriction U
\end{equation}
is continuous on $U$, proving $p\in \regL$. Thus if $p\not\in\regL$
we have $\dom(\TLa^p)=\dom(\Dmin)$. Continuing with $p\in\regL$ choose
a function $\varphi\in C_c(\regL)$ with $\varphi(p)=1$ and put
\begin{equation}\label{eq:137}  
 f(q):= \varphi(q) \bigl(\gL(q) \phi_++ \phi_-).
\end{equation}
Then $f\in C_0(X,\dom(\Dmax))$ with $\ga_+\circ f= \gL\cdot (\ga_-\circ f)$,
hence $f\in\dom(\TLa)$ and thus $f(p)\in \dom(T_{\gL,0}^p)$.
$\ga_-(f(p))=1$ proving $\dom(T_{\gL,0}^p)=\dom(D_{\gL(p)})$.

Next consider $\TLa^*\subset \Tmin^*=\Tmax$. The inclusion
$(T_{\gL}^*)^p\subset (T_{\gL}^p)^*$ (Lemma \plref{l:LocalizedOperator})
implies
\begin{equation}\label{eq:138}  
 \dom((T_{\gL}^*)^p)\subset \begin{cases} \dom(\Dmax), & p\in\sisuL,\\
                                      \dom(D_{\gL(p)}), & p\in\regL.
                                     \end{cases}              
\end{equation}
The construction of Eq. \eqref{eq:137} can now be adapted to prove
the claim for the case $p\in\regL$. 

If $p \in (\sisuL)^\circ$ then choose
a continuous compactly supported function $\psi \in C_c((\sisuL)^\circ)$ with
$\psi(p)=1$. Then the functions $q\mapsto \psi(q) \cd \phi_\pm$ lie in
the domain of $\dom(\TLa^*)$ since $(\ga_+ \circ f)(q) = 0 = (\ga_- \circ f)(q) =
0$ for all $q \in (\sisuL)^\circ $ and all $f \in \dom(\TLa)$. This proves that the
vectors $\phi_+$ and $\phi_-$ are contained in $\dom\big( (\TLa^*)^p\big)$.

If $p\in\pl(\regL)$ and $f\in\dom\bigl(T_{\gL}^*\bigr)$ with
$\ga_-(f(p))\not=0$ then $f\in C_0(X,\dom(\Dmax))$, hence there is an
open neighborhood $U$ of $p$ such that $\ga_-(f(q))\not=0$ for $q\in U$.
Furthermore $\ga_+(f(q))=\gL(q)\cdot \ga_-(f(q))$ for $q\in U\cap \regL$.
Thus
\begin{equation}\label{eq:152}  
 \tilde\Lambda(p):=\lim\limits_{q\to p,\; q\in \regL} \Lambda(q)
                  =\lim\limits_{q\to p,\; q\in \regL}
                  \frac{\ga_+(f(q))}{\ga_-(f(q))}\in S^1
\end{equation}
exists and thus $\ga_+(f(p))=\tilde\gL(p)\cdot \ga_-(f(p))\not=0$, too.
Thus after possibly making $U$ smaller we may assume that also $\ga_+(f(q))\not=0$
for $q$ in $U$ and hence the continuous function
\begin{equation}\label{eq:139}  
 \tilde\gL(q):=\frac{\ga_+(f(q))\;  |\ga_-(f(q))|}{\ga_-(f(q)) \;|\ga_+(f(q))|},\quad q\in U  
\end{equation}
is a continuous extension of $\gL\restriction U\cap\regL$, proving
that necessarily $p\in\regiL$. This argument proves that $(T_{\gL}^*)^p=\Dmin$
for $p\in \sisurL$. Continuing with $p\in\regiL$ we
first note that taking limits $q\to p, q\in U\cap \regL$ we see that
necessarily $\ga_+(f(p))=\tilde \gL(p)\cdot \ga_-(f(p))$ proving 
$\dom((T_\gL^*)^p)\subset \dom(D_{\tilde \gL(p)})$. To prove the converse
inclusion put (cf. \eqref{eq:137})
\begin{equation}\label{eq:140}  
 f(q):= \varphi(q) \bigl(\tilde \gL(q) \phi_++ \phi_-).
\end{equation}
Then it easily follows that $f\in\dom\bigl(\TLa^*\bigr)$.
Since $\ga_-(f(p))=1$ we conclude that indeed $(\TLa^*)^p =
D_{\tilde\gL(p)}$ and the Lemma is proved.
\end{proof}

\begin{prop}\label{p:RegT}
\begin{thmenum}
\item $\TLa$ is regular if and only if $(\sisuL)^\circ=\sisuL$. 
\item $\TLa$ is selfadjoint if and only if $\ssupp\gL=\ssupp_r\gL$.
\item $\TLa$ is selfadjoint and regular if and only if $\gL$ is continuous (i.e. $\ssupp\gL=\emptyset$).
\item $\TLa^*$ is selfadjoint and regular if and only if $\tilde \gL$ is continuous
(i.e. $\ssupp\gL=\reg_\infty\gL$).
\end{thmenum}
\end{prop}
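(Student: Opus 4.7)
The plan is to read off all four statements from the list of pointwise localizations given in Lemma \plref{l:TLambdap} by applying the Local--Global Principle (Theorem \plref{t:locglob}). Since the base algebra $C_0(X)$ is commutative, Theorem \plref{t:Pure} lets me work exclusively with pure states, and pure states of $C_0(X)$ are exactly the point evaluations $\go_p$, $p\in X$.

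First, parts (3) and (4) will be immediate from Theorem \plref{t:locglob}(2): $\TLa$ (respectively $\TLa^*$) is selfadjoint and regular iff every $T_\gL^p$ (resp.\ $(T_\gL^*)^p$) is selfadjoint. Of the four operators appearing in Lemma \plref{l:TLambdap} only $D_{\gL(p)}$ and $D_{\tilde\gL(p)}$ are selfadjoint, while $\Dmin$ and $\Dmax$ are not. Hence selfadjointness of all $T_\gL^p$ is equivalent to $\sisuL=\emptyset$, i.e.\ to $\gL$ being continuous, which is (3); and selfadjointness of all $(T_\gL^*)^p$ is equivalent to $(\sisuL)^\circ=\sisurL=\emptyset$, i.e.\ to $X=\regL\cup\regiL$ with $\tilde\gL$ continuous on $X$, which is (4).

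Next, for part (1) I would apply Theorem \plref{t:locglob}(1): $\TLa$ is regular iff $(T_\gL^*)^p=(T_\gL^p)^*$ for every pure state $p$. A direct case check using Lemma \plref{l:TLambdap} shows this equality holds at $p\in\regL$ (both sides equal the selfadjoint $D_{\gL(p)}$) and at $p\in(\sisuL)^\circ$ (both sides equal $\Dmax=\Dmin^*$), but fails at $p\in\regiL$ (where $(T_\gL^p)^*=\Dmax\neq D_{\tilde\gL(p)}=(T_\gL^*)^p$) and at $p\in\sisurL$ (where $(T_\gL^p)^*=\Dmax\neq \Dmin=(T_\gL^*)^p$). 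So regularity is equivalent to $\partial\sisuL=\regiL\cup\sisurL=\emptyset$; as $\sisuL$ is automatically closed, this is exactly $(\sisuL)^\circ=\sisuL$.

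The main obstacle is (2), since selfadjointness alone is not captured by the Local--Global Principle; I would bridge the gap directly using the separation Theorem \plref{t:sep}. A case check in the spirit of (1) shows that $T_\gL^p=(T_\gL^*)^p$ holds at every $p$ iff $(\sisuL)^\circ=\regiL=\emptyset$, that is, iff $\ssupp\gL=\ssupp_r\gL$. Trivially $T_\gL=T_\gL^*$ implies these pointwise equalities; for the converse, assume $T_\gL^p=(T_\gL^*)^p$ for all $p$. Since $T_\gL\subset T_\gL^*$, the domain $\dom(\TLa)$ is a closed $C_0(X)$-submodule of the Hilbert module $\dom(\TLa^*)$ (with $\TLa^*$-graph inner product), on which the two graph norms agree. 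If this inclusion were proper, Theorems \plref{t:sep} and \plref{t:Pure} would produce a point $p\in X$ at which the image of $\dom(\TLa)$ in $\dom(\TLa^*)^{\go_p}$ is not dense. But under the identification $\dom(\TLa^*)^{\go_p}=\dom((T_\gL^*)^p)$ from the graph-norm computation \eqref{eq:117} together with the hypothesis $\dom(T_\gL^p)=\dom((T_\gL^*)^p)$, this image is dense in $\dom(T_\gL^p)$, a contradiction. Hence $T_\gL=T_\gL^*$, establishing (2).
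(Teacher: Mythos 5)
Your proof is correct, and its overall architecture --- reading all four statements off the table of localizations in Lemma \plref{l:TLambdap} by means of the Local--Global Principle restricted to pure states (Theorems \plref{t:LocGlobPure} and \plref{t:Pure}) --- is exactly the paper's. Parts (1), (3) and (4) coincide with the published argument up to trivial reorganization: the paper obtains (3) by combining (1) and (2) rather than by a direct appeal to Theorem \plref{t:locglob}.2, and it is equally terse about (4). The genuine difference lies in the converse direction of (2), where selfadjointness is not directly covered by the Local--Global Principle. The paper argues by hand: given $f\in\dom(\TLa^*)$, the description \eqref{eq:126a} forces $f(p)\in\dom(\Dmin)$ for $p\in\sisurL=\sisuL$ and $f(p)\in\dom(D_{\gL(p)})$ for $p\in\regL$, whence $\ga_+\circ f=\gL\cdot(\ga_-\circ f)$ everywhere and $f\in\dom(\TLa)$. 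You instead run the separation machinery a second time, viewing $\dom(\TLa)$ as a closed submodule of the Hilbert $C^*$--module $\dom(\TLa^*)$ and invoking Theorems \plref{t:sep} and \plref{t:Pure} --- in effect the pure--state version of the core criterion, Theorem \plref{t:CoreCriterion} --- to upgrade the pointwise identity $\TLa^p=(\TLa^*)^p$ to the global identity $\dom(\TLa)=\dom(\TLa^*)$. Both arguments are valid; the paper's computation is more elementary and explicit at this spot, while yours is softer, reuses the general machinery already developed, and would apply verbatim to any closed symmetric operator over a commutative base algebra whose localizations at all points agree with those of its adjoint. The only step worth making explicit in your write--up is the (easy) verification that $\TLa$ is symmetric, i.e. $\TLa\subset\TLa^*$, which you use when identifying the two graph norms on $\dom(\TLa)$; the paper takes this for granted as well.
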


Hence if $\ssupp\gL=\ssupp_r\gL\not=\emptyset$ then $\TLa$
is selfadjoint and not regular (cf. \cite[Chap. 9]{Lan:HCM}).

If $\ssupp\gL=\reg_\infty\gL\not = \emptyset$ then $\TLa^*$ is
regular and selfadjoint, but $\TLa\subsetneqq \TLa^*=\TLa^{**}$
is not regular (cf. \cite{Pal:ROH}).

\begin{proof}
 1.  By Theorem \plref{t:Pure} $\TLa$ is regular if and only if 
 $(\TLa^p)^*=(\TLa^*)^p$ for all $p\in X$. 
 In view of \eqref{eq:126}, \eqref{eq:126a} this is the case if and only if 
 $(\sisuL)^\circ=\sisuL$.

 2. For $\TLa$ being selfadjoint it is necessary that $\TLa^p=(\TLa^*)^p$ for
 all $p\in X$. By Lemma \plref{l:TLambdap} the latter is only true
 if $\sisuL=\sisurL$. If that is the case let us consider $f\in\dom(\TLa^*)$.
 Then by \eqref{eq:126a} 
 $f\in C_0(X,\dom(\Dmax))$, $f(p)\in\dom(\Dmin)$ if $p\in\sisurL$, and
 $f(p)\in \dom(D_{\gL(p)})$ if $p\in\regL$. But then, since
 $(\sisuL)^\circ=\emptyset$
 we have $\ga_+\circ f= \gL\cdot\ga_-\circ f$, thus $f\in\dom(\TLa)$.

 3. This is just the obvious combination of 1. and 2.
 
 4. This follows as 1. and 2. by applying Lemma \plref{l:TLambdap} to $\TLa^*$.
 \end{proof}

\begin{prop}\label{p:NonRegularFaithfulState}
Assume that $\ovl{\regL}=X$ and let $\mu$ be a probability measure on $X$ with
\begin{thmenum}
\item $\supp \mu = \overline{\regL}=X,$
\item $\pl(\regL)$ is a $\mu$--null set.
\end{thmenum}
Then the representation $\pi_\mu$ is faithful, $\TLa^\mu =(\TLa^\mu)^*$, and
\begin{align}  
  \dom(\TLa^\mu)&=\bigsetdef{f\in \Tmax^\mu}{ f(x) \in\dom(D_{\gL(x)})
  \text{ for $\mu$-a.e. } x\in X}\label{eq:141a}\\
                &=\bigsetdef{f\in \Tmax^\mu}{ \inn{f, \eta_{\gL}^\perp}_{T,\mu}=0},
                \label{eq:141b}
\end{align}
where $\eta_{\gL}^\perp(x):= \eta_{\gL(x)}^\perp$ (cf. \eqref{eq:131}). 

However, if $\pl(\regL)=\ssupp_r\gL\not=\emptyset$ then $\TLa$ is selfadjoint but not
regular while if $\pl(\regL)=\reg_\infty\gL\not=\emptyset$ then $\TLa$ is neither
selfadjoint nor regular but $\TLa^\mu =(\TLa^\mu)^*$ for the \emph{faithful}
representation $\pi_\mu$.
\end{prop}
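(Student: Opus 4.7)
The plan is to identify the localization $E^\mu$ with the direct integral $\int_X^\oplus H\,d\mu(x)\simeq L^2(X,\mu;H)$, so that $i_\mu(f)$ is the $\mu$-equivalence class of $f$ and $C_0(X)$ acts on $L^2(X,\mu)$ by pointwise multiplication. Assumption (a) that $\supp\mu=X$ immediately yields faithfulness of $\pi_\mu$: a nonzero $a\in C_0(X)$ is nonzero on an open set, which intersects $\supp\mu$ nontrivially. Because $\overline{\regL}=X$ forces $(\sisuL)^\circ=\emptyset$, one has $\sisuL=\partial\regL$, which is $\mu$-null by (b); hence $\gL$ is continuous $\mu$-a.e. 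A standard closedness plus density argument then identifies $\Tmax^\mu$ with the direct integral $\int^\oplus \Dmax\,d\mu(x)$, whose domain consists of those $f\in L^2(X,\mu;H)$ with $f(x)\in\dom(\Dmax)$ $\mu$-a.e.\ and $\int\|\Dmax f(x)\|^2\,d\mu<\infty$.

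For the characterization of $\dom(\TLa^\mu)$, the forward inclusion is easy: every $f\in\dom(\TLa)$ satisfies $\ga_+\circ f=\gL\cdot(\ga_-\circ f)$ on $\regL$ (of full $\mu$-measure), and this pointwise condition persists under graph-$L^2$ convergence by extracting an a.e.\ convergent subsequence and invoking closedness of each $D_{\gL(x)}$. The main obstacle is the reverse inclusion: given $f$ satisfying $f(x)\in\dom(D_{\gL(x)})$ a.e.\ with $\int\|\Dmax f\|^2\,d\mu<\infty$, I must approximate $f$ in the $\Tmax^\mu$-graph norm by elements of $\dom(\TLa)$. I will use a locally finite open cover $\{U_i\}$ of $\regL$ on which $\gL$ is continuous, and a subordinate partition of unity $\{\chi_i\}\subset C_c(\regL)$, to reduce to approximating each $\chi_i f$. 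On $U_i$ the section $\eta_\gL(x):=(\gL(x)\phi_++\phi_-)/\sqrt{2}$ is continuous into $\dom(\Dmax)$, and using \eqref{eq:132} I decompose $\chi_i f=\xi_i+\alpha_i\,\eta_\gL$ with $\xi_i(x)\in\dom(\Dmin)$ and $\alpha_i(x)\in\C$ both $L^2$ (in the appropriate graph norms and supported in $U_i$). Approximating $\alpha_i$ by compactly supported continuous functions on $U_i$ and $\xi_i$ by elements of $C_c(U_i,\dom(\Dmin))$ in the $\Dmin$-graph $L^2$-norm produces approximants lying in $\dom(\TLa)$ that converge to $\chi_i f$ in the $\Tmax^\mu$-graph norm by dominated convergence, using that $\gL|_{U_i}$ is uniformly bounded. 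The equivalent formulation via $\inn{f,\eta_\gL^\perp}_{T,\mu}=0$ is then immediate from \eqref{eq:132}, since pointwise $f(x)\in\dom(D_{\gL(x)})$ is equivalent to $f(x)\perp\eta_{\gL(x)}^\perp$ in the $\Dmax$-graph inner product, and integrating this over $\mu$ gives precisely the stated condition.

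Selfadjointness $\TLa^\mu=(\TLa^\mu)^*$ then follows from the standard fact that a direct integral of a measurable family of selfadjoint operators is selfadjoint. For the concluding assertion, I combine the above with Proposition \plref{p:RegT}. The hypothesis $\overline{\regL}=X$ gives $\sisuL=\partial\regL$. In the case $\partial\regL=\sisurL\neq\emptyset$, then $\ssupp\gL=\sisurL$ and \plref{p:RegT}(2) makes $\TLa$ selfadjoint, while $(\sisuL)^\circ=\emptyset\neq\sisuL$ forbids regularity by \plref{p:RegT}(1). In the case $\partial\regL=\regiL\neq\emptyset$, one has $\ssupp\gL=\regiL\neq\emptyset=\sisurL$, so by \plref{p:RegT}(2,4) the operator $\TLa$ is neither selfadjoint nor regular (while $\TLa^*$ is). In both cases the first part of the proposition nevertheless furnishes the \emph{faithful} representation $\pi_\mu$ under which the localization $\TLa^\mu$ is selfadjoint, as asserted.
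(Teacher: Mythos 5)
Your proof is correct, but it takes a genuinely different route from the paper's. The paper restricts everything to the open set $\Omega=\mathrm{reg}(\Lambda)$ on which $\Lambda$ is continuous: there Proposition \ref{p:RegT} makes the restricted operator $T_\Theta$ selfadjoint and regular, the already-established Local--Global Principle (the implication from regularity to selfadjointness of every localization) makes its localization $T_\Theta^\sigma$ selfadjoint, and the inclusion $T_\Theta^\sigma\subset T_\Lambda^\mu$ --- valid precisely because $\partial\,\mathrm{reg}(\Lambda)$ is $\mu$-null, so that $L^2(\Omega,\sigma;H)\cong L^2(X,\mu;H)$ --- squeezes the symmetric operator $T_\Lambda^\mu$ between a selfadjoint operator and its adjoint; the domain identities are then read off. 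You instead compute $T_\Lambda^\mu$ from scratch as the direct integral $\int_X^\oplus D_{\Lambda(x)}\,d\mu(x)$, proving the domain identity \eqref{eq:141a} by a two-sided inclusion (the reverse inclusion via a partition of unity subordinate to a cover of $\mathrm{reg}(\Lambda)$ and the orthogonal decomposition \eqref{eq:132}), and then invoke the standard selfadjointness of a direct integral of a measurable field of selfadjoint operators. Your route bypasses the Local--Global Principle entirely for this step and actually supplies the proof of \eqref{eq:141a}--\eqref{eq:141b} that the paper dismisses as ``obvious''; the price is the imported direct-integral theorem, for which you should at least record that $x\mapsto(D_{\Lambda(x)}+i)^{-1}$ is a measurable field (it is, since $\lambda\mapsto(D_\lambda+i)^{-1}$ is norm-continuous on $S^1$ and $\Lambda$ is Borel), and you should note that elements of the algebraic localization $\dom(T_\Lambda)\otimes_{C_0(X)}L^2(X,\mu)$ are finite combinations $\sum_j h_j f_j$ rather than single sections of $\dom(T_\Lambda)$ --- harmless for your forward inclusion since each $\dom(D_{\Lambda(x)})$ is a linear subspace. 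The concluding dichotomy via Proposition \ref{p:RegT} is handled identically in both arguments.
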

As an example for the last situation we could concretely
take $X=[0,1]$, $\Lambda:[0,1]\rightarrow S^1$ such that
 $\gL\restriction (0,1] $ is continuous but discontinuous at $0$ (with
not existing limit at $0$ in the first case and existing limit at $0$ in the
second), and $\mu(f):=\int_0^1 f.$

This example shows that the answer to the following question, which the attentive reader
might have hoped to be affirmative, is \emph{negative}:

\begin{problem}
Does the essential selfadjointness of the localized unbounded operator of
the form $T^\pi$ on $H_\pi$ imply the selfadjointness and regularity of the 
closed symmetric operator $T$ when the presentation $\pi$ is 
\textbf{faithful}?
\end{problem}

One might however still be tempted to think that the above statement is true
when the faithful representation is the atomic representation of $\sA$, cf.
Conjectures \plref{c:one}--\plref{c:three} and Theorem \plref{t:LocGlobPure}.

\begin{proof}[Proof of Prop. \plref{p:NonRegularFaithfulState}]
 Since $\supp \mu =X$ it follows that the representation $\pi_\mu$
 of $C(X)$ by multiplication operators on $L^2(X,\mu;H)$ is faithful.

Now, let $\Om := \regL$ and let $\Theta : \Om \to S^1$ denote the restriction of $\Theta$ to $\Om$. We can then consider the operator $T_{\Theta}$ acting on the Hilbert module $C_0(\Om, H)$ together with the localization $T_{\Theta}^{\si}$. Here $\si$ denotes the restriction of the probability measure $\mu$ to $\Om$. We then get from Proposition \ref{p:RegT} that $T_{\Theta}$ is selfadjoint and regular and hence from Theorem \ref{t:LocGlobPure} that the localization $T_{\Theta}^{\si}$ is selfadjoint. The selfadjointness of the localization $T_{\Lambda}^\mu$ now follows by noting that we have the inclusion $T_{\Theta}^{\si} \su T_{\Lambda}^\mu$ under the identification of Hilbert spaces $L^2(\Om,\si;H) \cong L^2(X,\mu;H)$. Here we use that $\mu(\pa \regL) = 0$.

The identities in Eq. \eqref{eq:141a} and Eq. \eqref{eq:141b} are now obvious.
\end{proof}

\section{Sums of selfadjoint regular operators}\label{s:SSRO} 
\marginpar{give a bit more motivation, KK product}
Let us consider a Hilbert $C^*$--module $E$ over some $C^*$--algebra  
$\sA$. Furthermore, let $S$ and $T$ be two selfadjoint and regular operators with domains $\dom (S) \su
E$ and $\dom (T) \su E$ respectively. The main purpose of this section is then
to study the selfadjointness and regularity of the sum operator
\begin{equation}\label{eq:DefD}
D := \begin{pmatrix}
0 & S - i\, T  \\
S + i\, T & 0 
\end{pmatrix}, \quad  \dom (D) = \big( \dom (S) \cap \dom (T) \big)^2\subset
E\oplus E.
\end{equation}
As mentioned in the introduction this question is essential when dealing with
the Kasparov product of unbounded modules. To be more precise we shall see
that the following three assumptions are sufficient for the above sum to be a
selfadjoint and regular operator.

\begin{assu}\label{sumassu}
We will assume that we have a dense submodule $\sE \su E$ such that the
following conditions are satisfied:
\begin{enumerate}
\item The submodule $\sE \su \dom (T)$ is a core for $T$.
\item We have the inclusions
\begin{equation}\label{eq:Assu1}
(S - i \cd \mu)^{-1}(\xi) \in \dom (S) \cap \dom (T)
\quad \emph{and} \quad T(S - i \cd \mu )^{-1}(\xi) \in \dom (S)
\end{equation}
for all $\mu \in \R\sm \{0\}$ and all $\xi \in \sE$.
\item The module homomorphism
\begin{equation}\label{eq:Assu2}
[S,T] \cd (S - i \cd \mu)^{-1} : \sE \to E
\end{equation}
extends to a bounded ($\sA$-linear) operator $X_\mu : E \to E$ between Hilbert
$C^*$--modules for all $\mu \in \R\sm \{0\}$.
\end{enumerate}
\end{assu}

We will start by proving that the sum operator is selfadjoint. Afterwards we
shall apply the Local--Global Principle to prove that the sum operator is
regular as well. The following Lemma will be used several times:
\begin{lemma}\label{l:StrongConv}
Let $P$ be a selfadjoint regular operator in the Hilbert $C^*$--module $E$.
Let $(f_n)_{n\in\N}\subset C_\infty(\R)$ be a sequence of functions (cf.
Section \plref{ss:SRO}) such that 
\begin{thmenum}
\item $\sup_n \| f_n \|_\infty<\infty,$
\item $(f_n)_{n\in \N}$  converges to $f\in C_\infty(\R)$ uniformly on
compact subsets of $\R$. 
\end{thmenum}
Then $f_n(P)$ converges \emph{strongly} to $f(P)$ as $n\to \infty$.
\end{lemma}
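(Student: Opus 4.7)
The plan is to subtract $f$ to reduce to the case $f_n \to 0$ strongly, and then to use density together with the uniform bound on $\|f_n(P)\|$ afforded by the functional calculus. Recall that for a selfadjoint regular operator the functional calculus $C_\infty(\R) \to \B(E)$ is a contractive $*$--homomorphism, so setting $g_n := f_n - f \in C_\infty(\R)$ gives a sequence that is still uniformly bounded, still converges to $0$ uniformly on compact subsets of $\R$, and satisfies $g_n(P) = f_n(P) - f(P)$. I may therefore assume $f = 0$ from the outset and prove $f_n(P)x \to 0$ for every $x \in E$.

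Next I would exploit the uniform bound $\|f_n(P)\| \le \sup_n \|f_n\|_\infty =: M$, which reduces the task to checking strong convergence on a dense subspace. The natural choice is $\dom(P^2)$, which coincides with the range of $(I + P^2)^{-1} \in \B(E)$. Density here is not automatic and genuinely uses the regularity of $P$: by \cite[Lemma 9.1]{Lan:HCM} the subspace $\dom(P^*P) = \dom(P^2)$ is a core for $P$, hence dense in $E$. This is the only step where the Hilbert $C^*$--module setting differs substantively from the Hilbert space setting and is, in my view, the main conceptual step of the proof.

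On this dense subspace I would write, for $x = (I+P^2)^{-1} z$ with $z \in E$,
\[
f_n(P)\, x \;=\; f_n(P)\,(I+P^2)^{-1} z \;=\; h_n(P)\, z,
\]
where $h_n(t) := f_n(t)/(1+t^2) \in C_\infty(\R)$ by multiplicativity of the functional calculus. Splitting $\R$ into a large interval $[-R,R]$ and its complement handles both regimes: on the tails $|t| \ge R$ one has the uniform estimate $|h_n(t)| \le M/(1+R^2)$, which can be made arbitrarily small by choice of $R$; on $[-R,R]$ the hypothesis that $f_n \to 0$ uniformly on compacts forces $\sup_{|t| \le R}|h_n(t)| \to 0$. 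Hence $\|h_n\|_\infty \to 0$, and contractivity of the functional calculus gives $\|f_n(P) x\| \le \|h_n\|_\infty \cdot \|z\| \to 0$.

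Finally, a standard three--epsilon argument finishes the proof: given $x \in E$ and $\eps > 0$, approximate $x$ by some $y \in \dom(P^2)$ with $\|x - y\| < \eps/(2M)$; then $\|f_n(P) x\| \le M \|x-y\| + \|f_n(P) y\|$, and the previous paragraph makes the second term eventually $< \eps/2$. I do not foresee further obstacles; the argument is essentially the Hilbert space proof, the only vigilance required being over the density of $\dom(P^2)$, which is where the $C^*$--module hypothesis of regularity (as opposed to mere semiregularity) is actually used.
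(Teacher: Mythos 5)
Your proof is correct and follows essentially the same route as the paper's: reduce to a dense domain reached by a resolvent-type operator, absorb the resolvent function (which vanishes at infinity) into $f_n$ to upgrade compact-uniform convergence to norm convergence in $C_\infty(\R)$, and conclude by uniform boundedness. The only (immaterial) difference is that the paper works on $\dom(P)$ with $\varphi(t)=(t+i)^{-1}$ where you work on $\dom(P^2)$ with $(1+t^2)^{-1}$.
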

\begin{proof} Consider first $x\in\dom(P)$. Then with $\varphi(t):=(t+i)\ii$
 and $y:= (P+ i ) x$ we have $x=\varphi(P)y$. Since $\varphi$ vanishes
 at $\infty$, $f_n \varphi \to f\varphi$ \emph{uniformly}, in particular
 \[
 f_n(P)x = \bigl((f_n\varphi)(P)\bigr) y \longrightarrow f(P) x.
 \]
 The sequence $(f_n(P))_{n\in\N}$ therefore converges
 strongly to $f(P)$ on the dense submodule $\dom(P)$. Since the sequence 
 is uniformly norm bounded this implies the strong convergence.
\end{proof} 
\subsection{Selfadjointness}\label{ss:selfadj}  
Let us assume that the conditions in Assumption \ref{sumassu} are satisfied.

We begin by noting that the commutator $[S,T]$ is densely defined. Indeed, the
domain of $[S,T]$ contains the dense submodule $(S-i)^{-1}(\sE)$. As a
consequence we get that the bounded operators $X_\mu, \,\mu \in \R\sm \{0\},$
defined in Eq. \eqref{eq:Assu2} are adjointable. The adjoint of $X_\mu$ is given by the expression
\begin{equation}\label{eq:XmuAdjoint}
(X_{\mu})^*\xi = - (S + i\cd \mu)^{-1}[S,T]\xi
\end{equation}
for all $\xi$ in the dense submodule $\dom\big( [S,T]\big) \su E$.

We continue by showing that the core $\sE$ can be replaced by the domain of
the selfadjoint and regular operator $T$.
\begin{prop}\label{coret}
The conditions in Assumption \plref{sumassu} are satisfied for $\sE =
\dom (T)$.
\end{prop}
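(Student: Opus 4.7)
The plan is to lift the three items of Assumption \ref{sumassu} from $\sE$ to $\dom(T)$ by combining a single resolvent commutator identity with the closedness of $T$. Condition (1) is automatic, since $\dom(T)$ is trivially a core for $T$. The key observation is that conditions (2) and (3) both follow from the identity
\begin{equation*}
T(S - i\mu)^{-1}\eta \;-\; (S - i\mu)^{-1}T\eta \;=\; (S - i\mu)^{-1} X_\mu \eta,
\end{equation*}
which I first establish for $\eta \in \sE$ and then extend to $\xi \in \dom(T)$ by density.

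To verify the identity on $\sE$, I would exploit Assumption \ref{sumassu}(2): the vector $(S-i\mu)^{-1}\eta$ lies in $\dom(S) \cap \dom(T)$ and $T(S-i\mu)^{-1}\eta \in \dom(S)$, while $S(S-i\mu)^{-1}\eta = \eta + i\mu(S-i\mu)^{-1}\eta$ lies in $\dom(T)$ because $\eta \in \sE \subset \dom(T)$. Thus $(S-i\mu)^{-1}\eta \in \dom([S,T])$ and, by the very definition of $X_\mu$,
\begin{equation*}
(S - i\mu)\,T(S - i\mu)^{-1}\eta \;-\; T\eta \;=\; [S,T](S-i\mu)^{-1}\eta \;=\; X_\mu \eta.
\end{equation*}
Applying $(S-i\mu)^{-1}$ yields the claimed identity on $\sE$.

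For condition (2) on $\dom(T)$, I would take $\xi \in \dom(T)$ and, using that $\sE$ is a core for $T$, pick a sequence $\xi_n \in \sE$ with $\xi_n \to \xi$ and $T\xi_n \to T\xi$. Boundedness of $(S-i\mu)^{-1}$ and of $X_\mu$ gives $(S-i\mu)^{-1}\xi_n \to (S-i\mu)^{-1}\xi$ and, by the identity just proved,
\begin{equation*}
T(S-i\mu)^{-1}\xi_n \;=\; (S-i\mu)^{-1}(X_\mu + T)\xi_n \;\longrightarrow\; (S-i\mu)^{-1}(X_\mu + T)\xi.
\end{equation*}
Closedness of $T$ (which holds since $T$ is selfadjoint and regular) then forces $(S-i\mu)^{-1}\xi \in \dom(T)$ with $T(S-i\mu)^{-1}\xi = (S-i\mu)^{-1}(X_\mu + T)\xi$. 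Since $(S-i\mu)^{-1}$ takes $E$ into $\dom(S)$, the left-hand side lies in $\dom(S)$ and $(S-i\mu)^{-1}\xi$ lies in $\dom(S)\cap\dom(T)$, which is condition (2) for $\sE = \dom(T)$. Reading the derivation of the commutator identity backwards at $\xi$ (now legitimate by what has just been shown) gives $[S,T](S-i\mu)^{-1}\xi = X_\mu \xi$, so the already bounded adjointable operator $X_\mu : E \to E$ provides the required extension, which is condition (3).

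The principal obstacle is purely bookkeeping: one must keep careful track of which domain each intermediate vector belongs to so that the commutator identity is well defined pointwise on $\sE$. Once the domain conditions are arranged, the passage from $\sE$ to $\dom(T)$ reduces to a routine application of the closed graph property of $T$ together with the uniform norm bound provided by $X_\mu$.
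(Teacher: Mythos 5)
Your proof is correct and follows essentially the same route as the paper's: approximate $\xi \in \dom(T)$ by a sequence in the core $\sE$, push the resolvent commutator identity $T(S - i\mu)^{-1} = (S - i\mu)^{-1}(T + X_\mu)$ through the limit, and invoke closedness. The only cosmetic difference is that the paper also tracks the convergence of $ST(S - i\mu)^{-1}\xi_n$ and appeals to closedness of $S$, whereas you obtain $T(S - i\mu)^{-1}\xi \in \dom(S)$ directly from the limit identity, which exhibits it as an element of the range of $(S - i\mu)^{-1}$.
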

\begin{proof}
Let us fix some vector $\xi \in \dom (T)$ and some number $\mu \in \R\sm
\{0\}$. It then suffices to prove the inclusions
\begin{equation}\label{eq:dominc}
\arr{ccc}{
(S - i \cd \mu)^{-1}(\xi) \in \dom(T) 
& \T{and} & T(S-i \cd \mu)^{-1}(\xi) \in \dom(S).
}
\end{equation}
To this end we let $(\xi_n)\subset \sE$ be some sequence such that
\[
\xi_n \to \xi, \quad \T{and} \quad T\xi_n \to T\xi 
\]
in the norm topology of $E$. It then follows that the sequences
\[
\begin{split}
T(S - i \cd \mu)^{-1}\xi_n & =  
(S - i \cd \mu)^{-1} T\xi_n + (S - i \cd \mu)^{-1} X_\mu\xi_n 
 \T{ and} \\
ST(S - i \cd \mu)^{-1}\xi_n & = 
S (S-i\cd \mu)^{-1} T\xi_n + 
S (S-i \cd \mu)^{-1} X_\mu \xi_n
\end{split}
\]
converge in $E$. But this proves the validity of the inclusions in
\eqref{eq:dominc} since our unbounded operators are closed.
\end{proof}

For later use we state and prove the following:

\begin{lemma}\label{strzero}
The sequence of adjointable operators
\[
R_n := 
\frac{i}{n} \cd \big( \frac{i}{n} S + 1 \big)^{-1} \cd [S,T] 
\cd \big( \frac{i}{n} S + 1\big)^{-1} \in \B(E)
\]
as well as $(R_n^*)_n$ converge strongly to the $0$-operator.
\end{lemma}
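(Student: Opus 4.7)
The plan is to factor $R_n$ as a product whose last factor vanishes strongly, so that Lemma \ref{l:StrongConv} applies directly. First I would set $a_n := \frac{i}{n}S + 1 = \frac{i}{n}(S-in)$, noting that $\frac{i}{n}a_n^{-1} = (S-in)^{-1}$ and $a_n^{-1} = -in(S-in)^{-1}$. For $\xi \in \sE$, Assumption \ref{sumassu}(2) with $\mu = n$ gives $(S-in)^{-1}\xi \in \dom([S,T])$, whence
\[
R_n \xi \;=\; (S-in)^{-1}\,[S,T]\, a_n^{-1}\xi \;=\; -in\,(S-in)^{-1} X_n \xi \;=\; a_n^{-1} X_n \xi,
\]
with $X_n$ the bounded extension supplied by Assumption \ref{sumassu}(3). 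Since both sides define bounded operators agreeing on the dense submodule $\sE$, this yields $R_n = a_n^{-1}X_n$ on all of $E$.

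The decisive step will be a resolvent-identity manipulation. Starting from $(S-in)^{-1} = (S-i)^{-1} + i(n-1)(S-i)^{-1}(S-in)^{-1}$, applying $[S,T]$ on $\sE$, and propagating the resulting identity to $E$ by closability of $[S,T]$ (which follows from $[S,T]^*\supset -[S,T]$) together with density of $\sE$, I expect to identify
\[
X_n \;=\; X_1 \cdot g_n(S), \qquad g_n(s) := \frac{s-i}{s-in} \in C_\infty(\R).
\]
A direct computation gives $|g_n(s)|^2 = (s^2+1)/(s^2+n^2) \leq 1$ for $n \geq 1$ and $|g_n(s)| \leq \sqrt{R^2+1}/n$ on any compact set $|s|\leq R$. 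Hence the hypotheses of Lemma \ref{l:StrongConv} are satisfied and $g_n(S) \to 0$ strongly on $E$.

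Combining the two steps, $R_n = a_n^{-1} X_1 g_n(S)$, and since $\|a_n^{-1}\| \leq 1$ by functional calculus, for every $\xi \in E$ one has
\[
\|R_n\xi\| \;\leq\; \|X_1\|\cdot \|g_n(S)\xi\| \;\longrightarrow\; 0,
\]
establishing strong convergence $R_n \to 0$. The adjoint sequence will be handled by the symmetric argument with $-n$ in place of $n$: the analogue of the second step yields $R_n^* = -(a_n^*)^{-1} X_{-1}\, \tilde g_n(S)$ with $\tilde g_n(s) = (s+i)/(s+in)$, which enjoys the same boundedness and compact-convergence properties, so $R_n^* \to 0$ strongly as well. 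The hard part will be the propagation of the identity $X_n = X_1 g_n(S)$ from $\sE$ to all of $E$: although both sides are bounded operators, the literal formula $[S,T](S-in)^{-1}$ is valid only on $\sE$, so the resolvent identity pushes the argument outside $\sE$, and only the closability of $[S,T]$ combined with the density of $\sE$ matches the bounded extensions cleanly.
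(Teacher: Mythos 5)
Your proof is correct and takes essentially the same route as the paper's: the paper factors $R_n = \big(\tfrac{i}{n}S+1\big)^{-1}X_{-1}\cdot (S+i)(S-i\cdot n)^{-1}$ and applies Lemma \ref{l:StrongConv} to the last factor, which is exactly your factorization $a_n^{-1}X_1\, g_n(S)$ up to the inessential choice of $\mu=\pm 1$ in $X_\mu$, and the adjoint is handled by the same symmetric factorization. Your explicit closability argument for identifying the literal composition $[S,T](S-i)^{-1}$ with the bounded extension $X_1$ on vectors outside $\sE$ is a detail the paper leaves implicit, but it does not change the approach.
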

\begin{proof}
For each $n \in \N$ we can rewrite $R_n$ as
\[
R_n = \big( \frac{i}{n} S + 1 \big)^{-1} \cd X_{-1} \cd (S + i) (S - i\cd n)^{-1}.
\]
Since the factors in this decomposition are adjointable it follows that $R_n$
is adjointable as well. By Lemma \plref{l:StrongConv} the sequence of adjointable operators
\[
\bigl((S + i) (S - i\cd n)^{-1}\bigr)_{n\in\N} \subset \B(E)
\]
converges strongly to zero as $n\to
\infty$. Furthermore the sequence of adjointable operators
\[
 \Bigl(\big( \frac{i}{n} S + 1 \big)^{-1} \cd X_{-1}\Bigr)_{n\in\N} \subset \B(E)
\]
is uniformly bounded in the operator norm. But these two observations prove
that $R_n\to 0$ strongly. 

The same line of argument applies to the adjoint $R_n^*$ and the Lemma is
proved.
\end{proof}

The $C^*$-algebra estimates of the next two lemmas will be important for
proving that the sum operator $D$ is closed.

\begin{lemma}\label{commest}
There exists a constant $C > 0$ such that we have the following inequality
between selfadjoint elements of the $C^*$--algebra $\sA$
\[
\pm i \cd \binn{[S,T]\xi,\xi} \leq \frac{1}{2}\inn{S\xi,S \xi} + C \inn{\xi,\xi}
\]
for all $\xi \in \dom\big( [S,T] \big)$.
\end{lemma}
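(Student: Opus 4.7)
The plan is to exploit Assumption \plref{sumassu}(3) to replace the unbounded object $[S,T]$ by the bounded adjointable operator $X_{-1}$ and then run a Cauchy--Schwarz type argument in the $C^*$-algebra. First I would invoke Proposition \plref{coret} so that $X_{-1}$ extends $[S,T](S+i)^{-1}$ on the full domain $\dom(T)$. A small piece of domain bookkeeping then shows that for $\xi \in \dom([S,T])$ one has $(S+i)\xi = S\xi + i\xi \in \dom(T)$: indeed $\xi \in \dom([S,T])\subset \dom(TS)\cap\dom(T)$ gives both $S\xi\in \dom(T)$ and $\xi\in\dom(T)$. Consequently the basic identity
\[
   [S,T]\xi \;=\; X_{-1}(S+i)\xi \;=\; X_{-1} S\xi + i\, X_{-1}\xi
\]
holds on all of $\dom([S,T])$.

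Next I would plug this into $i\inn{[S,T]\xi,\xi}$ and, using conjugate linearity in the first slot, rewrite it as
\[
    i\inn{[S,T]\xi,\xi} \;=\; i\inn{X_{-1}S\xi,\xi} + \inn{X_{-1}\xi,\xi}.
\]
The expression on the left is selfadjoint (a routine check uses $\xi\in\dom(ST)\cap\dom(TS)$ together with the selfadjointness of $S$ and $T$ to verify that its adjoint equals itself), so I may symmetrise and move $X_{-1}$ to the other slot to obtain
\[
   i\inn{[S,T]\xi,\xi} \;=\; \frac{i}{2}\bigl[\inn{S\xi,X_{-1}^*\xi}-\inn{X_{-1}^*\xi,S\xi}\bigr] + \frac{1}{2}\inn{\xi,(X_{-1}+X_{-1}^*)\xi}.
\]

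Now I would estimate the two pieces separately. Expanding the positive element $\inn{S\xi \mp i X_{-1}^*\xi,\; S\xi\mp i X_{-1}^*\xi}\ge 0$ and using $\inn{X_{-1}^*\xi,X_{-1}^*\xi}\le \|X_{-1}\|^2\inn{\xi,\xi}$ yields
\[
    \pm\, i\bigl[\inn{S\xi,X_{-1}^*\xi}-\inn{X_{-1}^*\xi,S\xi}\bigr] \;\le\; \inn{S\xi,S\xi}+\|X_{-1}\|^2\inn{\xi,\xi}.
\]
The second piece, $\inn{\xi,(X_{-1}+X_{-1}^*)\xi}$, is selfadjoint and bounded by $\|X_{-1}+X_{-1}^*\|\,\inn{\xi,\xi}\le 2\|X_{-1}\|\,\inn{\xi,\xi}$ in both directions. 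Combining and absorbing the factor $\tfrac12$ gives the desired inequality with $C:=\tfrac12\|X_{-1}\|^2+\|X_{-1}\|$.

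The only step that is more than a bookkeeping exercise is the first one: ensuring that the identity $[S,T]\xi=X_{-1}(S+i)\xi$ is legal on the full domain $\dom([S,T])$, which is exactly where Proposition \plref{coret} is essential. Once this is in place, everything reduces to standard Cauchy--Schwarz estimates in the Hilbert $C^*$-module, and the $\tfrac12$ prefactor of $\inn{S\xi,S\xi}$ demanded by the statement drops out automatically.
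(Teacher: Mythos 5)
Your proof is correct and uses the same ingredients as the paper's: the identity $[S,T]\xi = X_{-1}(S+i)\xi$ on $\dom([S,T])$ (with the domain bookkeeping via Proposition \ref{coret}), the skewadjointness of $\inn{[S,T]\xi,\xi}$, and the elementary inequality $\inn{x,y}+\inn{y,x}\le\inn{x,x}+\inn{y,y}$ from \eqref{eq:149}. The only (cosmetic) difference is that the paper keeps $[S,T]\xi$ intact, applies that inequality to the scaled pair $\bigl(i\mu [S,T]\xi,\ \mu^{-1}\xi\bigr)$ and optimizes over $\mu$ at the end, whereas you first expand $[S,T]\xi=X_{-1}S\xi+iX_{-1}\xi$ and estimate the two resulting terms separately using $X_{-1}^*$; both routes yield the coefficient $\tfrac12$ on $\inn{S\xi,S\xi}$ and an explicit admissible constant $C$.
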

\begin{proof}
 The selfadjointness of $S,T$ and Assumption \plref{sumassu} imply that
 $\binn{[S,T]\xi,\xi}$ is skewadjoint for $\xi\in\dom([S,T])$. Furthermore, the inequalities
\[
\begin{split}
\pm 2 i \cd \binn{[S,T]\xi,\xi}
& = \mp \binn{i [S,T] \mu \xi, \mu^{-1} \xi} \mp \binn{\mu^{-1}\xi,i [S,T]\mu \xi}
\\
& \leq \mu^2\, \binn{[S,T]\xi,[S,T] \xi} + \mu^{-2}\inn{\xi,\xi} \\
& \leq \mu^2\, \big\| [S,T](S+i)^{-1}\big\|^2 \cd 
\binn{(S+i)\xi,(S+i)\xi} + \mu^{-2} \inn{\xi,\xi} \\
& \leq \mu^2 \|X_{-1}\|^2 \inn{S\xi,S\xi} + 
\big( \mu^2 \|X_{-1}\|^2 + \mu^{-2} \big) \cd \inn{\xi,\xi}
\end{split}
\]
are valid in the $C^*$-algebra $\sA$ for any $\mu \in \R\sm \{0\}$. 
Here we have used again the inequality \eqref{eq:149}.
Letting $\mu = \frac{1}{\|X_{-1}\|}$ now proves the claim.
\end{proof}

\begin{lemma}\label{DSTest}
There exists a constant $C > 0$ such that
\[
\binn{(S \pm i\,T)\xi,(S \pm i\, T)\xi} 
\geq \frac{1}{2}\inn{S\xi,S\xi} + \inn{T\xi,T\xi} - C \inn{\xi,\xi}
\]
for all $\xi \in \dom(S) \cap \dom(T)$.
\end{lemma}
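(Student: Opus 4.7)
The plan is to first establish, for $\xi\in\dom\bigl([S,T]\bigr)$, the algebraic identity
$$
\binn{(S\pm iT)\xi,(S\pm iT)\xi}=\inn{S\xi,S\xi}+\inn{T\xi,T\xi}\mp i\binn{[S,T]\xi,\xi},
$$
deduce the claimed inequality on this (possibly smaller) subdomain directly from Lemma \plref{commest}, and then promote it to all of $\dom(S)\cap\dom(T)$ by an $S$--resolvent approximation argument. The identity itself is obtained by expanding $(S\pm iT)\xi=S\xi\pm iT\xi$ and using the convention that $\inn{\cdot,\cdot}$ is conjugate linear in the first variable; the diagonal part gives $\inn{S\xi,S\xi}+\inn{T\xi,T\xi}$ while the cross term is $\pm i\bigl(\inn{S\xi,T\xi}-\inn{T\xi,S\xi}\bigr)$. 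The hypothesis $\xi\in\dom([S,T])$ means $T\xi\in\dom(S)$ and $S\xi\in\dom(T)$, so selfadjointness of $S$ and $T$ converts the cross term into $\pm i\inn{\xi,[S,T]\xi}=\mp i\binn{[S,T]\xi,\xi}$, the last equality because $\binn{[S,T]\xi,\xi}$ is skew-adjoint (as noted in the proof of Lemma \plref{commest}). Inserting Lemma \plref{commest} yields the inequality for $\xi\in\dom([S,T])$.

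To extend the inequality to general $\xi\in\dom(S)\cap\dom(T)$, I would employ the Yosida--type regularisations
$$
\xi_n:=(-in)(S-in)^{-1}\xi=f_n(S)\xi,\qquad f_n(t):=\tfrac{-in}{t-in},
$$
observing that $\|f_n\|_\infty\le 1$ and $f_n\to 1$ uniformly on compacta. By Proposition \plref{coret} Assumption \plref{sumassu} holds with $\sE=\dom(T)$, so that $(S-in)^{-1}\xi\in\dom(T)$ and $T(S-in)^{-1}\xi\in\dom(S)$; combined with the identity $S(S-in)^{-1}=I+in(S-in)^{-1}$ and the fact that both $\xi$ and $(S-in)^{-1}\xi$ lie in $\dom(T)$, this places $\xi_n$ inside $\dom([S,T])$. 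Lemma \plref{l:StrongConv} gives $\xi_n\to\xi$, and since $S$ commutes with its own resolvent on $\dom(S)$ we also get $S\xi_n=f_n(S)S\xi\to S\xi$.

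The crucial convergence $T\xi_n\to T\xi$ is obtained from the resolvent--commutator identity
$$
T(S-in)^{-1}\eta=(S-in)^{-1}T\eta+(S-in)^{-1}[S,T](S-in)^{-1}\eta,\qquad \eta\in\dom(T),
$$
which one verifies by applying $(S-in)$ on the left, all terms being well defined by the previous step. Multiplying by $-in$ rewrites $T\xi_n=f_n(S)T\xi+R_n\xi$, where
$$R_n=(-in)(S-in)^{-1}[S,T](S-in)^{-1}$$
is precisely the operator of Lemma \plref{strzero} once one uses the identification $(i/n)(iS/n+1)^{-1}=(S-in)^{-1}$. Lemma \plref{l:StrongConv} then gives $f_n(S)T\xi\to T\xi$, and Lemma \plref{strzero} gives $R_n\xi\to 0$, so $T\xi_n\to T\xi$. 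With $\xi_n\in\dom([S,T])$ and the three convergences $\xi_n\to\xi$, $S\xi_n\to S\xi$, $T\xi_n\to T\xi$ in $E$ at hand, continuity of the $\sA$--valued inner product and closedness of the positive cone $\sA_+$ permit passage to the limit in the inequality of the first paragraph applied to $\xi_n$, which is the desired estimate.

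The main obstacle is justifying $T\xi_n\to T\xi$: the operator $T$ does not a priori commute with the $S$--resolvent, and controlling the error requires both the resolvent--commutator identity above and the strong-null behaviour of the compound expression $R_n$, which is precisely the content of Lemma \plref{strzero}.
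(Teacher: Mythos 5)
Your proposal is correct and follows essentially the same route as the paper: prove the identity $\binn{(S\pm iT)\xi,(S\pm iT)\xi}=\inn{S\xi,S\xi}+\inn{T\xi,T\xi}\mp i\binn{[S,T]\xi,\xi}$ on $\dom([S,T])$, apply Lemma \plref{commest}, and then extend to $\dom(S)\cap\dom(T)$ via the regularisation $\xi_n=(\tfrac{i}{n}S+1)^{-1}\xi=-in(S-in)^{-1}\xi$ together with Lemmas \plref{l:StrongConv} and \plref{strzero}. Your identification of $R_n$ and the extra detail justifying $\xi_n\in\dom([S,T])$ and the resolvent--commutator identity are accurate refinements of what the paper leaves implicit.
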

\begin{proof}
By an application of Lemma \ref{commest} we get that
\begin{equation}\label{eq:estdense}
\begin{split}
\binn{(S \pm i\, T)\xi,(S \pm i\, T)\xi} 
& = \inn{S\xi,S\xi} + \inn{T\xi,T\xi}  \mp i \inn{[S,T]\xi,\xi} \\
& \geq \frac{1}{2}\inn{S\xi,S\xi} + \inn{T\xi,T\xi} - C \inn{\xi,\xi}
\end{split}
\end{equation}
for all $\xi \in \dom\big( [S,T]\big)$. This proves the desired inequality on
the submodule $\dom \big( [S,T] \big) \su \dom(S) \cap \dom(T)$.

Now, let $\xi \in \dom(S) \cap \dom(T)$. We will then look at the elements
\[
\xi_n := \big( \frac{i}{n} S  + 1 \big)^{-1}\xi \in \dom\big( [S,T] \big)
\]
in the domain of $[S,T]$. It follows from Lemma \plref{l:StrongConv}
that $\xi_n \to \xi$ and $S\xi_n \to S\xi$  in $E$ as $n\to\infty$.
On the other hand, we have the identities
\[
\begin{split}
T\xi_n 
& = \big( \frac{i}{n} S  + 1 \big)^{-1} T\xi
+  \frac{i}{n} \cd \big( \frac{i}{n} S  + 1 \big)^{-1} [S,T] \big( \frac{i}{n}
S + 1 \big)^{-1}\xi \\
& = \big( \frac{i}{n} S  + 1 \big)^{-1} T\xi + R_n\xi.
\end{split}
\]
It therefore follows from Lemma \ref{strzero} that 
$T\xi_n \to T\xi$ as $n\to\infty$ in the norm of $E$. 
The Lemma now follows by applying the inequality \eqref{eq:estdense}
to $\xi_n$ and taking limits.
\end{proof}

We are now ready to prove the main result of this subsection.

\begin{prop}\label{p:STAdjoint}
Assume that the conditions in Assumption \plref{sumassu} are satisfied. 
The operators $S\pm i\, T$ with domain $\dom(S\pm i\, T) = \dom(S) \cap \dom(T)$
are closed operators and adjoints of each other, i.e. $\bigl( S \pm i\, T
\bigr)^*=\bigl( S \mp i\, T\bigr)$. In other words, the sum operator
\[
D = \begin{pmatrix}
0 & S - i\, T \\
S + i\, T & 0
\end{pmatrix} 
\] 
with domain
\[
\dom(D) = \bigl( \dom(S) \cap \dom(T) \bigr)^2
\]
is selfadjoint.
\end{prop}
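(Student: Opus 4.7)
The plan is to show that the operators $S \pm i\, T$ on $\dom(S) \cap \dom(T)$ are closed and are mutually adjoint; this is equivalent to the selfadjointness of $D$. Closedness is the first order of business. The $\sA$-positive inequality in Lemma \ref{DSTest}, combined with the monotonicity of the norm on $\sA_+$ (i.e.\ $c \geq b \geq 0$ in $\sA_+$ forces $\|c\| \geq \|b\|$), yields the two scalar estimates
\begin{equation*}
\|(S \pm i\, T)\xi\|^2 + C\|\xi\|^2 \;\geq\; \frac{1}{2}\|S\xi\|^2
\quad \text{and} \quad
\|(S \pm i\, T)\xi\|^2 + C\|\xi\|^2 \;\geq\; \|T\xi\|^2.
\end{equation*}
Applied to differences $\xi_n - \xi_m$ these force $(S\xi_n)$ and $(T\xi_n)$ to be Cauchy whenever $(\xi_n)$ and $((S \pm i\, T)\xi_n)$ are, and closedness of $S$ and $T$ finishes the argument. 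The easy inclusion $S \mp i\, T \subseteq (S \pm i\, T)^*$ is a direct inner-product calculation exploiting the selfadjointness of $S$ and $T$ together with the conjugate linearity of $\inn{\cdot,\cdot}$ in the first slot.

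The heart of the proof is the reverse inclusion $(S + i\, T)^* \subseteq S - i\, T$; the dual statement follows by replacing $T$ with $-T$. For $\xi \in \dom((S + i\, T)^*)$ with $\eta := (S + i\, T)^*\xi$, I will introduce the smoothings
\begin{equation*}
\xi_n := (1 - iS/n)^{-1} \xi = in(S + in)^{-1} \xi, \qquad
\eta_n := (1 - iS/n)^{-1} \eta,
\end{equation*}
which automatically lie in $\dom(S)$. The key step is to show $\xi_n \in \dom(T)$. I will test the adjoint identity $\inn{(S + i\, T)\zeta, \xi} = \inn{\zeta, \eta}$ against vectors $\zeta := (S - in)^{-1}\phi$ with $\phi \in \dom(T)$; by Proposition \ref{coret} each such $\zeta$ lies in $\dom(S) \cap \dom(T)$ and admits the commutator expansion $T(S - in)^{-1}\phi = (S - in)^{-1} T\phi + (S - in)^{-1} X_n \phi$. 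After rearrangement (with careful accounting of signs from conjugate linearity) the identity takes the form $\inn{T\phi, (S + in)^{-1}\xi} = \inn{\phi, \psi_n}$ for all $\phi \in \dom(T)$ and some explicit $\psi_n$; selfadjointness of $T$ then forces $(S + in)^{-1}\xi \in \dom(T)$ and yields a formula for $T(S + in)^{-1}\xi$. Multiplying by $in$ places $\xi_n \in \dom(S) \cap \dom(T)$ and, after the central algebraic cancellation, produces
\begin{equation*}
(S - i\, T) \xi_n \;=\; \eta_n + i\, X_n^* \xi_n.
\end{equation*}

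Passage to the limit is then routine: $\xi_n \to \xi$ and $\eta_n \to \eta$ by Lemma \ref{l:StrongConv}, while $X_n^* \xi_n \to 0$ because the $X_n^*$ are uniformly bounded in norm (via the factorization $X_n = X_{-1}(S + i)(S - in)^{-1}$ together with $\|(S + i)(S - in)^{-1}\| \leq 3$ for $n \geq 1$) and converge strongly to zero on the dense subspace $\dom([S,T])$ by $\|(S + in)^{-1}\| \leq 1/n$ and Eq.\ \eqref{eq:XmuAdjoint}. The closedness of $S - i\, T$ established above now yields $\xi \in \dom(S) \cap \dom(T)$ with $(S - i\, T)\xi = \eta$, giving $(S + i\, T)^* = S - i\, T$; the dual identity follows analogously, and $D$ is selfadjoint.

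The main obstacle lies in the second paragraph. The bare datum $\xi \in \dom((S + i\, T)^*)$ places $\xi_n$ only in $\dom(S)$, and upgrading this to $\xi_n \in \dom(T)$ requires the pairing sleight of hand above. Equally delicate is the identification of the \emph{correct} combination to form: convergence of the apparently more natural $(S + i\, T)\xi_n$ would demand convergence of $(S\xi_n)$, i.e.\ precisely the conclusion $\xi \in \dom(S)$ that we are trying to establish; only the opposite-sign combination $(S - i\, T)\xi_n$ exhibits the algebraic cancellation that annihilates the unbounded $S\xi_n$ contribution. The commutator hypothesis in Assumption \ref{sumassu}(3) is exactly what makes this asymmetric cancellation possible.
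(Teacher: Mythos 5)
Your proposal is correct and follows essentially the same route as the paper: closedness via the $C^*$--inequality of Lemma \ref{DSTest}, the easy inclusion by direct computation, and the hard inclusion $\dom\bigl((S+i\,T)^*\bigr)\subset\dom(S)\cap\dom(T)$ by regularizing with $\bigl(1-\tfrac{i}{n}S\bigr)^{-1}$, using the commutator expansion and the selfadjointness of $T$ to place $\xi_n$ in $\dom(T)$, and exploiting the cancellation of the $S\xi_n$ terms in $(S-i\,T)\xi_n$. Your remainder term $i\,X_n^*\xi_n$ coincides with the paper's $i\,R_n^*\xi$, and your pairing against $(S-in)^{-1}\phi$ is just the paper's computation \eqref{eq:tadj} read in the opposite direction.
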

\begin{proof} 
The inequality stated in Lemma \ref{DSTest} shows that
the convergence of a sequence in the graph norm of $S \pm i\, T$ implies 
the convergence of the sequence in the graph norm of $S$ \emph{and}
in the graph norm of $T$ individually. Since the unbounded
operators $S$ and $T$ are closed this proves that $S\pm i\, T$ are closed, too.

To show that $S\pm i\, T$ are adjoints of each other we first note
that for each pair of elements $\xi,\eta \in \dom(S) \cap \dom(T)$ 
we have the identity
\[
\binn{(S + i\, T) \xi ,\eta} = \binn{\xi,(S- i\, T) \eta }.
\]
We therefore only need to prove the inclusion of domains
\begin{equation}\label{eq:150}  
\dom\big((S + i\, T)^* \big) \su \dom(S) \cap \dom(T).
\end{equation}
To this end we let $\xi \in \dom\big((S + i\, T)^* \big)$ be a vector in the
domain of the adjoint. We then define the sequence $(\xi_n)$ by
\[
\xi_n := \big( -\frac{i}{n}S + 1\big)^{-1} \xi  \in \dom(S)
\]
which converges to $\xi$ in the norm of $E$. We shall prove that $\xi_n \in
\dom(S) \cap \dom(T)$ for all $n \in \nn$ and that 
$(S+i\, T)^*\xi_n = (S - i\, T)\xi_n$ converges as $n\to\infty$ in the norm of $E$. This will
prove the inclusion \eqref{eq:150} since $S - i\, T$ is already proved to be closed on $\dom(S) \cap \dom(T)$.

We start by proving that $\xi_n \in \dom(S) \cap \dom(T)$. 
Let $\eta \in \dom(T)$. We can then calculate as follows, using Assumption
\plref{sumassu} and Proposition \plref{coret}
\begin{equation}\label{eq:tadj}
\begin{split}
 \binn{\xi_n&, T\eta} =\binn{\big(-\frac{i}{n}S + 1\big)^{-1} \xi , T\eta } \\
   & = \binn{\xi, \big( \frac{i}{n}S + 1\big)^{-1}T \eta } \\
   & = \binn{\xi, T \big( \frac{i}{n}S + 1\big)^{-1}\eta}
      -\binn{\xi, \frac{i}{n}\big( \frac{i}{n}S + 1\big)^{-1}[S,T] 
           \big( \frac{i}{n}S + 1\big)^{-1} \eta } \\
   & = -i \binn{\xi , (S + i\, T) \big( \frac{i}{n}S + 1\big)^{-1} \eta }
       +i \binn{\xi , S \big( \frac{i}{n}S + 1\big)^{-1} \eta }
             - \binn{R_n^* \xi , \eta} \\
   & = -i \binn{\big( -\frac{i}{n}S + 1\big)^{-1}(S+i\, T)^* \xi ,\eta}
       +i \binn{S \xi_n,\eta} - \binn{R_n^*\xi, \eta}.
\end{split}
\end{equation}
Since $T$ is selfadjoint this proves that
\[
\xi_n= \big( -\frac{i}{n}S + 1\big)^{-1} \xi \in \dom(T)
\]
and 
\begin{equation}\label{eq:151}  
   T\xi_n =   i \big( -\frac{i}{n}S + 1\big)^{-1}(S+i\, T)^* \xi -i\, S\xi_n  - R_n^*\xi.
\end{equation}
We end the proof of the inclusion \eqref{eq:150} by showing that
$(S + i\, T)^* \xi_n = (S - i\, T)\xi_n$
converges in the norm of $E$.
From \eqref{eq:151} we infer, since $\xi_n\in\dom(S)\cap\dom(T)$,
\[
(S+i\, T)^*\xi_n = (S-i\, T)\xi_n= \big( -\frac{i}{n}S + 1\big)^{-1}(S+i\, T)^* \xi
                        + i (R_n)^* \xi .
\]
The convergence of the sequence $\{(S+ i\, T)^* \xi_n \}$ now follows from Lemma
\ref{strzero} and $\xi\in\dom(S)\cap\dom(T)$ is proved.

The last claim about $D$ is now clear.
\end{proof}
\subsection{Regularity}  
We still assume that the conditions in Assumption \ref{sumassu} are
satisfied. It is now our goal to improve Proposition \ref{p:STAdjoint} by showing
that the operators $S\pm i\, T$ and hence the sum operator $D$ are also regular. This
will turn out to be an easy consequence of the Local--Global Principle and the
results in Section \ref{ss:selfadj}. We start by noting that the localizations
of $T$ and $S$ satisfy the conditions in Assumption \ref{sumassu}. Remark that
these localizations are selfadjoint by Theorem \ref{t:locglob}.

\begin{lemma}\label{localsas}
Let $\om \in S(\sA)$ be a state on the $C^*$--algebra $\sA$. Then the
localizations with respect to $\om$ of the selfadjoint regular operators $T$
and $S$ satisfy the conditions in Assumption \plref{sumassu}. The desired core
$\sE^\om \su \dom(T^\om)$ is the subspace $\sE^\om := \iom\big(\dom(T)\big)$.
\end{lemma}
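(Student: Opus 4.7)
The plan is to transport Assumption \plref{sumassu} from $S,T,\dom(T)$ to $S^\om, T^\om, \iom(\dom(T))$ via the map $\iom$. The basic observation that makes everything work is that the natural quotient map $\B(E)\to \B(E^\om)$, $R\mapsto R^\om$, is a $*$-homomorphism intertwining the bounded transform/continuous functional calculus; in particular for selfadjoint regular $S$ and $f\in C_\infty(\R)$ one has $f(S)^\om = f(S^\om)$, and specializing to $f(t)=(t-i\mu)^{-1}$ yields the key intertwining identity
\[
(S^\om - i\mu)^{-1}\iom(\xi) = \iom\bigl((S-i\mu)^{-1}\xi\bigr), \qquad \xi\in E.
\]
By Theorem \plref{t:locglob} $S^\om, T^\om$ are selfadjoint, hence automatically regular on the Hilbert space $E^\om$.

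Condition (1) is immediate since $T^\om$ was defined as the closure of $T_0^\om$ on precisely $\iom(\dom(T))$. For condition (2), let $\eta=\iom(\xi)$ with $\xi\in\dom(T)$; by Proposition \plref{coret} we may apply Assumption \plref{sumassu} to $\xi$ itself. The intertwining identity above gives $(S^\om-i\mu)^{-1}\iom(\xi) = \iom\bigl((S-i\mu)^{-1}\xi\bigr)$, and since $(S-i\mu)^{-1}\xi\in \dom(S)\cap\dom(T)$ we get that this lies in $\iom(\dom(S))\cap \iom(\dom(T)) \subset \dom(S^\om)\cap\dom(T^\om)$. Then
\[
T^\om (S^\om - i\mu)^{-1}\iom(\xi) = \iom\bigl(T(S-i\mu)^{-1}\xi\bigr)\in\iom(\dom(S))\subset\dom(S^\om),
\]
which verifies condition (2).

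For condition (3), compute on the dense subspace $\iom(\dom(T))$: for $\xi\in\dom(T)$,
\[
[S^\om, T^\om](S^\om-i\mu)^{-1}\iom(\xi) = \iom\bigl(ST(S-i\mu)^{-1}\xi\bigr) - \iom\bigl(TS(S-i\mu)^{-1}\xi\bigr) = \iom(X_\mu \xi),
\]
using the intertwining identity together with the inclusions from condition (2). Now $X_\mu\in\B(E)$ induces the bounded operator $X_\mu^\om\in\B(E^\om)$ satisfying $X_\mu^\om \iom(\xi)=\iom(X_\mu\xi)$, so the module homomorphism $[S^\om,T^\om](S^\om-i\mu)^{-1}:\iom(\dom(T))\to E^\om$ extends by density to the bounded operator $X_\mu^\om$, which verifies condition (3).

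The main thing to check carefully is the intertwining identity for the bounded resolvent, since everything else reduces to pushing known equalities through $\iom$. This identity is cleanest to see via the description of $E^\om$ as the interior tensor product $E\hot_\sA H_\om$ (Section \plref{ss:LHC}): by construction $f(S)^\om(x\ot h) = (f(S)x)\ot h$ for $f(S)\in\B(E)$, and one checks directly that $(t-i\mu)^{-1}(S^\om)$ has the same action on the dense subspace $E\ot_\sA H_\om$, so the two bounded operators agree.
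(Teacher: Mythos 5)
Your proof is correct and follows exactly the route the paper intends: the paper's own proof of Lemma \ref{localsas} consists of the single sentence that it is ``a straightforward consequence of the definitions and Proposition \ref{coret}'', and your argument simply fills in those details (the resolvent intertwining $(S^\om-i\mu)^{-1}\iom(\xi)=\iom((S-i\mu)^{-1}\xi)$, which is the same computation used in the proof of $(1)\Rightarrow(2)$ of Theorem \ref{t:locglob}, together with Proposition \ref{coret} and the adjointability of $X_\mu$). No gaps.
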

\begin{proof}
This is a straightforward consequence of the definitions and Proposition
\ref{coret}.
\end{proof}

In order to prove the regularity of the sum operator $D$ we study the
behavior of localization with respect to the sum operation.

\begin{lemma}\label{locandsum}
Let $\om \in S(\sA)$ be a state on the $C^*$--algebra $\sA$. We then have the
identity
\[
S^\om + i\, T^\om = (S + i\, T)^\om
\]
between localized operators.
\end{lemma}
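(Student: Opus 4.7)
The plan is to prove the two inclusions $(S+iT)^\om \subset S^\om + iT^\om$ and $S^\om + iT^\om \subset (S+iT)^\om$ separately. Note that by Lemma \plref{localsas} the localizations $S^\om, T^\om$ themselves satisfy Assumption \plref{sumassu}, so Proposition \plref{p:STAdjoint} applies to them: $S^\om + iT^\om$ is already \emph{closed} on $\dom(S^\om)\cap\dom(T^\om)$. On the other hand $(S+iT)^\om$ is by definition the closure of $(S+iT)^\om_0$ acting on the dense subspace $\iom\bigl(\dom(S)\cap\dom(T)\bigr)$ of $E^\om$.

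For the inclusion $(S+iT)^\om \subset S^\om + iT^\om$ I would observe that for $\xi \in \dom(S)\cap\dom(T)$ the vector $\iom(\xi)$ lies in $\dom(S^\om)\cap\dom(T^\om)$ and $(S^\om+iT^\om)\iom(\xi)=\iom(S\xi)+i\iom(T\xi)=(S+iT)^\om_0\iom(\xi)$; since $S^\om+iT^\om$ is closed, this passes to the closure. The reverse inclusion reduces to showing that $\iom\bigl(\dom(S)\cap\dom(T)\bigr)$ is a core for $S^\om+iT^\om$, i.e.\ every $\xi \in \dom(S^\om)\cap\dom(T^\om)$ can be approximated by such vectors in the graph norm of $S^\om+iT^\om$. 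Since by Lemma \plref{DSTest} (applied to the localizations) this graph norm is equivalent to the sum of the $S^\om$-- and $T^\om$--graph norms, I only have to produce a sequence $\xi_n \in \iom\bigl(\dom(S)\cap\dom(T)\bigr)$ with $\xi_n\to\xi$, $S^\om\xi_n \to S^\om\xi$ and $T^\om\xi_n \to T^\om\xi$.

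The construction of this sequence will be a double approximation/regularization argument. First, using that $\iom(\dom(T))$ is a core for $T^\om$ (Lemma \plref{localsas}), pick $\eta_k \in \dom(T)$ with $\iom(\eta_k)\to\xi$ in the $T^\om$--graph norm. Then apply the $S$--regularization: set
\[
\xi_m^{(k)} := \iom\bigl((\tfrac{i}{m}S+1)^{-1}\eta_k\bigr) = (\tfrac{i}{m}S^\om+1)^{-1}\iom(\eta_k),
\]
which lies in $\iom\bigl(\dom(S)\cap\dom(T)\bigr)$ by Assumption \plref{sumassu} combined with Proposition \plref{coret}. Using Lemma \plref{l:StrongConv} and the commutator identity from the proof of Lemma \plref{DSTest}, namely
\[
T^\om\xi_m^{(k)} = (\tfrac{i}{m}S^\om+1)^{-1}T^\om \iom(\eta_k) + R_m^\om \iom(\eta_k),
\]
together with $T^\om\iom(\eta_k)=\iom(T\eta_k)\to T^\om\xi$, I obtain $\xi_m^{(k)} \to \xi_m := (\tfrac{i}{m}S^\om+1)^{-1}\xi$ in the graph norm of $S^\om+iT^\om$ as $k\to\infty$. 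Finally, letting $m\to\infty$, Lemma \plref{l:StrongConv} and Lemma \plref{strzero} applied to the localized operators yield $\xi_m \to \xi$ in the same graph norm, and a diagonal sequence produces the desired approximation of $\xi$ by elements of $\iom\bigl(\dom(S)\cap\dom(T)\bigr)$. The main obstacle, and the reason the argument is not a one-line consequence of the closedness of $S^\om+iT^\om$, is precisely that a vector $\eta\in\dom(T)$ with $\iom(\eta)\in\dom(S^\om)$ need not itself lie in $\dom(S)$; the double regularization above circumvents this by composing the $T$--core property with a uniformly bounded $S$--resolvent smoothing that commutes correctly via Lemma \plref{strzero}.
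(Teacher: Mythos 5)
Your proposal is correct and follows essentially the same route as the paper: both inclusions are handled identically, and the reverse inclusion rests on the same ingredients (the $T^\om$--core property from Lemma \plref{localsas}, the resolvent regularization $(\tfrac{i}{m}S+1)^{-1}$ via Proposition \plref{coret}, and the commutator control from Lemmas \plref{strzero} and \plref{DSTest}). The only difference is organizational: you package the argument as a graph-norm core/diagonal-sequence statement, whereas the paper invokes the closedness of $(S+iT)^\om$ twice in succession --- these are equivalent.
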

\begin{proof} Recall that by definition $\dom(S^\go+ i
 T^\go)=\dom(S^\go)\cap\dom(T^\go)$. We start by noting that
\[
(S + i\, T)^\om \su S^\om + i\, T^\om.
\]
This inclusion is valid since $(S + i\, T)^\om_0 \su S^\om_0 + i\, T^\om_0$ and
since $S ^\om + i\, T^\om$ is closed by Proposition \ref{p:STAdjoint}. Thus we only
need to prove the inclusion
\begin{equation}\label{eq:sumincloc}
\dom\big( S^\om + i\, T^\om\big) \su \dom\big( (S + i\, T)^\om \big)
\end{equation}
of domains. 
In order to establish this we first prove that
\begin{equation}\label{eq:incloc}
(S^\om + i \mu)^{-1} \big( \dom(T^\om) \big) \su \dom\big( (S + i\, T)^\om\big)
\end{equation}
for all $\mu \in \R\sm \{0\}$. Let $\mu \in \R\sm \{0\}$ and let $\xi \in
\dom(T^\om)$. We can then find a sequence $(\eta_n)\subset \dom(T)$ such
that 
\[
\iom(\eta_n) \to \xi \quad \T{ and }\quad \iom\big(T \eta_n \big) \to T^\om(\xi).
\]
By Assumption \ref{sumassu} we get that
\[
(S^\om + i \mu)^{-1}\big( \iom(\eta_n) \big)
= \iom\big( (S + i \mu)^{-1} \eta_n  \big) \in \iom\big( \dom(S) \cap \dom(T)
\big) \su \dom\big(  (S + i\, T)^\om\big)
\]
and by continuity we have that
\[
(S^\om + i \mu)^{-1}\big( \iom(\eta_n) \big) \to (S^\om + i \mu)^{-1}\xi.
\]
We therefore only need to prove that the sequence
\[
(S + i\, T)^\om (S^\om + i \mu)^{-1}\big( \iom(\eta_n) \big)
= \iom\big( (S + i\, T)(S + i \mu)^{-1} \eta_n  \big)
\]
is convergent in the norm of $E^\om$. But this follows by the argument given
in the proof of Proposition \ref{coret}.

To finish the proof of the inclusion in \eqref{eq:sumincloc} we let
\[
\xi \in \dom(S^\om + i\, T^\om) = \dom(S^\om)\cap \dom(T^\om).
\]
We then define the sequence $(\xi_n)$ by 
\[
\xi_n := \big(\frac{i}{n}S^\om + 1 \big)^{-1} \xi.
\]
By the argument given in the proof of Lemma \ref{DSTest} we have that
\[
\xi_n \to \xi \quad \T{ and }\quad  (S^\om + i\, T^\om)\xi_n  \to (S^\om + i\, T^\om)\xi
\]
where the convergence takes place in the Hilbert space $E^\om$. But this
proves that $\xi \in \dom\big( (S + i\, T)^\om\big)$ since $\xi_n \in \dom\big( (S
+ i\, T)^\om\big)$ for all $n \in \nn$ by the inclusion in \eqref{eq:incloc}.
\end{proof}

We are now ready to prove the main result of this section.

\begin{theorem}\label{t:SAReg}
Assume that the conditions in Assumption \plref{sumassu} are satisfied. Then the
sum operator
\[
D = \begin{pmatrix}
0 & S - i\, T \\
S + i\, T & 0
\end{pmatrix}
\]
with domain $\big( \dom(S) \cap \dom(T) \big)^2$ is selfadjoint and regular.
\end{theorem}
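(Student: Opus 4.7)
The selfadjointness of $D$ is already contained in Proposition \ref{p:STAdjoint}, so the only remaining task is to establish regularity, and for this the natural tool is the Local--Global Principle (Theorem \ref{t:locglob}, part 2). Thus I plan to verify that for every state $\om\in S(\sA)$ the localization $D^\om$, viewed as a closed symmetric operator on the Hilbert space $(E\oplus E)^\om\cong E^\om\oplus E^\om$, is selfadjoint. Since selfadjoint operators on Hilbert space are automatically regular, this will conclude the argument.

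To identify $D^\om$ explicitly, I would first observe that taking the localization commutes with forming the $2\times 2$ matrix, so that
\[
D^\om=\begin{pmatrix} 0 & (S-i\,T)^\om\\ (S+i\,T)^\om & 0\end{pmatrix}
\]
on the natural domain $\bigl(\dom(S^\om)\cap\dom(T^\om)\bigr)^2$. The key identification $(S\pm i\,T)^\om=S^\om\pm i\,T^\om$ is supplied by Lemma \ref{locandsum} (the minus sign case follows by the same argument, or by replacing $T$ with $-T$, which does not affect Assumption \plref{sumassu}). Hence $D^\om$ is precisely the matrix constructed out of the pair $(S^\om,T^\om)$ in the same way $D$ was constructed out of $(S,T)$.

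Next, by Lemma \ref{localsas} the pair $(S^\om,T^\om)$ of selfadjoint (necessarily regular, being Hilbert space operators) operators again satisfies the conditions of Assumption \plref{sumassu}, with core $\iom\big(\dom(T)\big)\su\dom(T^\om)$. Consequently Proposition \ref{p:STAdjoint}, applied now to $S^\om$ and $T^\om$ on the Hilbert space $E^\om$, yields that $D^\om$ is selfadjoint. Since this holds for every state $\om$, the Local--Global Principle implies that $D$ is selfadjoint and regular, completing the proof.

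The substantive work has really been done in the preceding subsection and in Lemmas \ref{localsas} and \ref{locandsum}; the main ``obstacle'' was the verification that localization is compatible with the sum operation $S+i\,T$ (the nontrivial inclusion $\dom(S^\om+i\,T^\om)\subset\dom((S+i\,T)^\om)$ in Lemma \ref{locandsum}), together with the graph--norm estimates of Lemma \ref{DSTest} needed to control the regularizing sequence $\xi_n=(\tfrac{i}{n}S+1)^{-1}\xi$. Once those are in place, the passage from the selfadjointness of $D$ at the $C^*$--module level plus the selfadjointness of all Hilbert space localizations $D^\om$ to the regularity of $D$ is exactly what the Local--Global Principle is designed to deliver.
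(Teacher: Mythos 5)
Your proposal is correct and follows essentially the same route as the paper: selfadjointness of $D^\om$ via Lemma \ref{localsas} and Proposition \ref{p:STAdjoint} applied to the localized pair $(S^\om,T^\om)$, the identification $D^\om=(D^\om)'$ via Lemma \ref{locandsum}, and then the Local--Global Principle of Theorem \ref{t:locglob} to conclude regularity. No gaps.
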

\begin{proof}
Let $\om \in S(\sA)$ be a state on the $C^*$--algebra $\sA$. By the
Local--Global Principle proved in Theorem \ref{t:locglob} we only need to prove
that the localization $D^\om$ agrees with the selfadjoint operator
\[
(D^\om)' := \begin{pmatrix}
0 & S^\om - i\, T^\om \\
S^\om + i\, T^\om & 0
\end{pmatrix}, \quad \dom\big((D^\om)'\big) := \big( \dom(S^\om) \cap \dom(T^\om) \big)^2.
\]
Remark that the selfadjointness of $(D^\om)'$ is a consequence of Lemma
\ref{localsas} and Proposition \ref{p:STAdjoint}. However, by an application of
Lemma \ref{locandsum} we get the identities
\[
D^\om 
= \begin{pmatrix}
                    0 & (S - i\, T)^\om \\
      (S + i\, T)^\om & 0
  \end{pmatrix}
= \begin{pmatrix}
                    0 & S^\om - i\, T^\om \\
       S^\om + i\, T^\om & 0
  \end{pmatrix}
= (D^\om)'
\]
and the theorem is proved.
\end{proof}

\def\cprime{$'$}
\providecommand{\bysame}{\leavevmode\hbox to3em{\hrulefill}\thinspace}
\providecommand{\MR}{\relax\ifhmode\unskip\space\fi MR }
\providecommand{\MRhref}[2]{%
  \href{http://www.ams.org/mathscinet-getitem?mr=#1}{#2}
}
\providecommand{\href}[2]{#2}

\bibliographystyle{amsalpha-lmp.bst}

\end{document}